\newtheorem{thm}{Theorem}[section]
\newtheorem{cor}[thm]{Corollary}
\newtheorem{lem}[thm]{Lemma}
\newtheorem{prop}[thm]{Proposition}
\theoremstyle{definition}
\newtheorem{defn}[thm]{Definition}
\theoremstyle{remark}
\newtheorem{rem}[thm]{Remark}
\newtheorem{que}[thm]{Question}
\newtheorem{ex}[thm]{Example}
\numberwithin{equation}{section}
\newcommand{\Aut}{\textnormal{Aut}}
\newcommand{\sys}{\mathrm{sys}}
\newcommand{\lf}{\lfloor}
\newcommand{\rf}{\rfloor}
\newcommand{\cov}{\mathrm{cov}}
\newcommand{\g}{\mathfrak{g}}
\newcommand{\mk}{\mathfrak}
\begin{document}

\address{CNRS -- D\'epartement de Math\'ematiques, Universit\'e Paris-Sud, 91405 Orsay, France}

\email{yves.cornulier@math.u-psud.fr}
\thanks{This material is based upon work supported by the NSF under Grant No.\
DMS-1440140 while the author was in residence at the MSRI (Berkeley)
during the Fall 2016 semester. Supported by ANR Project ANR-14-CE25-0004 GAMME}

\subjclass[2010]{Primary 17B30; Secondary 20F18, 20F69, 20E07, 22E25, 22E40, 53C23, 53C30}


\title{Nilpotent Lie algebras and systolic growth of nilmanifolds}
\author{Yves Cornulier}%
\date{April 26, 2017}


\begin{abstract}
Introduced by Gromov in the nineties, the systolic growth of a Lie group gives the smallest possible covolume of a lattice with a given systole.

In a simply connected nilpotent Lie group, this function has polynomial growth, but can grow faster than the volume growth. We express this systolic growth function in terms of discrete cocompact subrings of the Lie algebra, making it more practical to estimate. 

After providing some general upper bounds, we develop methods to provide nontrivial lower bounds. We provide the first computations of the asymptotics of the systolic growth of nilpotent groups for which this is not equivalent to the volume growth. In particular, we provide an example for which the degree of growth is not an integer; it has dimension 7. Finally, we gather some open questions.
\end{abstract}
\maketitle

\section{Introduction}

\subsection{Background}

Every locally compact group $G$ has a Haar measure $\mu$, unique up to positive scalar multiplication. If in addition $G$ is generated by a symmetric compact neighborhood $S$ of 1, the function $b(n)=\mu(S^n)$ is called the volume growth (or word growth) of $G$ ; while its values depend on the choice of $(S,\mu)$, its asymptotics (in the usual meaning, recalled in \S\ref{s_asy}) does not. The volume growth is either exponential or subexponential. Those compactly generated locally compact group with polynomially bounded growth have been characterized by Guivarch and Jenkins \cite{Guiv,Jen} in the case of connected Lie groups, Gromov in the case of discrete groups \cite{Gro81}, and Losert \cite{Los} in general. All such groups are commable, and hence quasi-isometric, to simply connected nilpotent Lie groups, and thus, by work of Guivarch \cite{Guiv} have an integral degree of polynomial growth that is easily computable in terms of the Lie algebra structure (see \S\ref{pre_la}).

The object of study of the paper is the following related notion of growth, introduced by Gromov in \cite[p. 333]{Gro}.

\begin{defn}\label{d_sy}Let $H$ be a locally compact group and $|\cdot|$ the word length relative to some choice of compact generating subset. If $X\subset H$, define its systole as $\sys(X)=\inf\{|x|:x\in X\smallsetminus\{1\}\}\in\mathbf{R}_+\cup\{+\infty\}$.

Endow $H$ with some left-invariant Haar measure. The systolic growth of $(H,|\cdot|)$ is the function mapping $r\ge 0$ to the infimum $\sigma(r)\in R_+\cup\{+\infty\}$ of covolumes of cocompact lattices of $H$ with systole $\ge r$.
\end{defn}

See Remark \ref{riemannian} for the geometric interpretation in a Riemannian setting.
Note that the definition makes sense when $H$ is discrete, in which case lattices just refer to finite index subgroups: this is actually the setting in Gromov's original definition. In the setting we will focus on, $H$ will always be nilpotent and in this case all lattices are cocompact. In general, we can define another type of systolic growth, allowing non-cocompact lattices.

The asymptotics of the growth of $\sigma$ does not depend on the choice of the word length. The number $\sigma(r)$ is always bounded below by the volume of the open $r/2$-ball in $H$.

The function $\sigma$ is interesting only when it takes finite values, in which case we say that $H$ is residually systolic. When $H$ is discrete, this just means that $H$ is residually finite. In general, a sufficient condition for $H$ being residually systolic is that $H$ admits a residually finite cocompact lattice. 

It is natural to compare the volume growth and the systolic growth. For finitely generated linear groups of exponential growth, the systolic growth is exponential as well \cite{BC}.

\subsection{Background with focus in the nilpotent case}

Given a Lie algebra $\g$, denote by $(\g^i)_{i\ge 1}$ its lower central series (see \S\ref{pre_la}); by definition $\g$ is $c$-step nilpotent if $\g^{c+1}=\{0\}$. The homogeneous dimension of $\g$ is classically defined as the sum \[D=D(\g)=\sum_{i\ge 1}\dim(\g^i);\] we have $D<\infty$ if and only $\g$ is nilpotent and finite-dimensional.

A classical result of Malcev is that a simply connected nilpotent Lie group admits a lattice (which is then cocompact and residually finite) precisely when its Lie algebra can be obtained from a rational Lie algebra by extension of scalars. Therefore this is also equivalent to being residually systolic. In this case, the systolic growth is easy to bound polynomially; nevertheless the comparison between volume growth and the systolic growth is not obvious, because the precise rate of polynomial growth is an issue. 
 
A first step towards a good understanding is the following result (all asymptotic results are meant when $r\to +\infty$).

\begin{thm}[\cite{gralie}]\label{carnoy} Let $G$ be a simply connected nilpotent Lie group with a lattice $\Gamma$. Let $\g$ be the Lie algebra of $G$, and let $D$ be its homogeneous dimension (so, for both $G$ and $\Gamma$, the growth is $\simeq r\mapsto r^D$ and the systolic growth is $\succeq r^D$, see \S\ref{pre_la}). The following are equivalent:
\begin{enumerate}
\item\label{i_gd} the systolic growth of $G$ is $\simeq r^D$;
\item the systolic growth of $\Gamma$ is $\simeq r^D$;
\item\label{i_gc} $\g$ is Carnot, i.e., admits a Lie algebra grading $\g=\bigoplus_{i\ge 1}\g_i$ such that $\g_1$ generates $\g$.
\end{enumerate}
Otherwise, if $\sigma$ denotes the systolic growth of either $\Gamma$ or $G$, then it satisfies $\sigma(r)\gg r^D$.
\end{thm}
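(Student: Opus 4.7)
The plan is to handle the universal lower bound and the Carnot direction first, then attack the converse by a rescaling/compactness argument closed by an integer-rigidity step.

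Both $\sigma_G$ and $\sigma_\Gamma$ satisfy $\sigma(r)\succeq r^D$: if a lattice has systole $\geq r$ then the open $r/2$-balls around lattice points are disjoint, and each has volume $\simeq r^D$ by the Guivarch ball-volume asymptotics. For (iii)$\Rightarrow$(i),(ii), assume $\g=\bigoplus_i \g_i$ is Carnot, so the dilations $\delta_t|_{\g_i}=t^i\,\mathrm{id}$ are Lie algebra automorphisms. Choose a Mal'cev $\mathbb{Z}$-subring $L_0\subset\g$ adapted to the grading (available since $\g$ admits a rational form compatible with the Carnot structure); the corresponding lattice $\Gamma_0$ sits in $G$. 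Then for each positive integer $n$, $\delta_n(L_0)$ is a $\mathbb{Z}$-subring of index $n^D$ in $L_0$ whose Carnot--Carath\'eodory systole is $\asymp n$. This produces lattices in $G$ and finite-index sublattices in $\Gamma_0$ realizing $\sigma(n)\preceq n^D$, matching the lower bound.

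For (i) or (ii) $\Rightarrow$ (iii), I argue by contradiction. Suppose $\g$ is not Carnot but $\sigma(r_n)\leq Cr_n^D$ along some $r_n\to\infty$; take corresponding $\mathbb{Z}$-subrings $L_n\subset\g$ (via Mal'cev) of covolume $\leq Cr_n^D$ and systole $\geq r_n$. Rescale by setting $L_n':=\delta_{1/r_n}(L_n)$ and $[x,y]_r:=\delta_{1/r}[\delta_r x,\delta_r y]$. Each $L_n'$ is a subring under $[\cdot,\cdot]_{r_n}$, has Euclidean systole $\geq 1$ up to a fixed constant, and has Euclidean covolume $\leq C$. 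Writing the bracket of $\g$ as its graded part plus terms of strictly lower filtration weight shows $[\cdot,\cdot]_{r_n}\to[\cdot,\cdot]_{\mathrm{gr}}$ pointwise. Mahler compactness in the vector space $\g$ then yields a subsequence $L_n'\to L_\infty$, and passing to the limit in the subring relation shows $L_\infty$ is a $\mathbb{Z}$-subring of $\mathrm{gr}(\g)$.

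The decisive step is upgrading this to an isomorphism $\g\cong\mathrm{gr}(\g)$, contradicting non-Carnotness. For $n$ large, choose linear maps $\phi_n:\g\to\g$ with $\phi_n(L_\infty)=L_n'$ and $\phi_n\to\mathrm{id}$; then $L_\infty$ is closed under the pulled-back bracket $\phi_n^*[\cdot,\cdot]_{r_n}$, which defines a Lie algebra isomorphic to $\g$. In a fixed $\mathbb{Z}$-basis of $L_\infty$ these structure constants are integers, and they converge to the structure constants of $[\cdot,\cdot]_{\mathrm{gr}}$. A convergent integer sequence is eventually constant, so for $n$ large $\phi_n^*[\cdot,\cdot]_{r_n}=[\cdot,\cdot]_{\mathrm{gr}}$, giving $\g\cong\mathrm{gr}(\g)$.

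The main obstacle is this final rigidity step: one must arrange that the Hausdorff convergence $L_n'\to L_\infty$ is realised by linear maps $\phi_n\to\mathrm{id}$, and that the pulled-back brackets give integer structure constants on a fixed basis of $L_\infty$, so that integer rigidity applies. The supporting ingredients—Mal'cev correspondence between lattices in $G$ and $\mathbb{Z}$-subrings of $\g$, the pointwise convergence of rescaled brackets to the graded bracket, the ball-volume lower bound, and Mahler compactness—are more routine; the real content lies in combining them to force the Carnot isomorphism.
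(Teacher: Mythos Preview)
Your argument for the hard implication (\ref{i_gd})$\Rightarrow$(\ref{i_gc}) is essentially the same as the paper's (in \S\ref{s_small}): rescale the lattices by $u(r)^{-1}$ to get subrings $\Xi_r$ of $(\g,[\cdot,\cdot]_r)$ with systole $\ge 1$ and bounded covolume, pass to a Chabauty limit $\Xi_\infty$ which is a subring of $(\g,[\cdot,\cdot]_\infty)$, and then use discreteness/integrality of structure constants to force $\Xi_r\cong\Xi_\infty$ for large $r$, whence $\g\cong(\g,[\cdot,\cdot]_\infty)$. The paper phrases the last step slightly differently---it picks a $\mathbf{Z}$-basis $(u_i)$ of $\Xi_\infty$, approximates by $u_i^r\in\Xi_r$, and notes that $[u_i^r,u_j^r]_r-\sum_k n_{ij}^k u_k^r$ lies in $\Xi_r$ and tends to zero, hence vanishes---but this is exactly your $\phi_n$ in coordinates, so the two executions coincide.

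There is, however, a genuine gap in your treatment of (\ref{i_gc})$\Rightarrow$(ii). You construct a specific lattice $\Gamma_0$ coming from a rational form \emph{adapted to the Carnot grading}, and show $\sigma_{\Gamma_0}\preceq r^D$. But the statement concerns an \emph{arbitrary} given lattice $\Gamma$, whose associated rational form $\g_\mathbf{Q}$ need not, a priori, carry a Carnot grading. What is needed is the (nontrivial) fact that if $\g$ is Carnot over $\mathbf{R}$ then every rational form $\g_\mathbf{Q}$ is Carnot over $\mathbf{Q}$; this is proved in \cite{gralie} and is precisely why the paper cites the full theorem rather than reproving it. Without this, your argument only yields (\ref{i_gc})$\Rightarrow$(\ref{i_gd}), not (\ref{i_gc})$\Rightarrow$(ii), and the equivalence with (ii) is left incomplete.
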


In the non-Carnot case, the proof of Theorem \ref{carnoy} does not provide any explicit asymptotic lower bound improving $\sigma(r)\gg r^D$. In this paper, we carry out the task of evaluating the systolic growth in a number of explicit non-Carnot cases. Such results are presented in \S\ref{compu}. We start with some general upper bounds. We will often emphasize the quotient $\sigma'(r)=r^{-D}\sigma(r)$, since it often naturally occurs in computations, and its growth is a measure of the failure of being Carnot.

\subsection{Upper bounds}\label{s_iub}

We provide here some upper bounds on the systolic growth. 
 We denote by $\lceil\cdot\rceil$ the ceiling function. Given $c\ge 0$, we define
\[k_c(\g)=\sum_{i=1}^{\lceil c/2\rceil-1}\left(\frac{c}2-i\right)\dim(\g^i/\g^{i+1});\]
Note that $k_c(\g)\le\left(\frac{c}2-1\right)\dim(\g/\g^{\lceil c/2\rceil})$. 

\begin{prop}[See Proposition \ref{ineq}]
Let $\g$ be a finite-dimensional $c$-step nilpotent real Lie algebra with homogeneous dimension $D$, and let $k=k_c(\g)$ be defined as above. Assume that the corresponding simply connected nilpotent Lie group $G$ admits lattices. Then the systolic growth $\sigma(r)$ of $G$ and its lattices is $\preceq r^{D+k}$.
\end{prop}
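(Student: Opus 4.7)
The plan is to exhibit an explicit family of scaled sublattices realizing the claimed upper bound. Let $(e_1,\dots,e_d)$ be a Malcev basis of $\g$ adapted to the lower central series, with $e_j\in\g^{w_j}\smallsetminus\g^{w_j+1}$. Since $G$ admits lattices, after rescaling the $e_j$ we may assume the structure constants $c_{ij}^k$ defined by $[e_i,e_j]=\sum_k c_{ij}^k e_k$ are integers, so that $\Lambda_0=\bigoplus_j\mathbf{Z} e_j$ is a cocompact Lie subring of $\g$ corresponding (via Malcev) to a lattice $\Gamma_0\subset G$. For each positive integer $n$, I set
\[\Lambda_n=\bigoplus_j\mathbf{Z}\,n^{\max(2w_j,c)}\,e_j,\]
so that the scaling exponent equals $2\alpha_j$ with $\alpha_j=\max(w_j,c/2)$; note that $\sum_j\alpha_j=D+k_c(\g)$, since $\alpha_j-w_j=\max(c/2-w_j,0)$ matches exactly the contribution to $k_c(\g)$ summed over basis elements.

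The crucial verification is that $\Lambda_n$ remains bracket-closed, which amounts to $n^{\max(2w_k,c)}$ dividing $n^{\max(2w_i,c)+\max(2w_j,c)}$ whenever $c_{ij}^k\ne 0$. At such an index one has $w_i+w_j\le w_k\le c$, and a short case analysis on whether the weights $w_i,w_j$ lie below or above $c/2$ yields $\max(2w_k,c)\le\max(2w_i,c)+\max(2w_j,c)$: if $w_i,w_j\ge c/2$ the right-hand side is $2(w_i+w_j)\ge 2w_k\ge\max(2w_k,c)$, and if either weight is below $c/2$ the right-hand side is at least $2c$, which dominates $\max(2w_k,c)$ since $w_k\le c$. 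Passing to the corresponding lattice $\Gamma_n\subset G$ (the Baker--Campbell--Hausdorff denominators depend only on $c$ and are absorbed either by restricting to $n$ divisible by a fixed constant or by the paper's reformulation of $\sigma$ in terms of cocompact Lie subrings of $\g$), one immediately obtains
\[\mathrm{covol}(\Gamma_n)\asymp\prod_j n^{\max(2w_j,c)}=n^{2(D+k_c(\g))}.\]
For the systole, writing a non-identity $g\in\Gamma_n$ in coordinates of the second kind as $g=\exp(y_1 e_1)\cdots\exp(y_d e_d)$, some $y_j$ lies in $n^{\max(2w_j,c)}\mathbf{Z}\smallsetminus\{0\}$; the standard asymptotic equivalence of the word length on $G$ with the homogeneous norm $\max_j|y_j|^{1/w_j}$ then gives $|g|_G\gtrsim n^{\max(2w_j,c)/w_j}\ge n^{2}$, since $\max(2w_j,c)/w_j$ equals $2$ when $w_j\ge c/2$ and is strictly larger otherwise. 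Hence $\sys(\Gamma_n)\gtrsim n^{2}$.

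Setting $r=n^{2}$ then yields $\sigma(r)\le\mathrm{covol}(\Gamma_n)\preceq r^{D+k_c(\g)}$ along this subsequence, and the monotonicity of $\sigma$ extends the bound to all $r$; since the $\Lambda_n$ sit inside $\Lambda_0$ with finite index, the same family also bounds the systolic growth of any fixed lattice $\Gamma_0$. The main conceptual point, and the only real obstacle, is identifying the correct exponents $\alpha_j=\max(w_j,c/2)$: this is the minimal scaling that simultaneously preserves bracket-closure (forcing $\alpha_k\le\alpha_i+\alpha_j$ at every nonzero structure constant) and produces systolic contributions of the same order $n^{2}$ across every weight layer. Once the exponents are guessed, the argument collapses into the short case analysis above together with the standard dictionary between lattices and cocompact Lie subrings supplied earlier in the paper.
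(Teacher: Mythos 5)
Your proposal is correct and follows essentially the same route as the paper: scale a compatible integral basis by weight-dependent powers of the parameter, check bracket-closure by a short inequality on weights, and read off covolume and systole. The only differences are cosmetic — you use exponents $n^{\max(2w_j,c)}$ with $r=n^2$ where the paper takes $n^{\max(c/2,i)}$ with $n$ a perfect square (these are the same after a change of variable), and you bound the systole via coordinates of $g\in\Gamma_n$ rather than directly via the Guivar'ch length of nonzero elements of $\Lambda_n$, where the paper's Lie-algebra phrasing is slightly cleaner and also yields the stronger $G$-uniform bound without extra work.
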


We have $k_c(\g)\le\frac16\dim(\g)^2$ (see Proposition \ref{ub_kc}) when $c$ is the nilpotency length of $\g$, so that we obtain an upper bound on $\sigma(r)/r^D$ depending only on $\dim(\g)$. For small values of $c$, we have
\[k_{\le 2}(\g)=0;\quad k_3(\g)=\frac12\dim(\g/\g^2),\quad k_4(\g)=\dim(\g/\g^2);\] \[k_5(\g)=\frac32\dim(\g/\g^2)+\frac12\dim(\g^2/\g^3);\quad k_6(\g)=2\dim(\g/\g^2)+\dim(\g^2/\g^3).\]
Also note that $D(\g)+k_c(\g)\le c\dim(\g)/2$. This improves the trivial upper bound $\preceq r^{c\dim(\g)}$, making use of congruence subgroups in a lattice, which was mentioned in \cite{gralie}.

Note that every 2-step nilpotent Lie algebra is Carnot; the smallest nilpotency length allowing non-Carnot Lie algebra is 3. When $\g$ is 3-step nilpotent, the above proposition yields $\sigma(r)\preceq r^{D+\dim(\g/\g^2)/2}$. This bound is not very far from sharp, see Theorem \ref{ig3nil}.

\subsection{Lower bounds on the systolic growth and precise estimates}\label{compu}

This part is the bulk of the paper. It contains the first exact estimates of the asymptotic behavior of the systolic growth of nilpotent Lie groups beyond the Carnot case covered by Theorem \ref{carnoy}. The first non-Carnot Lie algebras occur in dimension 5 and in this case we have the following theorem.

\begin{thm}\label{t_5dim}
Let $G$ be a 5-dimensional simply connected nilpotent Lie group whose Lie algebra $\g$ is non-Carnot (there are 2 non-isomorphic possibilities for $\g$, for which the homogeneous dimension $D$ is either 8 or 11). Then the systolic growth of both $G$ and its lattices is $\simeq r^{D+1}$. 
\end{thm}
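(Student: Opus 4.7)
The two non-Carnot 5-dimensional real nilpotent Lie algebras admit the following presentations in a basis $e_1,\dots,e_5$: the $3$-step one with $(\dim\g^i)=(5,2,1)$ and $D=8$ has brackets $[e_1,e_2]=e_4$, $[e_1,e_4]=e_5$, $[e_2,e_3]=e_5$; the $4$-step one with $(\dim\g^i)=(5,3,2,1)$ and $D=11$ has brackets $[e_1,e_2]=e_3$, $[e_1,e_3]=e_4$, $[e_1,e_4]=e_5$, $[e_2,e_3]=e_5$. In both, the common bracket $[e_2,e_3]=e_5$ is anomalous in that $e_5$ lies in $\g^{\deg(e_2)+\deg(e_3)+1}$ rather than $\g^{\deg(e_2)+\deg(e_3)}$, and this filtration jump of $1$ is the Carnot obstruction. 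I will match $r^{D+1}$ both as an upper bound by a weighted sublattice and as a lower bound by showing that this one-step jump costs exactly one extra power of $r$ in covolume.

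\textbf{Upper bound.} Fix the $\mathbb{Z}$-form $\Lambda_0=\bigoplus_i\mathbb{Z}e_i$ of $\g$ and, for $r\in\mathbb{Z}_{\ge 1}$, let $\Lambda_r=\bigoplus_i\mathbb{Z}\,r^{w(i)}e_i$, with weights $w=(1,1,2,2,3)$ in the $D=8$ case and $w=(1,2,2,3,4)$ in the $D=11$ case. A case-by-case inspection of the listed structure constants shows that (i) $w(i)\ge\deg(e_i)$, which via the standard ball--box comparison between the word metric and the sub-Finsler metric ensures $\sys(\exp(\Lambda_r))\succeq r$, and (ii) $w(i)+w(j)\ge w(k)$ whenever $[e_i,e_j]$ has a non-zero $e_k$-component, which ensures that $\Lambda_r$ is a Lie subring of $\g$ and hence $\exp(\Lambda_r)$ is a lattice in $G$. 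The covolume is $\prod_i r^{w(i)}$, equal to $r^9$ resp.\ $r^{12}$, i.e.\ $r^{D+1}$.

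\textbf{Lower bound.} Conversely, let $\Lambda$ be any cocompact subring of $\g$ with $\sys(\exp(\Lambda))\ge r$, and choose a filtered Malcev basis $f_1,\dots,f_5$ with $f_i\in\g^{\deg(e_i)}$. After Hermite reduction within each layer, the covolume of $\Lambda$ equals, up to a bounded multiplicative factor, the product $\prod_i|\alpha_i|$, where $\alpha_i$ is the leading coefficient of $f_i$ on $e_i$ modulo $\g^{\deg(e_i)+1}$. The systole hypothesis gives $|\alpha_i|\succeq r^{\deg(e_i)}$, hence the naive bound $\prod_i|\alpha_i|\succeq r^D$. To extract the extra factor of $r$, I expand $[f_2,f_3]\in\Lambda\cap\g^{\deg(e_5)}$: its $e_5$-component is $\alpha_2\alpha_3$ plus Hermite-bounded cross-terms. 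Parallel expansions of $[f_1,f_3]$ (and, in the $D=11$ case, $[f_1,f_4]$ and $[f_2,f_4]$) pin down these cross-terms tightly enough that $\alpha_5\mid\alpha_2\alpha_3$; combined with $|\alpha_5|\succeq r^{\deg(e_5)}$ this yields $|\alpha_2\alpha_3|\succeq r^{\deg(e_5)}$, strictly exceeding the systole-only estimate $r^{\deg(e_2)+\deg(e_3)}=r^{\deg(e_5)-1}$ by a factor of $r$. Substituting back into $\prod_i|\alpha_i|$ gives covolume $\succeq r^{D+1}$.

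\textbf{Main obstacle and packaging.} The chief subtlety is that the Hermite cross-terms in $[f_2,f_3]$ can be comparable in absolute value to the leading $\alpha_2\alpha_3$, so a direct triangle inequality does not suffice; one really needs the simultaneous consistency of several bracket relations. A clean way to organize the bookkeeping is as a small linear program in the exponents $x_i=\log_r|\alpha_i|$, with constraints $x_i\ge\deg(e_i)$ (from systole) and $x_i+x_j\ge\deg(e_k)$ (from each non-trivial structure constant $[e_i,e_j]\ni e_k$); in both algebras, the minimum of $\sum_i x_i$ on this polytope is $D+1$, realized by the weight vectors of the upper-bound construction. Integrality and multiplicative constants from the ball--box comparison are absorbed into the $\preceq,\succeq$ notation.
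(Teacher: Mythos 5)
Your upper bound construction is correct and essentially matches the paper's (up to the exact choice of weights, which differ by a permutation within the first layer in the $D=8$ case but give the same covolume); the key bracket identity $[f_2,f_3]=\alpha_2\alpha_3\,e_5$ is also correct in both algebras, and in fact your worry about the cross-terms is unfounded: they vanish exactly, since $[e_2,\g^2]=[e_3,\g^2]=0$ in $L_{5,5}$ and $[e_2,\g^3]=[\g^2,e_3]=[\g^2,\g^3]=0$ in $L_{5,6}$, so no ``parallel expansions'' are needed.

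However, the lower bound as written has a genuine gap. The assertion ``the systole hypothesis gives $|\alpha_i|\succeq r^{\deg(e_i)}$'' is false for $i<5$. The Guivarch systole $\ge r$ only guarantees that each nonzero $v\in\Lambda$ has \emph{some} component $v_j$ with $\|v_j\|\ge r^j$; it does not give a pointwise bound on each Hermite diagonal entry. Concretely, for $L_{5,5}$ one can have $|\alpha_4|\ll r^2$ as long as the $e_5$-coordinate $\beta$ of the reduced representative $f_4=\alpha_4 e_4+\beta e_5$ satisfies $|\beta|\ge r^3$, which is possible when $|\alpha_5|$ is large. What the systole does give (this is the paper's Lemma~\ref{bas_ineq}, applied to the complete flag $(\g_{\ge i})$ of \emph{solid} ideals, established in Lemma~\ref{solid_5d}) is the family of \emph{tail-product} bounds $\prod_{j\ge i}|\alpha_j|\succeq r^{\sum_{j\ge i}\deg(e_j)}$. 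With only these and $|\alpha_2\alpha_3|\ge|\alpha_5|$, your argument cannot reach $r^{D+1}$: you have no control on $|\alpha_1\alpha_4|$ other than through the full product, and $\sum_i\log_r|\alpha_i|\ge D$ is all you get. The missing ingredient is the second bracket relation $[e_1,e_4]=e_5$, which, by the same Hermite computation ($[f_1,f_4]=\alpha_1\alpha_4\,e_5$ with no cross-terms), yields $|\alpha_1\alpha_4|\ge|\alpha_5|$. In the normalized variables $A_i=\log_r|\alpha_i|-\deg(e_i)$ this reads $A_1+A_4\ge A_5$, which together with $A_2+A_3\ge A_5+1$ and $A_5\ge0$ gives $\sum A_i\ge1$, i.e.\ covolume $\succeq r^{D+1}$. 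This is precisely the pair of inequalities the paper uses (Lemma~\ref{Aiine}); without the $[e_1,e_4]$ constraint (or, equivalently, without the false pointwise bounds), the argument does not close.

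Your ``small linear program'' packaging reflects the same issue: the constraint $x_i\ge\deg(e_i)$ from systole is not a valid constraint; the correct systole constraints are the tail-sum inequalities $\sum_{j\ge i}x_j\ge\sum_{j\ge i}\deg(e_j)$, and these alone with $x_2+x_3\ge\deg(e_5)$ do not force $\sum x_i\ge D+1$ — you must also add $x_1+x_4\ge\deg(e_5)$. Finally, to deduce the result for \emph{all} lattices (not only those compatible with one preferred integral structure), you need to justify that the intermediate flag terms $\g_{\ge 2}$ (and $\g_{\ge 3}$ in the $D=8$ case) are $\mathbf{Q}$-defined for every rational structure (solidity), or alternatively invoke the uniqueness of the rational structure in dimension $\le 5$; your proposal does not address this point.
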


Both cases are obtained in a single proof. In dimension 6, the classification yields 13 non-Carnot nilpotent real Lie algebras; a similar approach provides precise estimates for at least some of them, but I do not know if it can exhibit a behavior different from being $\simeq r^{D+h}$ with $h\in\{1,2,3\}$. In dimension 7, where a classification is still known (but lengthy), a similar approach yields an example for which the degree is not an integer:

\begin{thm}\label{7nonin}
There exists a 7-dimensional simply connected nilpotent Lie group for which the systolic growth, as well as the systolic growth of one of its lattices, is $\simeq r^{D+3/2}$.
\end{thm}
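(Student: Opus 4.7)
The plan is to exhibit a specific 7-dimensional rational Lie algebra $\g$ that is 3-step nilpotent with $\dim(\g/\g^2)=3$ and is non-Carnot (so $\g\not\cong\mathrm{gr}(\g)$), and take for $G$ its associated simply connected Lie group. With $\dim(\g/\g^2)=3$ and $c=3$, the formula of \S\ref{s_iub} gives $k_3(\g)=\tfrac12\dim(\g/\g^2)=3/2$, so the proposition stated in that subsection immediately yields the upper bound $\sigma(r)\preceq r^{D+3/2}$ for $G$ and for any of its lattices. The remaining $4$ dimensions can be split between $\g^2/\g^3$ and $\g^3$ in whatever way makes the lower-bound computation cleanest; once the existence of a lattice is guaranteed (by Malcev's criterion applied to the chosen rational structure), the upper bound is essentially automatic.

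For the lower bound I would use Theorem~\ref{carnoy} as a starting point: any discrete cocompact subring $\Lambda\subset\g$ of systole $\ge r$ already has covolume $\succeq r^D$, because the induced lattices in each layer $\g^i/\g^{i+1}$ of the lower central filtration have systole $\ge r$. The extra factor $r^{3/2}$ must come from the non-Carnot obstruction: the bracket relations of $\g$ couple the generator directions in $\g/\g^2$ to the deeper layers in a way that prevents $\Lambda$ from being the dilate of a fixed rational Carnot-type lattice, forcing extra denominators in its structure constants. Concretely, I would parametrize the candidate subrings by the real diagonal subgroup of $\Aut(\g)$ adapted to the filtration, giving three scaling parameters for the three generator directions; express both covolume and systole as monomials in these parameters; and minimize the former subject to the latter being $\ge r$. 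The expected optimum is a one-parameter family that balances dilation across the three generator directions against the constraints imposed by the structure constants of $\g$, yielding covolume $\simeq r^{D+3/2}$.

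The main obstacle is twofold. First, one must choose $\g$ carefully enough that the resulting geometric-programming problem has its optimum on the exponent $3/2$ rather than $1$ or $2$; this fractional value is the very point of Theorem~\ref{7nonin}, and it reflects the delicate interplay between the three generator directions and the specific shift relations encoded in $\g$. Second, one must prove the lower bound for \emph{every} cocompact subring, not merely those diagonal with respect to a preferred basis; here a Mahler-style compactness or reduction-theoretic argument is needed to confine the optimization to a finite-dimensional parameter space and to rule out exotic lattices. I expect the reduction step to be technically the most demanding part, whereas the explicit computation of the optimum of the parametrized problem should be routine once $\g$ has been selected correctly.
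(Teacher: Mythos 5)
Your proposal diverges from the paper's proof in a way that leaves the central difficulty untouched, and the example you sketch may not even work. The paper's 7-dimensional example is \emph{filiform} (nilpotency length $6$, lower central series $12/3/4/5/6/7$, $D=29$), not a $3$-step algebra with $\dim(\g/\g^2)=3$ as you propose. The fact that $k_3(\g)=3/2$ for your hypothetical algebra only gives the \emph{upper} bound from Proposition~\ref{ineq}; it gives no indication that this bound is achieved, and indeed the paper's $3$-step examples in Theorem~\ref{ig3nil} show that the $k_c$ upper bound is generically \emph{not} sharp (there $k_3=n+1$ while $h=n$). So even if you produced a concrete $3$-step non-Carnot $\g$ with $\dim(\g/\g^2)=3$, you would still need to prove that $\sigma'(r)\succeq r^{3/2}$, and there is no reason that should hold.

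The more serious gap is the lower bound. You defer to a ``Mahler-style compactness or reduction-theoretic argument'' to reduce from arbitrary cocompact subrings to a finite-dimensional diagonal family, acknowledging that this is ``technically the most demanding part'' and not carrying it out. That is precisely what the paper's machinery of \emph{solid ideals} is designed to sidestep: since the flag $\g=\g_{\ge 1}>\cdots>\g_{\ge 7}>\{0\}$ in a filiform algebra consists of solid ideals (Example~\ref{flagfili}), \emph{every} discrete cocompact subring $\Lambda$ automatically intersects each $\g_{\ge i}$ in a lattice, producing well-defined parameters $a_i$ (covolumes of the layer projections) with $\cov(\Lambda)=\prod a_i$. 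No reduction to a diagonal subfamily is needed. After renormalizing by the dilation $u(r)$ so that $\Xi_r=u(r)^{-1}\Lambda_r$ has systole $\ge1$ in $\g[r]$, the bracket relations of the specific filiform algebra (whose non-Carnot brackets $[e_2,e_3]=e_5$, $[e_2,e_4]=e_6$, $[e_3,e_4]=e_7$ pick up a factor $r^{-1}$ in $\g[r]$) translate into explicit inequalities on the $A_i=\log_r a_i$: one gets $2A_1+A_5+A_6\ge0$ and $2(A_2+A_3+A_4)\ge3$, which combined with the trivial $A_5+A_6+A_7\ge0$ and $A_7\ge0$ give $\sum A_i\ge3/2$. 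The upper bound then comes from the explicit lattice $\Xi'_r=\langle e_1,\sqrt{r}e_2,\sqrt{r}e_3,\sqrt{r}e_4,e_5,e_6,e_7\rangle_{\mathbf{Z}}$ in $\g[r]$. Note also that for your $3$-step proposal the complete flag is \emph{not} made of solid ideals (cf.\ the analysis around Theorem~\ref{g3nil}, which requires the symplectic Lemma~\ref{covollati} precisely because of this), so even the $a_i$ are not automatically well-defined for an arbitrary subring; your route would thus face an additional obstruction the paper's filiform example avoids entirely.
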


This contrasts with the fact that the volume growth always has an integral degree of polynomial growth (the homogeneous dimension $D$). Yet so far we only know, for the systolic growth, behaviors of the form $r^{D+h}$ with $h$ a non-negative rational, but we actually do not know if $\log\sigma(r)/\log(r)$ always converges, and if so, if its limit is always a rational, and what kind of further constraints we can expect on $h$ (see the questions below).

At the computational level, let us also provide some families of unbounded dimension, for which we obtain unbounded values for $h$.

\begin{thm}[Truncated Witt Lie algebra]\label{vira}
For $n\ge 3$, let $G(n)$ be the simply connected nilpotent Lie group corresponding to the Lie algebra $\g(n)$ with basis $(e_i)_{1\le i\le n}$ and nonzero brackets $[e_i,e_j]=(i-j)e_{i+j}$, ($i+j\le n$). Then its systolic growth grows as $r\mapsto r^{D+h}$ with $h=\lceil (n-4)/2\rceil$ (here the homogeneous dimension is $D=\frac{n(n-1)}{2}+1$). 
\end{thm}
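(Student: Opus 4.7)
I would show $\sigma(r)\asymp r^{D+h}$ with $h=\lceil(n-4)/2\rceil=\lfloor(n-3)/2\rfloor$, by constructing an explicit family of lattices realizing this growth and by proving a matching general lower bound using the non-Carnot structure of the truncated Witt brackets.

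\textbf{Upper bound via diagonal lattices.} Let $w_1=1$ and $w_k=k-1$ for $k\ge2$ denote the Carnot weights arising from the lower central series of $\g(n)$, and define integer weights $v_k:=w_k+\epsilon_k$ where $\epsilon_k=1$ for $k\in\{2,3,\ldots,\lfloor(n-1)/2\rfloor\}$ and $\epsilon_k=0$ otherwise. Any pair $(i,j)$ with $2\le i<j$ and $i+j\le n$ has its smaller endpoint in $\{2,\ldots,\lfloor(n-1)/2\rfloor\}$ (since $i<j$ and $i+j\le n$ force $i\le\lfloor(n-1)/2\rfloor$), from which $v_i+v_j\ge v_{i+j}$; the case $i=1$ is immediate. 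For each integer $N\ge1$, the $\mathbf{Z}$-module $\Lambda_N:=\bigoplus_k\mathbf{Z}\cdot N^{v_k}e_k$ is then a Lie subring (using the integer structure constants $(i-j)$), so corresponds to a lattice in $G(n)$; since $v_k\ge w_k$, each basis vector $N^{v_k}e_k$ has length $\gtrsim N^{v_k/w_k}\ge N$ in $G(n)$, so the systole is $\gtrsim N$. The covolume is $N^{\sum_k v_k}=N^{D+h}$, giving $\sigma(r)\lesssim r^{D+h}$.

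\textbf{Lower bound via Malcev basis.} Let $\Lambda\subset\g(n)$ be any discrete cocompact Lie subring with systole $\ge r$. Choose a Malcev basis $y_1,\ldots,y_n$ adapted to the lower central series: $y_1,y_2$ project to a $\mathbf{Z}$-basis of the 2D lattice $M:=\Lambda/(\Lambda\cap\g^2)$, and for $k\ge3$, $y_k\in\Lambda\cap\g^{k-1}$ projects to a generator $c_ke_k$ of the 1D quotient $(\Lambda\cap\g^{k-1})/(\Lambda\cap\g^k)$. The weighted-length hypothesis gives $|c_k|\ge cr^{k-1}$ for $k\ge3$; two-dimensional Minkowski gives $\mathrm{covol}(M)\ge cr^2$; together this yields the baseline $\mathrm{covol}(\Lambda)\ge c'r^D$. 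The extra factor $r^h$ comes from the non-Carnot brackets: the identity $[e_i,e_j]=(i-j)e_{i+j}$ for $i,j\ge2$ lands in $\g^{i+j-1}$, one filtration step deeper than the Carnot prediction $w_i+w_j=i+j-2$. Thus for each pair $(i,j)$ with $3\le i<j$ and $i+j\le n$, the bracket $[y_i,y_j]$ projects modulo $\g^{i+j}$ to $c_ic_j(i-j)e_{i+j}$, which lies in the rank-1 lattice $c_{i+j}\mathbf{Z}\,e_{i+j}$; the resulting divisibility combined with $|c_{i+j}|\ge cr^{i+j-1}$ gives $|c_ic_j|\ge c'r^{i+j-1}/|i-j|$, producing an extra factor of $r$ over the baseline $|c_ic_j|\ge r^{i+j-2}$ once $r$ exceeds the fixed constant $|i-j|$. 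For a pair $(2,j)$ with $j+2\le n$, an analogous argument, applied after first killing the Carnot leading $\g^j/\g^{j+1}$-term of $[y_2,y_j]$ by an integer multiple of $y_{j+1}$ (integrality via the Carnot divisibility at level $j$), forces the generator $a_2$ of the second-coordinate projection $\pi_2(M)\subset\mathbf{R}$ to satisfy $a_2\gtrsim r^2$, boosting $\mathrm{covol}(M)=a_1a_2$ by a factor $r$.

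\textbf{Assembly.} Given a matching $\mathcal{M}$ of the graph $\mathcal{G}$ on vertex set $\{2,\ldots,n\}$ with edges $\{i,j\}$ (for $i\ne j$, $i+j\le n$), the above improvements constrain disjoint variables (either $\mathrm{covol}(M)$ via an edge incident to $2$, or disjoint $c_ic_j$ products), so they multiply to give $\mathrm{covol}(\Lambda)\gtrsim r^{D+|\mathcal{M}|}$. A short combinatorial argument, using that every edge has its smaller endpoint in $\{2,\ldots,\lfloor(n-1)/2\rfloor\}$, shows that the maximum matching of $\mathcal{G}$ equals the minimum vertex cover and has size exactly $\lfloor(n-3)/2\rfloor=h$, realized by the reflection pairing $\{(2,n-2),(3,n-3),\ldots\}$, skipping $(n/2,n/2)$ when $n$ is even. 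This yields $\mathrm{covol}(\Lambda)\gtrsim r^{D+h}$ and matches the upper bound. The main obstacle is the clean extraction of the non-Carnot coefficient in the $(2,j)$ case: after the integer correction by $y_{j+1}$, the residual level-$(j+1)$ coefficient is $a_2c_j(2-j)$ plus contributions from higher-order parts of $y_2,y_j,y_{j+1}$, and one must verify that these corrections cannot absorb the extra factor of $r$ on $\mathrm{covol}(M)$; this bookkeeping is cleanest by induction on filtration depth, in line with the general framework for systolic covolumes via Lie subrings developed earlier in the paper.
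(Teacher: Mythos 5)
Your upper bound construction is essentially identical to the paper's (the paper sets $A_k=1$ for $2\le k<n/2$, $A_k=0$ otherwise, in its renormalized notation, which agrees with your weights $v_k=w_k+\epsilon_k$), and your verification of closure under the bracket is correct. The family of constraints you extract from the non-Carnot brackets $[e_i,e_j]$ with $i,j\ge2$, namely $a_ia_j\gtrsim r\,a_{i+j}$, is also the right raw material — the paper derives the same constraints (Lemma \ref{bas_ineq} and the inequalities $A_1+A_i\ge A_{i+1}$, $A_i+A_j\ge A_{i+j}+1$) via the renormalization $\Xi_r=u(r)^{-1}\Lambda_r$ in $\g[r]$. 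But the assembly step has a real gap.

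The claim that a matching of size $m$ ``multiplies'' to give $\mathrm{covol}(\Lambda)\gtrsim r^{D+m}$ is not justified, and it fails as stated. The covolume is $\prod_{k=1}^n a_k$; a matching constraint $a_ia_j\ge r\,a_{i+j}$ improves the product only after you account for the factor $a_{i+j}$ on the right-hand side and for the unmatched indices. For $n$ odd, the reflection matching can be closed off using the extra constraints $A_1+A_{n-1}\ge A_n$ and $A_n\ge0$, $A_{n-1}+A_n\ge0$ (you do not state this), and this is in fact how the paper argues. For $n=2m$ even, however, the unmatched middle vertex $m$ together with $a_1$ leaves a residual $a_1a_ma_{2m-1}a_{2m}^{m-1}$ that the systole constraints and a single $A_1+A_i\ge A_{i+1}$ inequality do not obviously control; one can check that for $n=8$ your reflection matching $\{2,6\},\{3,5\}$ does not close with any single auxiliary inequality. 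The paper's Lemma \ref{ineqvi} resolves this with a different, hand-crafted pairing (it pairs $1$ with an index $\ne n-1$, and groups terms by triples summing to $2m-2,2m-1,2m$), followed by a case analysis modulo $3$ in the even case. Your ``short combinatorial argument'' invoking König-type duality is also suspect, as the constraint graph on $\{2,\dots,n\}$ with edges $\{i,j\}$, $i+j\le n$, contains triangles and is not bipartite.

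A second, more structural issue is that your argument silently assumes that the $e_1$- and $e_2$-components of the degree-$1$ block of $\Lambda$ can be read off separately (``the generator $a_2$ of the second-coordinate projection $\pi_2(M)$''). This only makes sense if $\mathbf{R}e_2\oplus\g^2$ — that is, the hyperplane $\g_{\ge2}$ — is rational for the lattice, which is exactly the solidity of $\g_{\ge2}$. The paper establishes this via the filiform structure (Example \ref{flagfili}: $\g_{\ge2}$ is the centralizer of $\g^2$ modulo $\g^4$), which lets it use a complete flag $(\g_{\ge i})_i$ of solid ideals and split the $2$-dimensional top quotient into two $1$-dimensional ones. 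Without this, $\pi_2(M)$ need not be discrete and $a_2$ is undefined. So the ``main obstacle'' you flag in the $(2,j)$ case is real, but its correct resolution is not the bookkeeping you describe — it is the solidity of $\g_{\ge2}$, plus the rescaling $f_i=n^{-1}e_i$ that the paper performs so that the structure constants $(i-j)$ are all at most $1$ in absolute value (removing the $|i-j|$ loss that your inequality $|c_ic_j|\ge c'r^{i+j-1}/|i-j|$ carries).
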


The following family of examples with unbounded $h$ consists of 3-step nilpotent Lie groups.

\begin{thm}[see Theorem \ref{g3nil}]\label{ig3nil}
For $n\ge 0$, let $\g(4+2n)$ be the 3-step nilpotent $(4+2n)$-dimensional Lie algebra obtained as central product of a 4-dimensional filiform Lie algebra and a $(2n+1)$-dimensional Heisenberg Lie algebra, and $G(4+2n)$ the corresponding simply connected nilpotent Lie group. Then its systolic growth grows as $r\mapsto r^{D+n}$, where $D=2n+7$ is the homogeneous dimension.
\end{thm}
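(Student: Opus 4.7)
The plan is to establish matching bounds $\sigma(r)\simeq r^{D+n}$ by exploiting the non-Carnot obstruction of $\g=\g(4+2n)$. In the natural basis $X_1,X_2,Y_1,\dots,Y_n,Z_1,\dots,Z_n,X_3,X_4$, the non-zero brackets are $[X_1,X_2]=X_3$, $[X_1,X_3]=X_4$ and $[Y_i,Z_i]=X_4$, giving $\g^2=\langle X_3,X_4\rangle$ and $\g^3=\langle X_4\rangle$. The obstruction is that $[Y_i,Z_i]$ lands in $\g^3$ rather than $\g^2$: the induced alternating map $\omega:\g/\g^2\wedge\g/\g^2\to\g^3\cong\mathbf R$ is a symplectic form of rank $2n$ on the Heisenberg subspace $W=\langle Y_1,Z_1,\dots,Y_n,Z_n\rangle$ with kernel $\langle X_1,X_2\rangle$. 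Both bounds are driven by this $\omega$.

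For the upper bound, I would exhibit the Lie algebra automorphism $\delta_r$ given by $\delta_r(X_1)=rX_1$, $\delta_r(X_2)=rX_2$, $\delta_r(Y_i)=rY_i$, $\delta_r(Z_i)=r^2Z_i$, $\delta_r(X_3)=r^2X_3$, $\delta_r(X_4)=r^3X_4$. The asymmetric weights on $Y_i$ and $Z_i$ (which preclude a Carnot dilation) are forced by $1+2=3$ to make $[Y_i,Z_i]=X_4$ weight-preserving, and the remaining brackets $[X_1,X_2]=X_3$ and $[X_1,X_3]=X_4$ are then also weight-preserving. Applying $\delta_r$ to the standard $\mathbf Z$-form of $\g$ yields a discrete cocompact subring all of whose Malcev generators have word length $\asymp r$ in $G$, with covolume $r^{2+n}\cdot r^{2n}\cdot r^2\cdot r^3=r^{3n+7}=r^{D+n}$.

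For the lower bound, let $\Lambda\subset\g$ be any discrete cocompact subring with $\sys(\Lambda)\ge r$. Set $\Lambda_1=\Lambda/\Lambda\cap\g^2$ and choose a $\mathbf Z$-basis $\Lambda\cap\g^2=\mathbf Z\langle\mu X_3+\xi X_4,\nu X_4\rangle$; standard word-length estimates give $\nu\ge cr^3$ and $\mu\ge cr^2$. Expanding $[\lambda,\lambda']\in\Lambda\cap\g^2$ for $\lambda,\lambda'\in\Lambda$ and reading the $X_3$- and $X_4$-components yields two scalar equations. The $X_3$-equation $\alpha_\lambda\beta_{\lambda'}-\alpha_{\lambda'}\beta_\lambda\in\mu\mathbf Z$ (where $\alpha,\beta$ denote the $X_1,X_2$-coordinates in $\g/\g^2$), together with a short discreteness argument ruling out accumulation of $\mu$-integral determinants, forces the projection $\pi_{12}(\Lambda_1)\subset\langle X_1,X_2\rangle$ to be a rank-$2$ lattice of covolume $\ge\mu$; by rank counting, $\Lambda_1\cap W$ is then a full-rank lattice in $W\cong\mathbf R^{2n}$. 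Restricting the $X_4$-equation to $\lambda,\lambda'\in\Lambda_1\cap W$ kills every correction term (each carries an $\alpha$ or $\beta$ factor) and leaves the clean statement $\omega(\Lambda_1\cap W,\Lambda_1\cap W)\subset\nu\mathbf Z$. The Pfaffian of the Gram matrix of $\omega$ on a $\mathbf Z$-basis of $\Lambda_1\cap W$ is then a nonzero element of $\nu^n\mathbf Z$ equal in absolute value to $\cov(\Lambda_1\cap W)$, so $\cov(\Lambda_1\cap W)\ge\nu^n$. Combining via the exact sequence $0\to\Lambda_1\cap W\to\Lambda_1\to\pi_{12}(\Lambda_1)\to 0$ yields $\cov(\Lambda_1)\ge c\mu\nu^n$, and hence $\cov(\Lambda)\ge c\mu^2\nu^{n+1}\ge cr^{3n+7}=cr^{D+n}$.

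The main technical obstacle is the bookkeeping in the $X_4$-equation: its general form is $\omega(\lambda,\lambda')+(p_{\lambda'}\alpha_\lambda-p_\lambda\alpha_{\lambda'})-(\xi/\mu)(\alpha_\lambda\beta_{\lambda'}-\alpha_{\lambda'}\beta_\lambda)\in\nu\mathbf Z$, carrying both a coboundary correction that depends on the chosen lifts of a basis of $\Lambda_1$ to $\Lambda$ and a ``twist'' governed by $\xi\in[0,\nu)$. The device that makes the Pfaffian argument apply cleanly is to restrict to $\Lambda_1\cap W$, on which both corrections vanish; the two ranks lost in this restriction are exactly what the $X_3$-equation recovers via the independent bound $\cov(\pi_{12}(\Lambda_1))\ge\mu$.
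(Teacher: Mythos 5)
Your upper bound is essentially the paper's construction (the lattice $\Lambda_r$ with basis $rU,rV,rX_i,r^2Y_i,r^2W,r^3Z$ in the paper's notation, realized by the diagonal automorphism $\delta_r$), so that part is fine, modulo the small imprecision that the image of the weight-$2$ generators $Z_i$ and $X_3$ under $\delta_r$ has word length $\asymp r^2$, not $\asymp r$; what matters is that the systole is $\ge r$, which holds.

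Your lower bound takes a genuinely different route from the paper. The paper rescales by the dilation $u(r)$ to pass to a subring $\Xi_r=u(r)^{-1}\Lambda_r$ of systole $\ge 1$ in a deformed algebra $\g[r]$, uses the solid-ideal flag $\{0\}\subset\g^3\subset\g^2\subset\mk c\subset\mk h\subset\g$ (with $\mk c$ the centralizer of the second ascending-central term and $\mk h=\mk c+\mk j$), extracts one nontrivial covolume inequality $\alpha^2\beta\ge\eta$ from the filiform brackets, and feeds the symplectic constraint into the inductive Lemma \ref{covollati}. You instead work unrescaled, use only the lower central series $\g^3\subset\g^2\subset\g$, extract from the $X_3$-component of brackets that the determinants of $\pi_{12}(\Lambda_1)$ lie in $\mu\mathbf Z$ (hence $\pi_{12}(\Lambda_1)$ is a discrete rank-$2$ lattice of covolume $\ge\mu$), deduce by rank-counting that $\Lambda_1\cap W$ is a full-rank lattice, restrict the $X_4$-equation to $\Lambda_1\cap W$ so the correction terms vanish, and then replace Lemma \ref{covollati} by the Pfaffian computation $\lvert\mathrm{Pf}(B^\top JB)\rvert=\lvert\det B\rvert=\cov(\Lambda_1\cap W)\in\nu^n\mathbf Z\setminus\{0\}$. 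The Pfaffian step is a clean alternative to the paper's inductive lemma; the discreteness argument for $\pi_{12}(\Lambda_1)$ plays the role the paper gets for free from solidity of $\mk h$ and $\mk c$. Both decompositions reach the same bound $\cov(\Lambda)\ge\cov(\Lambda_1\cap W)\cdot\cov(\pi_{12}(\Lambda_1))\cdot\cov(\Lambda\cap\g^2)\ge\mu^2\nu^{n+1}$.

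There is, however, a genuine error in one intermediate step: the claim that ``standard word-length estimates give $\mu\ge cr^2$'' is false. With $\xi$ chosen mod $\nu$ so that $|\xi|\le\nu/2$, the systole condition on $\mu X_3+\xi X_4$ only forces $\max(\mu^{1/2},|\xi|^{1/3})\ge r$, which is compatible with $\mu$ being as small as $O(1)$ provided $\nu$ (and the reduced $\xi$) are large enough; indeed $\mu$ is a covolume factor, not a length, and nothing in the bracket relations pins it from below by $r^2$. What is true, and what you should use, is the Minkowski bound $\mu\nu=\cov(\Lambda\cap\g^2)\ge c\,r^5$ (the Guivarch ball in $\g^2$ has volume $\asymp r^{2+3}$). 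Together with $\nu\ge r^3$ this still gives, for $n\ge 1$,
\[\cov(\Lambda)\ \ge\ \mu^2\nu^{n+1}\ =\ (\mu\nu)^2\,\nu^{\,n-1}\ \ge\ c^2\,r^{10}\cdot r^{3(n-1)}\ =\ c^2\,r^{3n+7}\ =\ c^2\,r^{D+n},\]
and the case $n=0$ is the Carnot case already handled by Theorem \ref{carnoy}. So the conclusion survives, but the specific assertion $\mu\ge cr^2$ needs to be replaced by $\mu\nu\ge cr^5$ and the exponent-bookkeeping adjusted as above.
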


The same method actually yields examples for which the polynomial degree of $r^{-D}\sigma(r)$ is comparable to the square of the dimension, see Remark \ref{quadra_h}.

\subsection{Outline of the method}

Let us outline the method used to obtained the estimates of \S\ref{compu}.

The first step is to translate the problem, which concerns lattices in a simply connected nilpotent Lie group, into a problem about discrete cocompact subrings in its Lie algebra. This uses the fact that even if there is no exact correspondence between the two (the exponential of a Lie subring can fail to be a subgroup, and the logarithm of a subgroup can fail to be a Lie subring), this is true ``up to bounded index". This important fact, for which we claim no originality but could not refer to a written proof, is formulated in Lemma \ref{llazard} and proved (along with a more precise statement) in the appendix. 

Then the notion of systolic growth can be made meaningful in a real finite-dimensional nilpotent Lie algebra: it maps $r$ to the smallest covolume of a discrete cocompact subring of Guivarch systole $\ge r$. Here the Guivarch systole is the Lie algebra counterpart of the systole: this is the smallest Guivarch length of a nontrivial element in the lattice. The Guivarch length is recalled in \S\ref{pre_la}; for instance, in the 3-dimensional Heisenberg Lie algebra, the Guivarch length of an element $\begin{pmatrix}0 & x & z\\ 0 & 0 & y\\ 0 & 0 & 0\end{pmatrix}$ can be defined as the value $|x|+|y|+|z|^{1/2}$. See \S\ref{syrnla}. The previous fact shows that the systolic growth of a simply connected nilpotent Lie group is asymptotically equivalent to that of its Lie algebra. 

Next, we have to estimate the systolic growth in various Lie algebras. The idea is to use a flag of rational ideals $\g=\mk{w}_1\ge \dots \dots\mk{w}_k=\{0\}$. Here, if we consider arbitrary lattices, we need these ideals to be rational for every rational structure (we call this solid and provide some basic fact about such ideals in \S\ref{s_solid}). For instance, terms of the lower central series are such ideals. Then any lattice intersects each $\mk{w}_i$ into a lattice and this intersection maps into a lattice in $\mk{w}_i/\mk{w}_{i+1}$; let $a_i$ be the corresponding covolume. Then the covolume of the whole lattice is $\prod_ia_i$. Then we use the stability under brackets and the hypothesis of Guivarch systole $\ge r$ to obtain lower bounds on $\prod a_i$, which in some cases are better than the trivial lower bound (the trivial lower bound has the form $\succeq r^D$).

More precisely, this approach typically yields, for a lattice of Guivarch systole $\ge r$, some inequalities of the form $a_ia_j\ge a_k r^{m(i,j,k)}$ for some integer $(i,j,k)$. If we write $A_i=\log_r(a_i)$ (so that the covolume is $r^{\sum A_i}$), this can be rewritten as $A_i+A_j\ge A_k+m(i,j,k)$. Then such a family of inequalities can yield a lower bound of the form $\sum A_i\ge q$ for some rational $q$, and thus yielding a lower bound for the covolume $\prod a_i \ge r^q$. 

Once such a method is checked to yield precise estimates in some cases, it is not a surprise to find that in well-chosen examples, it yields non-integral degrees, as in Theorem \ref{7nonin}.

Let us also mention that we actually renormalize the problem by a well-chosen family of linear automorphisms (\S\ref{s_dil}), which yields lower bounds for $r^{-D}\sigma(r)$ and simplifies the computations (for instance, it allows to treat simultaneously both examples of Theorem \ref{t_5dim}). The approach also provides a new, simpler proof of the implication (\ref{i_gd})$\Rightarrow$(\ref{i_gc}) in Theorem \ref{carnoy}, see \S\ref{s_small}.

\subsection{Open questions}
Let us mention some open problems. Since the number $h$ is possibly not always defined, we write things as follows. Let $H$ be a compactly generated, locally compact group of polynomial growth, admitting at least a lattice (we especially have in mind the cases when $H$ is a simply connected nilpotent Lie group, or $H$ is a finitely generated nilpotent group). So its systolic growth $r\mapsto\sigma(r)$ is well-defined. Let $D$ be its homogeneous dimension. Define 
\[\underline{h_H}=\underline{\lim}\frac{\log(\sigma(n)/n^D)}{\log(n)};\quad\overline{h_H}=\overline{\lim}\frac{\log(\sigma(n)/n^D)}{\log(n)}\]

\begin{que}\label{conj_infsup}~
\begin{enumerate}
\item\label{conjis}
Is it always true that $\underline{h_H}=\overline{h_H}$? (I conjecture a positive answer).
\item Are $\underline{h_H}$ and $\overline{h_H}$ always rational numbers?
\item Is it true that $\sigma_H(n)\simeq n^{D+h'}$ for some $h'\ge 0$? (Of course this implies a positive answer to (\ref{conjis}), but this is more optimistic and I do not conjecture anything.)
\end{enumerate}
\end{que}

\begin{que}
Does there exist infinitely many non-equivalent types of systolic growth asymptotically bounded above by some give polynomial?
\end{que}

\begin{que}\label{q_GamG}
Let $G$ be a simply connected nilpotent Lie group with a lattice $\Gamma$, with systolic growth $\sigma_G$ and $\sigma_\Gamma$ (so $\sigma_G\preceq\sigma_\Gamma$).

\begin{enumerate}
\item Do we always have $\sigma_G\simeq\sigma_\Gamma$? 
\item We now refer to the uniform systolic growth introduced in \S\ref{s_usg}. We have $\sigma_G\preceq\sigma^u_G\preceq\sigma^u_{\Gamma,G}$. Do we always have $\sigma_G\simeq\sigma^u_G$? Do we always have $\sigma^u_G\simeq\sigma^u_{\Gamma,G}$?
\end{enumerate}
\end{que}

\noindent{\bf Acknowledgements.} I thank Yves Benoist for useful hints, and Pierre de la Harpe for corrections on a preliminary version of this paper. I thank the referee for various corrections and useful references.

\setcounter{tocdepth}{1}
\tableofcontents

\section{Algebraic preliminaries}

In all this section, $K$ is a field of characteristic zero, unless explicitly specified.

\subsection{Lie algebras: lower central series, growth}\label{pre_la}

Let $\g$ be a nilpotent fd Lie algebra over $K$ (fd stands for finite-dimensional). Its lower central series is defined by $\g^1=\g$, $\g^{i+1}=[\g,\g^i]$ for $i\ge 1$. Let $\mk{v}_i$ be a supplement subspace of $\g^{i+1}$ in $\g^i$, so that $\g=\bigoplus_{i\ge 1}\mk{v}_i$ (and $v_i=0$ for large $i$). We have, for all $i,j$
\[ [\mk{v}_i,\mk{v}_j]\subset \bigoplus_{k\ge i+j}\mk{v}_k,\qquad \mk{v}_{i+1}\subset [\mk{v}_1,\mk{v}_i]+\g^{i+2}.\]


Let $G$ be the group of $K$-points of the corresponding unipotent algebraic group; $G$ can obtained from $\g$ using the Baker-Campbell-Hausdorff formula as group law. When $K=\mathbf{R}$, this is the simply connected Lie group associated to $\g$.

The integer
\[D=D(\g)=D(G)=\sum i\dim(\mk{v}_i)=\sum_i\dim\g^i\]
is called the homogeneous dimension of $G$.
Indeed, in the real case, the volume of the $r$-ball is $\simeq r^D$ \cite{Guiv} (see \S\ref{s_asy} for the definition of $\simeq$); this degree formula was also found by Bass \cite{Bas} while restricting to the discrete setting. We have $D\ge\dim(\g)$, with equality if and only if $\g$ is abelian.

Again in the real case, fix a norm on each $\mk{v}_i$. If $x=(x_1,x_2,\dots)\in\g$ in the decomposition $\g=\bigoplus_{i\ge 1}\mk{v}_i$, define its Guivarch length $\lf x\rf=\sup\|x_i\|^{1/i}$. This length plays an important role, as Guivarch established that $\lf x\rf$ is a good estimate for the word length of $\exp(x)$ in the simply connected nilpotent Lie group $G$ associated to $\g$. 

\subsection{Solid ideals}\label{s_solid}

We introduce here the notion of solid ideals, which will be useful when computing lower bounds on the systolic growth of simply connected nilpotent Lie groups.

Let $\g$ be a Lie algebra (over $K$). A $\mathbf{Q}$-structure on $\g$ is the data of a $\mathbf{Q}$-subspace $\mk{l}$ such that the canonical homomorphism $j:\mk{l}\otimes_\mathbf{Q} K\to\g$ is a linear isomorphism of Lie $K$-algebras. If $\mk{l}$ is a $\mathbf{Q}$-subalgebra, we call it a multiplicative $\mathbf{Q}$-structure, and then $j$ is a $K$-algebra isomorphism.

Given a $\mathbf{Q}$-structure $\mk{l}$, a $K$-subspace $V$ of $\g$ is called $\mathbf{Q}$-defined if it is generated as a $K$-subspace by $V\cap\mk{l}$. If $\g$ is a Lie $K$-algebra, we say that an ideal is solid if it is $\mathbf{Q}$-defined for every multiplicative $\mathbf{Q}$-structure.
The following properties are straightforward.

\begin{itemize}
\item A ideal contained and solid in a solid ideal is solid in the whole Lie algebra;
\item the inverse image of a solid ideal by the quotient by a solid ideal is solid;
\item the bracket of two solid ideals is solid:
\item the centralizer of a solid ideal is solid;
\item the intersection and the sum of two solid subalgebras are solid.
\end{itemize} 

For instance, if $\g$ is abelian, then the only solid ideals of $\g$ are $\{0\}$ and $\g$. 

To single out solid ideals in fd $\mathbf{Q}$-definable real nilpotent Lie algebras $\g$ is useful in view of the following: an ideal $V\subset\g$ is solid if and only if for every discrete cocompact subring $\Lambda$, the intersection $\Lambda\cap V$ is a lattice in $V$ (or equivalently, the projection on $\g/V$ is a lattice in $\g/V$). The reason is that the $\mathbf{Q}$-linear span of any discrete cocompact subring is a multiplicative $\mathbf{Q}$-structure, and that conversely any multiplicative $\mathbf{Q}$-structure contains a discrete cocompact subring. 

Solid ideals can almost be recognized using how they behave under the automorphism group.
Say that an ideal $I$ in $\g$ is absolutely $\Aut$-invariant if, denoting by $\bar{K}$ an algebraic closure of $K$ and $\Aut(\g)_{\bar{K}}$ for the group of automorphisms of the $\bar{K}$-algebra $\g\otimes_K\bar{K}$, the ideal $I\otimes_K\bar{K}$ is $\Aut(\g)_{\bar{K}}$-invariant.

\begin{thm}\label{t_solid}
Assume that $K$ is uncountable. Let $\g$ be a fd Lie algebra over $K$. Then
\begin{enumerate}
\item every solid ideal is invariant under $\Aut(\g)^0$ (or equivalently, stable under all $K$-linear self-derivations of $\g$);
\item every absolutely $\Aut$-invariant ideal is solid.
\end{enumerate}
\end{thm}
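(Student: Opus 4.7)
The plan is to translate both statements into algebro-geometric facts about a fixed-point or orbit in the Grassmannian $\text{Gr}(d,n)$, where $n=\dim\g$ and $d=\dim I$, and to exploit the uncountability of $K$ to force constancy of an otherwise a priori non-constant family.

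For part (1), fix a multiplicative $\mathbf{Q}$-structure $\mk{l}_0$. In characteristic zero, $\Aut(\g)^0$-invariance of a subspace is equivalent to stability under every $K$-linear derivation, so it suffices to show $D(I)\subseteq I$ for each $D\in\text{Der}(\g)$. Assume first that $D$ is nilpotent: then $\phi_t:=\exp(tD)=\sum_{k\ge 0}(tD)^k/k!$ is a polynomial family of Lie algebra automorphisms of $\g$ indexed by $t\in K$, and each $\phi_t(\mk{l}_0)$ is again a multiplicative $\mathbf{Q}$-structure. By solidity, $I$ is $\mathbf{Q}$-defined with respect to each $\phi_t(\mk{l}_0)$; equivalently, $\phi_t^{-1}(I)$ is $\mathbf{Q}$-defined with respect to $\mk{l}_0$ for every $t$. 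Hence the morphism $\Psi:\mathbf{A}^1_K\to\text{Gr}(d,n)$ sending $t\mapsto[\phi_t^{-1}(I)]$ has image in the countable set of $\mk{l}_0$-rational subspaces. Since $K$ is uncountable and the fibers of $\Psi$ are constructible subsets of $\mathbf{A}^1$, hence finite or cofinite, one fiber is Zariski dense; separatedness of the Grassmannian then forces $\Psi$ to be constant equal to $[I]$, so $\phi_t(I)=I$ for all $t$ and differentiating at $t=0$ gives $D(I)\subseteq I$. For a general derivation, use the Jordan decomposition $D=D_s+D_n$ (over $\bar K$, descending to $K$): treat $D_n$ as above, and treat $D_s$ by replacing $\mathbf{A}^1_K$ by the algebraic torus $T\subseteq\Aut(\g)$ generated by $D_s$, running the same uncountability/constancy argument for the morphism $T(K)\to\text{Gr}(d,n)$.

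For part (2), fix a multiplicative $\mathbf{Q}$-structure $\mk{l}$; via $\mk{l}$, both $\text{Gr}(d,n)$ and the algebraic group $\Aut(\g)$ are defined over $\mathbf{Q}$. Consider the fixed-point subscheme $F=\text{Gr}(d,n)^{\Aut(\g)}$, a closed $\mathbf{Q}$-subscheme. The hypothesis says $[I]\in F(\bar K)$, and since $[I]$ is $K$-rational, $[I]\in F(K)$. The aim is to prove $[I]\in F(\mathbf{Q})$, i.e.\ that $I$ is $\mathbf{Q}$-defined with respect to $\mk{l}$; this will follow once $F$ is shown to be a finite disjoint union of $\mathbf{Q}$-rational points. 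Finiteness rests on the classical observation that characteristic ideals of a finite-dimensional Lie algebra form a finite sublattice of the ideal lattice, being generated by intrinsic constructions such as centers, centralizers, derived and lower/upper central series, nilradical, radical, intersections, and sums. The $\mathbf{Q}$-rationality of each component reduces to the rigidity statement that $\text{Gal}(\bar{\mathbf{Q}}/\mathbf{Q})$ acts trivially on the finite set of absolutely $\Aut$-invariant ideals of $\g_{\bar{\mathbf{Q}}}$: for any $\sigma$, conjugation by the $\sigma$-semilinear map $\sigma:\g_{\bar K}\to\g_{\bar K}$ carries $\Aut(\g_{\bar K})$ to itself, so $\sigma(I)$ is again absolutely $\Aut$-invariant, and compatibility of the intrinsic constructions with base change forces $\sigma(I)=I$.

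The main obstacle will be the rigidity step in part (2). Part (1) is a clean uncountability plus constructibility argument, whose only technical nuisance is the treatment of general (not necessarily nilpotent) derivations via Jordan decomposition. Part (2) is genuinely more delicate: the finiteness of the characteristic-ideal lattice is classical, but the triviality of the Galois action on it, i.e.\ that every absolutely $\Aut$-invariant ideal of $\g_{\bar{\mathbf{Q}}}$ descends to a $\mathbf{Q}$-rational point of $F$, is the key rigidity input and will consume most of the work.
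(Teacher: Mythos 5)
Your proof of part (1) is correct but takes a different route from the paper. You reduce to individual derivations and treat the nilpotent and semisimple Jordan components separately, exponentiating the nilpotent part to a polynomial one-parameter family $\phi_t$ in $\Aut(\g)$ and invoking the torus generated by the semisimple part. The paper instead works directly with the orbit of $[I]$ under $H=\Aut(\g)^0$: if the stabilizer $L$ is proper, it shows $H(K)/L(K)$ has the cardinality of $K$ (Lemma~\ref{uncountable}, which rests on Borel--Springer $K$-unirationality), so uncountably many translates $hV$ cannot all be $\mathbf{Q}$-defined for a fixed countable $\mathbf{Q}$-structure. Your approach is more elementary for the nilpotent part (no unirationality needed for $\mathbf{A}^1$), but for the semisimple part you still need an uncountability statement for $T(K)$ with $T$ a possibly non-split $K$-torus, which is the same flavour of input as the paper's lemma; and you pay the extra cost of invoking the Jordan decomposition of derivations. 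Net effect: a correct argument of comparable difficulty, neither clearly simpler.

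Part (2) has a genuine gap. You reduce the claim to showing that the fixed-point subscheme $F=\mathrm{Gr}(d,n)^{\Aut(\g)}$ is a finite disjoint union of $\mathbf{Q}$-rational points, which you split into a \emph{finiteness} claim and a \emph{Galois-rigidity} claim, and you assert both rest on ``characteristic ideals being generated by intrinsic constructions (centers, centralizers, lower/upper central series, radicals, sums, intersections).'' Neither assertion is proved, and the second premise is not true in general: not every absolutely $\Aut$-invariant ideal is expressible by iterating such canonical operations, so ``compatibility with base change'' does not give you $\sigma(I)=I$. The finiteness of the set of absolutely $\Aut$-invariant ideals is also not a classical fact and is far from obvious (for instance, if $\Aut(\g)^0$ happens to act trivially on a $2$-dimensional characteristic central subspace, one gets a full $\mathbb{P}^1$ of invariant lines). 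And even granting finiteness, $F(K)\ne F(\mathbf{Q})$ for a finite $\mathbf{Q}$-scheme $F$ in general, so you would still need rigidity. The paper's proof avoids all of this with a short direct computation: fix the Galois group $\Xi=\mathrm{Gal}(\bar K/\mathbf{Q})$ and, for each multiplicative $\mathbf{Q}$-structure $W$, the associated semilinear $\Xi$-action $u_W$ on $\g_{\bar K}$. If $I$ is $\mathbf{Q}$-defined with respect to one multiplicative structure $W'$, then $I_{\bar K}$ is $u_{W'}(\Xi)$-invariant; for any other multiplicative structure $W$, the correction $u_W(\gamma)^{-1}u_{W'}(\gamma)$ is a $\bar K$-\emph{linear algebra} automorphism of $\g_{\bar K}$, so lies in $\Aut(\g)_{\bar K}$. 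Since $I_{\bar K}$ is $\Aut(\g)_{\bar K}$-invariant by hypothesis, it is also $u_W(\Xi)$-invariant, hence spanned by $I_{\bar K}\cap W$, hence $\mathbf{Q}$-defined for $W$. No finiteness, no fixed-point scheme, no classification of characteristic ideals is required. I recommend you replace your part (2) by this semilinear argument.
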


\begin{rem}
If $I$ is an ideal of $\g$, we have: $I$ absolutely Aut-invariant $\Rightarrow$ $I$ $\Aut(\g)$-invariant $\Rightarrow$ $I$ $\Aut(\g)^0$-invariant ($H^0$ denoting the connected component of the unit in the Zariski topology). The reverse implications do not hold in general. For instance, in $\mk{sl}_2(K)\times\mk{sl}_2(K)$, the ideal $\mk{sl}_2(K)\times\{0\}$ only satisfies the third condition. Also, over the reals, in $\mk{sl}_2(\mathbf{R})\times\mk{so}_3(\mathbf{R})$, the ideal $\mk{sl}_2(\mathbf{R})\times\{0\}$ only satisfies the latter two conditions (however, it is solid). Over the complex numbers, I do not know whether every solid ideal is $\Aut(\g)$-invariant.
\end{rem}

Theorem \ref{t_solid} follows from the next two propositions. Since the Lie algebras axioms plays no role here, we consider arbitrary algebras and the context could be even more general. Also, solid can be defined for arbitrary subspaces and we use this straightforward generalization. However, in a Lie algebra solid subspaces are always ideals (Corollary \ref{solsubide})

\begin{prop}
Let $\g$ be a fd $K$-algebra and $I$ a $K$-subspace, $\mathbf{Q}$-defined for at least one multiplicative $\mathbf{Q}$-structure; let $\bar{K}$ be an algebraically closed extension of $K$. If $I\otimes_K\bar{K}$ is $\Aut(\g)_{\bar{K}}$-invariant and $\mathbf{Q}$-defined for some multiplicative $\mathbf{Q}$-structure on the algebra $\g$, then $I$ is solid.
\end{prop}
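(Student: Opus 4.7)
The plan is to take an arbitrary multiplicative $\mathbf{Q}$-structure $\mk{l}$ and construct directly a $\mathbf{Q}$-subspace $J_0\subset\mk{l}$ with $J_0\otimes_\mathbf{Q} K=I$; this is exactly what is needed to show $I$ is $\mathbf{Q}$-defined by $\mk{l}$, and since $\mk{l}$ is arbitrary, it yields solidity. Fix the multiplicative $\mathbf{Q}$-structure $\mk{l}_0$ with respect to which $I$ is $\mathbf{Q}$-defined by hypothesis, and set $I_0:=I\cap\mk{l}_0$, a $\mathbf{Q}$-form of $I$ inside $\mk{l}_0$. Let $\bar{\mathbf{Q}}\subset\bar K$ denote the algebraic closure of $\mathbf{Q}$. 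The strategy is first to find a $\bar{\mathbf{Q}}$-rational avatar $J$ of $I$ inside $\mk{l}\otimes_\mathbf{Q}\bar{\mathbf{Q}}$, and then descend $J$ to $\mathbf{Q}$ via Galois invariance that will be forced by the $\Aut(\g)_{\bar K}$-invariance hypothesis.

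To build $J$, I would produce a $\bar{\mathbf{Q}}$-algebra isomorphism $\phi:\mk{l}_0\otimes_\mathbf{Q}\bar{\mathbf{Q}}\to\mk{l}\otimes_\mathbf{Q}\bar{\mathbf{Q}}$. Both sides are $\bar{\mathbf{Q}}$-forms of $\g_{\bar K}$ (since $\mk{l}_0\otimes_\mathbf{Q}\bar K=\g_{\bar K}=\mk{l}\otimes_\mathbf{Q}\bar K$), so the $\bar{\mathbf{Q}}$-scheme of $\bar{\mathbf{Q}}$-algebra isomorphisms between them has a $\bar K$-point; being of finite type over the algebraically closed field $\bar{\mathbf{Q}}$, it has a $\bar{\mathbf{Q}}$-point $\phi$. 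Extending $\phi$ by $\bar K$-scalars produces $\bar\phi\in\Aut(\g)_{\bar K}$, which preserves $I\otimes_K\bar K$ by the invariance hypothesis. Using $I\otimes_K\bar K=I_0\otimes_\mathbf{Q}\bar K$, this rewrites as $\phi(I_0\otimes\bar{\mathbf{Q}})\otimes_{\bar{\mathbf{Q}}}\bar K=I\otimes_K\bar K$ inside $\g_{\bar K}$. Setting $J:=\phi(I_0\otimes\bar{\mathbf{Q}})\subset\mk{l}\otimes\bar{\mathbf{Q}}$, a standard linear-algebra argument (a $\bar{\mathbf{Q}}$-subspace of $\mk{l}\otimes\bar{\mathbf{Q}}$ is recovered from its $\bar K$-extension by intersection with $\mk{l}\otimes\bar{\mathbf{Q}}$) gives $J=(I\otimes_K\bar K)\cap(\mk{l}\otimes_\mathbf{Q}\bar{\mathbf{Q}})$; in particular $J$ depends only on $I$ and $\mk{l}$, not on the choice of $\phi$.

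For the descent: given $\sigma\in\mathrm{Gal}(\bar{\mathbf{Q}}/\mathbf{Q})$, the twist $\phi^\sigma:=(1\otimes\sigma)\circ\phi\circ(1\otimes\sigma^{-1})$ is again a $\bar{\mathbf{Q}}$-algebra isomorphism $\mk{l}_0\otimes\bar{\mathbf{Q}}\to\mk{l}\otimes\bar{\mathbf{Q}}$, so the previous uniqueness forces $\phi^\sigma(I_0\otimes\bar{\mathbf{Q}})=J$; on the other hand, since $\sigma^{-1}$ fixes the $\mathbf{Q}$-subspace $I_0$ pointwise, the left-hand side equals $\sigma(J)$. Hence $\sigma(J)=J$ for every $\sigma$. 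Galois descent for $\mathbf{Q}$-vector spaces then yields $J_0:=J\cap\mk{l}\subset\mk{l}$ with $J_0\otimes_\mathbf{Q}\bar{\mathbf{Q}}=J$, and comparing $\bar K$-extensions via $(J_0\otimes_\mathbf{Q} K)\otimes_K\bar K=J\otimes_{\bar{\mathbf{Q}}}\bar K=I\otimes_K\bar K$ gives $J_0\otimes_\mathbf{Q} K=I$ as $K$-subspaces of $\g$, completing the proof.

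The main obstacle is the canonicity of $J$ in the second step: the isomorphism $\phi$ is highly non-unique (one can postcompose with any $\bar{\mathbf{Q}}$-algebra automorphism), but the full $\Aut(\g)_{\bar K}$-invariance hypothesis is precisely what forces $J=\phi(I_0\otimes\bar{\mathbf{Q}})$ to be independent of the choice of $\phi$. This canonicity is what makes the Galois-conjugation argument succeed and delivers $\mathbf{Q}$-definability, rather than merely $(K\cap\bar{\mathbf{Q}})$-definability, which is the best one would obtain from a more naive attempt to descend $I\otimes\bar K$ directly down to the common subfield of $K$ and $\bar{\mathbf{Q}}$.
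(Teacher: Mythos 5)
Your proof is correct, and it takes a genuinely different route from the paper's. The paper works directly with the (profinite) Galois group $\Xi=\mathrm{Gal}(\bar K/\mathbf{Q})$: each $\mathbf{Q}$-structure $W$ on $\g$ induces a $\gamma$-semilinear action $u_W(\gamma)$ of $\Xi$ on $\g_{\bar K}$, and the key observation is that for two \emph{multiplicative} $\mathbf{Q}$-structures $W,W'$ the ``ratio'' $u_W^{-1}(\gamma)u_{W'}(\gamma)$ is a $\bar K$-linear algebra automorphism, hence lies in $\Aut(\g)_{\bar K}$. Since $I_{\bar K}$ is $u_{W'}(\Xi)$-invariant (by hypothesis on $W'$) and $\Aut(\g)_{\bar K}$-invariant, it is $u_W(\Xi)$-invariant, and a Bourbaki lemma on semilinear Galois actions then shows $I_{\bar K}$ is spanned by $I\cap W$, giving $\mathbf{Q}$-definability for $W$. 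You instead pass through $\bar{\mathbf{Q}}$: you produce an explicit $\bar{\mathbf{Q}}$-algebra isomorphism $\phi$ between the two $\bar{\mathbf{Q}}$-forms by invoking that the Isom scheme is of finite type over the algebraically closed field $\bar{\mathbf{Q}}$ and is nonempty because it has a $\bar K$-point; you then use the $\Aut(\g)_{\bar K}$-invariance to make $J=\phi(I_0\otimes\bar{\mathbf{Q}})=I_{\bar K}\cap(\mk{l}\otimes\bar{\mathbf{Q}})$ canonical, so that the Galois-conjugate isomorphisms $\phi^\sigma$ give the same $J$, forcing $\mathrm{Gal}(\bar{\mathbf{Q}}/\mathbf{Q})$-stability, and finish by descent over $\mathbf{Q}\subset\bar{\mathbf{Q}}$. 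The two arguments use the $\Aut$-invariance hypothesis at the same moral place (it is what lets rationality information be transported between the two $\mathbf{Q}$-structures), but the paper's is more elementary and more self-contained, avoiding the Isom-scheme existence argument by exploiting the semilinear structure directly over $\bar K$; your version trades that for a cleaner-looking Galois descent over a number-field extension and makes the ``canonicity'' of the rational avatar of $I$ more explicit. Both are fine; just be aware that the extra algebro-geometric input (nonempty finite-type schemes over algebraically closed fields have rational points) is what your route requires and the paper's does not.
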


\begin{proof}
Let $\Xi$ be the Galois group of $\bar{K}$ over $\mathbf{Q}$. 

Let $V$ be a finite-dimensional $K$-vector space. Let $W\subset V$ be a $\mathbf{Q}$-structure in $V$. Then $\Xi$ acts coordinate-wise on $V_{\bar{K}}=V\otimes_K\bar{K}=W\otimes_\mathbf{Q}\bar{K}$ (for some choice of basis of $W$, whose choice does not matter).
We denote this action as $\gamma\cdot v=u_W(\gamma)v$ for $\gamma\in\Xi$. Then $u_W(\gamma)$ is a $\mathbf{Q}$-linear automorphism of $V_{\bar{K}}$; it is also $\gamma$-semi-linear, in the sense that $u_W(\gamma)(\lambda v)=\gamma(\lambda)u_W(\gamma)v$. It follows in particular that if $W'$ is another $\mathbf{Q}$-structure, then $\eta_{W,W'}(\gamma)=u_W^{-1}(\gamma)u_{W'}(\gamma)$ is a $\bar{K}$-linear automorphism of $V_{\bar{K}}$.

Now suppose that the action $u_W$ of $\Xi$ leaves a $\bar{K}$-subspace $J\subset V_{\bar{K}}$ invariant. This implies that $J$ is the $\bar{K}$-linear span of $J\cap W$ (see \cite[{\it corollaire} in Chap.\ V.4]{BouA}).

%
%
We apply this to $J=I_{\bar{K}}=I\otimes_K\bar{K}$. By assumption, $I$ is $\mathbf{Q}$-defined for some multiplicative $\mathbf{Q}$-structure $W'$. So $J$ is $u_{W'}(\Xi)$-invariant. It is also $\Aut(\g)(\bar{K})$-invariant, by assumption. Hence, by the first paragraph of the proof, it is $u_W(\Xi)$-invariant. So $I_{\bar{K}}$ is the $\bar{K}$-linear span of $J\cap W$. This means that $\dim_\mathbf{Q}(J\cap W)=\dim_{\bar{K}}(I_{\bar{K}})$. Since the latter equals $\dim_K(I)$ and since $J\cap W=I\cap W$, this in turn implies that $I$ is the $K$-linear span of $I\cap W$, that is, $I$ is $\mathbf{Q}$-defined. 
\end{proof}

\begin{prop}
Let $\g$ be a fd algebra over an uncountable field of characteristic zero, definable over $\mathbf{Q}$. Let $V$ be a subspace of $\g$. Suppose that $V$ is solid, that is, it is $\mathbf{Q}$-defined for every $\mathbf{Q}$-structure on $\g$. Then $V$ is invariant under $H=\Aut(\g)^0$, that is, when $K$ has characteristic zero, $V$ is stable under every derivation of $\g$.
\end{prop}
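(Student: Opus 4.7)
The plan is to argue by contradiction, exploiting the uncountability of $K$. Fix a multiplicative $\mathbf{Q}$-structure $\mk{l}\subset\g$. For any $h\in H(K)$, the image $h(\mk{l})$ is again a multiplicative $\mathbf{Q}$-structure, since $h$ is a $K$-algebra automorphism carrying a $\mathbf{Q}$-basis of $\g$ contained in $\mk{l}$ to one contained in $h(\mk{l})$. Moreover, $V$ is $\mathbf{Q}$-defined with respect to $h(\mk{l})$ if and only if $h^{-1}(V)$ is $\mathbf{Q}$-defined with respect to $\mk{l}$. Applying solidity of $V$ to each $h(\mk{l})$, I conclude that the entire orbit $H(K)\cdot V$ in $\mathrm{Gr}_d(\g)(K)$ (with $d=\dim_KV$) lies in the subset of $K$-subspaces that are $\mathbf{Q}$-defined with respect to $\mk{l}$; this subset is in bijection with $\mathrm{Gr}_d(\mk{l})(\mathbf{Q})$ and is therefore countable.

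Now suppose for contradiction that $V$ is not $H$-invariant. Then the orbit morphism $\phi:H\to\mathrm{Gr}_d(\g)$, $h\mapsto h(V)$, is a nonconstant $K$-morphism of irreducible $K$-varieties whose image has positive dimension. The key input is that every connected algebraic group over a field of characteristic zero is unirational, so there exists a dominant $K$-morphism $\psi:\mathbf{A}^N_K\to H$. Restricting $\phi\circ\psi$ to a generic affine $K$-line $\mathbf{A}^1_K\hookrightarrow\mathbf{A}^N_K$ on which the composition is nonconstant produces a nonconstant $K$-morphism $f:\mathbf{A}^1_K\to\mathrm{Gr}_d(\g)$. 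For each $y$ in its image, the fiber $f^{-1}(y)$ is a proper closed subscheme of $\mathbf{A}^1_K$ and so contains only finitely many $K$-points; as $K$ is uncountable, $f(K)$ must then be uncountable (otherwise $K$ would be a countable union of finite sets). This yields uncountably many distinct elements of $H(K)\cdot V$, contradicting the countability established above.

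To translate $H$-invariance into stability under derivations, I would invoke the standard identification $\mathrm{Lie}(\Aut(\g)^0)=\mathrm{Der}(\g)$, together with the fact that, for an action of a connected algebraic group on a finite-dimensional $K$-vector space in characteristic zero, a subspace is group-stable if and only if it is stable under the Lie algebra. The main obstacle is guaranteeing enough $K$-points on the orbit: over a non-algebraically-closed field, a positive-dimensional irreducible variety may have very few $K$-points, so the pigeonhole-style contradiction would collapse. Unirationality of connected algebraic groups in characteristic zero is exactly the tool that supplies a polynomial one-parameter family of $K$-automorphisms along which $V$ moves, reducing matters to the elementary case of a nonconstant polynomial map from $\mathbf{A}^1_K$.
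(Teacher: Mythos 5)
Your argument is correct and follows essentially the same strategy as the paper: identify the solidity of $V$ as forcing the orbit $H(K)\cdot V$ to land in the countable set of $\mathbf{Q}$-defined subspaces, then use unirationality of $H$ over the uncountable characteristic-zero field $K$ to extract a one-parameter $K$-family along which $V$ moves, producing an uncountable orbit and hence a contradiction. Your route through the Grassmannian rather than through $H(K)/L(H)(K)$ as in the paper's Lemma on uncountability is a minor presentational variant (and conveniently sidesteps the subtlety the paper flags about $H(K)/L(K)\to(H/L)(K)$ failing to be surjective), but the substance is the same.
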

\begin{proof}
Let $L\subset H$ be the stabilizer of $V$. If $L\neq H$, then $H(K)/L(K)$ is uncountable: if $K$ is $\mathbf{R}$ or $\mathbf{C}$ this is because it is a manifold of positive dimension; in general see Lemma \ref{uncountable}. The map $hL\to hV$ from $H(K)/L(K)$ to the set of subspaces of $\g$ being injective, it has an uncountable image. So, given $\mathbf{Q}$-structure $\g_\mathbf{Q}$, there exists $h\in H(K)$ such that $hV$ is not $\mathbf{Q}$-defined. Accordingly, for the new $\mathbf{Q}$-structure defined by $h^{-1}\g_\mathbf{Q}$, $V$ is not $\mathbf{Q}$-defined, contradicting that $V$ is solid.
\end{proof}


\begin{lem}\label{uncountable}
Let $H$ be a connected linear algebraic group defined over an infinite perfect field $K$, and $L$ a $K$-closed proper subgroup. Then $H(K)/L(K)$ is has the same cardinality as $K$.
\end{lem}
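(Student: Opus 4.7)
The plan is to bound $|H(K)/L(K)|$ above by $|K|$ by simply embedding $H\subset\mathrm{GL}_n$ (whence $H(K)\subset K^{n^2}$, of cardinality at most $|K|^{n^2}=|K|$ since $K$ is infinite), and then to produce $|K|$ elements of $H(K)$ lying in pairwise distinct $L(K)$-cosets. To organize the latter, I would first observe that $H(K)/L(K)$ injects into $(H/L)(K)$: if $h_1,h_2\in H(K)$ satisfy $h_1^{-1}h_2\in L$, then $h_1^{-1}h_2\in L(\bar{K})\cap H(K)=L(K)$ since $L$ is $K$-closed. So it is enough to exhibit $|K|$ distinct points in the image of the quotient map $\pi\colon H(K)\to(H/L)(K)$, where $H/L$ is a smooth quasi-projective $K$-variety of dimension $d=\dim H-\dim L\ge 1$.

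The strategy to produce these is to find a nonconstant $K$-morphism $\phi\colon W\to H/L$, defined on a dense open $W\subset\mathbf{A}^1_K$, that factors through $\pi$: a nonconstant $K$-morphism from an irreducible smooth curve has finite fibers, so $|\phi(W(K))|=|W(K)|=|K|$, and each point of the image lifts to some $h\in H(K)$ by construction, giving the required $|K|$ cosets.

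To construct such $\phi$, I would invoke the $K$-unirationality of connected linear algebraic groups over perfect fields (Borel, \emph{Linear Algebraic Groups}, Theorem 18.2): there exist a nonempty open $U\subset\mathbf{A}^n_K$ and a dominant $K$-morphism $f\colon U\to H$. Then $\pi\circ f$ is dominant onto $H/L$, so its differential is surjective at a generic point. Since $U$ is smooth and $K$ is infinite, $U(K)$ is Zariski dense in $U$, and I can pick $u_0\in U(K)$ at which $d(\pi\circ f)_{u_0}\colon K^n\to T_{\pi f(u_0)}(H/L)$ is surjective, and then $v\in K^n$ on which this linear map is nonzero. Taking $W=\{t\in K:u_0+tv\in U\}$ (cofinite in $K$) and $\phi(t)=\pi(f(u_0+tv))$, the derivative of $\phi$ at $t=0$ is nonzero by construction, so $\phi$ is nonconstant and lifts through $\pi$ via $t\mapsto f(u_0+tv)$.

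The only nontrivial step is the appeal to $K$-unirationality, which is a classical but nontrivial result; the remainder of the argument is essentially bookkeeping on fiber dimensions of morphisms of varieties. The hard part, if any, will be insisting on a self-contained treatment avoiding that classical input---in characteristic zero one could substitute the exponential of a suitable $K$-rational one-parameter subgroup of $H$ not contained in $L$, but producing such a subgroup in general requires comparable work.
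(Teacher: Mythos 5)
Your proof is correct and shares the same essential input (the $K$-unirationality of $H$ over a perfect field) and the same idea of restricting to a line through a rational point in the affine parameter space, but it is organized differently from the paper's argument. You work on the quotient side: injectivity of $H(K)/L(K)\hookrightarrow(H/L)(K)$ plus a nonconstant $K$-morphism $\phi\colon W\to H/L$ (from a cofinite open $W\subset\mathbf{A}^1$) that factors through $H(K)$ and hence hits $|K|$ distinct cosets. The paper instead stays inside $H$: it constructs a $K$-rational curve $C\subset H$ through $1$ not contained in $L$, and then studies the equivalence relation $x\sim y\iff x^{-1}y\in L$ directly on $C$, observing that $\sim$ is a proper closed subvariety of $C\times C$ containing no horizontal layer, so its classes are finite; no differential computation is needed. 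This makes the paper's argument a bit more robust in positive characteristic: your step ``dominant implies the differential is generically surjective'' is exactly the separability of $\pi\circ f$, which does hold here (over a perfect field $L$ is smooth, so $H\to H/L$ is smooth, and the unirationality parametrization can be taken separable), but it is worth flagging since it is the only point where characteristic $p$ could bite; the paper sidesteps it at the small cost of silently using L\"uroth's theorem (over $K$, any characteristic) to know that the closure of $f(D\cap U)$ is $K$-birational to $\mathbb{P}^1$. A small imprecision in your write-up: $|\phi(W(K))|=|W(K)|$ is not literally right since $\phi$ may identify some rational points; the correct statement is that the fibers of $\phi$ have uniformly bounded cardinality (bounded by $[K(W):K(\overline{\phi(W)})]$), which still gives $|\phi(W(K))|=|K|$.
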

\begin{proof}
(Beware that the canonical injective map $H(K)/L(K)\to (H/L)(K)$ can fail to be surjective, so it is not enough to compute the cardinal of the latter.)

Clearly the cardinal of $H(K)$ is bounded above by that of $K$ (as soon as $K$ is infinite), so we have to prove the other inequality.

We first take for granted that there exists a $K$-closed curve $C$ in $H$, $K$-birational to $\mathbb{P}^1$, such that $C\nsubseteq L$ and $1\in C$. This granted, let us conclude (without the perfectness restriction on $K$). 

On $C$, we consider the equivalence relation $x\sim y$ if $x^{-1}y\in L$. This is a closed subvariety of $C\times C$, and does not contain any layer $C\times\{y_0\}$ since $y_0\in L$ and then $C\subset L$ would follow. So equivalence classes are finite. Since $C$ is $K$-birational to $\mathbb{P}^1$, its cardinal is the same as $K$, and then its image in the quotient $H(K)/L(K)$ being the quotient of $C(K)$ by the equivalence relation $\sim$ with finite classes, it also has at least the cardinal of $K$.

To justify the existence of $C$, we can argue that $H$ is $K$-unirational (this uses that $K$ is perfect), as established in \cite[Corollary 7.12]{BoS}; consider a dominant $K$-defined morphism $f:U\to H$, with $U$ open in the affine $d$-space $\mathbb{A}^d$. Conjugating with translations both in $\mathbb{A}^d$ and $H$, we can suppose that $0\in U$ and $f(0)=1$. Since $U(K)$ is Zariski-dense and $f$ is dominant, $f(U)$ is Zariski-dense, and hence it contains a point $f(x)\notin L$ for some $x\in U$. Let $D\subset\mathbb{A}^d$ be the line through $x$. Then the Zariski closure of $f(D\cap U)$ is the desired curve.
\end{proof}

\begin{cor}\label{solsubide}
In a fd Lie algebra over a field of characteristic 0, every solid subspace is an ideal.
\end{cor}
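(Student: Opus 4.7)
The plan is to combine the previous proposition with the observation that inner derivations are derivations. That proposition shows (over an uncountable base field) that any solid subspace $V \subset \g$ is $\Aut(\g)^0$-invariant; in characteristic zero the Lie algebra of $\Aut(\g)^0$ is exactly $\mathrm{Der}(\g)$, so $V$ is stable under every $K$-linear derivation of $\g$. For each $x \in \g$ the inner derivation $\mathrm{ad}_x = [x,\cdot\,]$ belongs to $\mathrm{Der}(\g)$, so $[x, V] \subseteq V$; varying $x$ gives $[\g, V] \subseteq V$, which is exactly the ideal property.

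To remove the uncountable hypothesis carried by the preceding proposition, I would fix an uncountable algebraically closed extension $\Omega \supseteq K$ (which exists for any characteristic-zero $K$) and set $\g_\Omega := \g \otimes_K \Omega$ and $V_\Omega := V \otimes_K \Omega$. Provided $V_\Omega$ remains solid in $\g_\Omega$, the uncountable case applied over $\Omega$ yields that $V_\Omega$ is an ideal of $\g_\Omega$, whence
\[
[\g, V] \subseteq [\g_\Omega, V_\Omega] \cap \g \subseteq V_\Omega \cap \g = V,
\]
so $V$ is an ideal of $\g$ as well.

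The main technical point is therefore to verify that $V_\Omega$ is solid in $\g_\Omega$. Let $\mk{m}$ be a multiplicative $\mathbf{Q}$-structure of $\g_\Omega$; only finitely many elements of $\Omega$ appear when a $\mathbf{Q}$-basis of $\mk{m}$ is expressed in a fixed $K$-basis of $\g$, so $\mk{m}$ already lies in $\g_{K_1}$ and is a multiplicative $\mathbf{Q}$-structure there, for some finitely generated extension $K_1 \subseteq \Omega$ of $K$. The condition that $V_\Omega$ be $\mathbf{Q}$-defined for $\mk{m}$ is equivalent to $\dim_\mathbf{Q}(V_{K_1} \cap \mk{m}) = \dim_K V$. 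I would then specialize the transcendental parameters of $K_1/K$ at a Zariski-dense set of $K$-rational points $\phi$; for generic $\phi$ the image $\phi(\mk{m})$ is still a multiplicative $\mathbf{Q}$-structure of $\g$, and solidity of $V$ in $\g$ over $K$ gives $\dim_\mathbf{Q}(V \cap \phi(\mk{m})) = \dim_K V$. Since $\phi$ restricts to a $\mathbf{Q}$-linear isomorphism $\mk{m} \to \phi(\mk{m})$ carrying $V_{K_1} \cap \mk{m}$ into $V \cap \phi(\mk{m})$, a dimension count transfers the equality back to $\mk{m}$.

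The main obstacle I expect is this specialization step: verifying that generic $K$-rational specializations preserve both multiplicativity and the $\mathbf{Q}$-rank of $\mk{m}$, and that the $\mathbf{Q}$-dimension of $V_{K_1} \cap \mk{m}$ is correctly recovered from its generic specializations. The remaining reductions—to stability under inner derivations and to intersecting $V_\Omega$ with $\g$—are formal.
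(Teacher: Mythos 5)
Your first paragraph reproduces the paper's proof exactly: apply the preceding proposition to obtain $\Aut(\g)^0$-invariance, differentiate to get stability under $\mathrm{Der}(\g)$, and conclude via the inner derivations $\mathrm{ad}_x$. The paper's proof of the corollary is equally terse and, like yours, simply imports the $\Aut(\g)^0$-invariance from the proposition without revisiting that proposition's hypotheses ($K$ uncountable, $\g$ definable over $\mathbf{Q}$).

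Your remaining two paragraphs, which try to remove the uncountability hypothesis by ascending to an uncountable extension $\Omega$, hit a wall that cannot be circumvented, because the statement is genuinely false without restricting $K$. Over $K=\mathbf{Q}$, the only $\mathbf{Q}$-structure on $\g$ is $\g$ itself, so \emph{every} subspace is vacuously solid, and non-ideal subspaces abound in any non-abelian $\g$. In particular the implication ``$V$ solid in $\g$ over $K$ $\Rightarrow$ $V_\Omega$ solid in $\g_\Omega$'' fails: $\g_\Omega$ acquires a profusion of new multiplicative $\mathbf{Q}$-structures that have no reason to respect $V_\Omega$. The specialization step also runs the inequality in the wrong direction: the restriction of $\phi$ to $\mk{m}$ carries $V_{K_1}\cap\mk{m}$ \emph{into} $V\cap\phi(\mk{m})$, which only yields $\dim_\mathbf{Q}(V_{K_1}\cap\mk{m})\le\dim_\mathbf{Q}(V\cap\phi(\mk{m}))=\dim_K V$, the trivial bound; the inclusion can be strict (an element $w\in\mk{m}\setminus V_{K_1}$ can specialize into $V$ for a $\phi$ tuned to $V$), and you need the reverse. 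The corollary must be read as inheriting the hypotheses of the preceding proposition; under those, your first paragraph alone is a complete and correct proof, and the other two paragraphs should be dropped.
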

\begin{proof}
We have to show that if $V$ is $\Aut(\g)_0$-invariant then it is an ideal. Indeed, differentiation implies that $V$ is invariant under derivations of $\g$, and this includes inner derivations. Since left and right multiplications are derivations, this finishes the proof.
\end{proof}

\begin{ex}[A solid complete flag in a filiform Lie algebra of dimension $\ge 4$] \label{flagfili}
Recall that, for $d\ge 2$, a filiform Lie algebra denotes a $d$-dimensional nilpotent Lie algebra whose nilpotency length is exactly $d-1$, which is the largest possible value. Such a Lie algebra $\g$ admits a basis $(e_1,\dots,e_d)$ such that, denoting by $\g_{\ge i}$ the subspace with basis $(e_i,\dots,e_d)$, we have, for all $i\in\{2,\dots,d\}$, $\g^i=\g_{\ge i+1}$.  In addition, if $d\ge 4$, we can arrange to choose $[e_1,e_2]=e_3$ and $[e_1,e_3]=e_4$. This being assumed, each $\g_{\ge i}$ is a solid ideal. Indeed, for $i\neq 2$, this is because it is a term of the lower central series. For $i=2$, this is because it is the centralizer of $\g^2(=\g^{\ge 3})$ modulo $\g^4(=\g_{\ge 5})$. 
\end{ex}

\section{Systolic growths: facts and bounds}

\subsection{Asymptotic comparison}\label{s_asy}
Given functions $f,g$ (of a positive real variable $r$), we write $f\preceq g$ if $f(r)\le Cg(C'r)+C''$ for some constants $C,C',C''>0$ and all $r$. We say that $f$ and $g$ are $\simeq$-equivalent and write $f\simeq g$ if $f\preceq g\preceq f$. Also, we write $f\ll g$ if $g/(|f|+1)\to +\infty$ (usually $\underline{\lim}_{r\to\infty}f>0$, in which case this just means $g/f\to +\infty$).

\subsection{Residual girth}\label{s_regi}

The residual girth is defined in the same way as the systolic growth, but allowing only normal subgroups.

Let $G$ be a simply connected nilpotent Lie group, of dimension $d$ and nilpotency length $c$ with a lattice $\Gamma$. We can find an embedding of $G$ into the group of upper triangular unipotent real matrices, mapping $\Gamma$ into integral matrices. Then congruence subgroups (the kernel of reduction modulo $n$, in restriction to $\Gamma$) have index $\simeq n^d$ and systole $\simeq n^{1/c}$. This yields for $\Gamma$ the polynomial upper bound on the residual girth $\preceq r\mapsto r^{cd}$; this simple observation was made independently in \cite{BS,gralie}. As observed in \cite{gralie}, in the case of the 3-dimensional Heisenberg group, it is sharp: the residual girth of every lattice is $\simeq r^6$. In \cite{BS}, it is shown that, more generally, the residual girth of every lattice is indeed $\simeq r^{cd}$ when the center of $G$ coincides with the $c$-th term of the central series. Otherwise the picture is not completely clear. 

The above provide an easy polynomial upper bound on the systolic growth of $\Gamma$ and of $G$ are also $\preceq r^{cd}$. Nevertheless, we will not concentrate further on the residual girth $\sigma_\Gamma^\lhd$, inasmuch as its asymptotic behavior is usually much larger than the systolic growth $\sigma_\Gamma$: as soon as $G$ is non-abelian, $\sigma_\Gamma(r)\ll r^{cd}$, and it is also very likely that $\sigma_\Gamma\ll\sigma_\Gamma^\lhd$ always holds.

%

\subsection{Systolic growths}\label{s_usg}
Another notion, also related with conjugacy phenomena, but more closely related to the systolic growth, is the uniform systolic growth introduced in \cite{gralie}: we define it now.

Let $H$ be an arbitrary compactly generated locally compact group. We use the standard natural convention $\inf\emptyset=+\infty$. In the setting we will study more deeply (compactly generated locally compact nilpotent groups), all lattices are cocompact. So we stick to cocompact lattices, but in general it would make sense to consider the analogous notion allowing arbitrary lattices.

In the setting of Definition \ref{d_sy}, one can define the uniform systole, or $H$-uniform systole, of $X\subset H$ as the infimum
\[\inf_{h\in H}\mathrm{sys}(hXh^{-1})=\inf\{|hxh^{-1}|:\;h\in H,x\in X\smallsetminus\{1\}.\]

The uniform (or $H$-uniform) systolic growth of $H$ is then defined as the function mapping $r>0$ to the infimum $\sigma^u(r)\in\mathbf{R}_+\cup\{+\infty\}$ of covolumes of cocompact lattices of $H$ with uniform systole $\ge r$.

%
%
%
%

Given a cocompact lattice $\Gamma$ in $H$, we can consider the uniform systolic growth $\sigma_\Gamma$ of $\Gamma$ (computed within $\Gamma$) and the uniform systolic growth $\sigma_H$ of $H$. But while $\sigma_H\preceq\sigma_\Gamma$, a similar estimate for the uniform systolic growths might fail because for a finite index subgroup of $\Gamma$, the $H$-uniform systole can be much smaller than the $\Gamma$-uniform systole (see Example \ref{e_unisy}, in the Heisenberg group). Hence another notion naturally appears:
the $H$-uniform systolic growth $\sigma^u_{\Gamma,H}$ of $\Gamma$, considering the uniform systole computed in $H$ but finite index subgroups of $\Gamma$. At this point we have a bunch of growths, which we summarize now (up to asymptotic equivalence, allowing to not specify the choice of lengths): each maps $r\ge 0$ to the smallest covolume of a cocompact lattice of $H$ with the additional conditions:
\begin{itemize}
\item (systolic growth $\sigma_H$ of $H$): of systole $\ge r$;
\item (systolic growth $\sigma_\Gamma$ of $\Gamma$: contained in $\Gamma$, of systole $\ge r$;
\item (uniform systolic growth $\sigma^u_H$ of $H$): of $H$-uniform systole $\ge r$
\item (uniform systolic growth $\sigma^u_\Gamma$ of $\Gamma$): contained in $\Gamma$, of $\Gamma$-uniform systole $\ge r$
\item ($H$-uniform systolic growth $\sigma^u_{\Gamma,H}$ of $\Gamma$): contained in $\Gamma$, of $H$-uniform systole $\ge r$.
\end{itemize}

with asymptotic inequalities
\[\sigma_H\preceq\sigma_\Gamma\preceq\sigma_\Gamma^u\preceq\sigma_{\Gamma,H}^u;\quad   \sigma_H\preceq\sigma_H^u\preceq\sigma^u_{\Gamma,H}.\]

(Let us also mention that all these functions are $\preceq\sigma^\lhd_\Gamma$: the only nontrivial case is that of $\sigma^u_{\Gamma,H}$, and follows from the fact that for a subgroup $\Lambda$ normalized by a fixed cocompact lattice $\Gamma$, the $H$-normal systole is bounded by the $\Gamma$-uniform systole plus a constant depending only on $\Gamma$, independently of $\Lambda$.)

In the context when $H=G$ is a simply connected nilpotent Lie group, all these functions take finite values and are asymptotically bounded above by the residual girth of $\Gamma$, and in particular are polynomially bounded by $r\mapsto r^{c\dim(G)}$, a better upper bound (implying $\preceq r^{c\dim(G)/2}$ is provided in Proposition \ref{ineq}. In this context, we do not know if all these five functions have the same asymptotic behavior (Question \ref{q_GamG}). In various cases  where we prove an upper bound on the systolic growth, by constructing an explicit sequence of lattices, we actually provide an upper bound on $\sigma_{\Gamma,G}^u$ and therefore on all others.

\begin{rem}\label{riemannian}Most of these functions can be interpreted in the geometry of $G/\Gamma$. More precisely, endow $G$ with a right-invariant Riemannian metric, which thus passes to the quotient $G/\Gamma$, as well as its covering $G/\Lambda$ when $\Lambda$ is a finite index subgroup of $\Gamma$. Then $\Gamma$ can naturally be identified to $\pi_1(G/\Gamma)$ by a bijection $\gamma\mapsto j_\gamma$ (the base-point is meant to be the obvious one). The length of $\gamma\in\Gamma$ is equivalent to the length in $X=G/\Gamma$ of a smallest representative based loop of $j_\gamma$. Its $G$-uniform length is equivalent to the infimum of lengths of arbitrary loops in the free homotopy class of $j_\gamma$. Therefore, the systole (resp.\ $G$-uniform systole) of $\Lambda$ is equivalent  the smallest size of a based loop (resp.\ of a loop) in $X$ not homotopic to a point. Call the latter the geometric based systole, resp.\ geometric systole, of $X$ (classically, the word ``geometric" is dropped, since these notions come from Riemannian geometry!). The geometric based systolic growth, resp.\ geometric systolic growth, of $X$ is defined as the function mapping $r$ to the smallest degree of a covering of $X$ with geometric based systole, resp.\ systole $\ge r$. Thus the geometric based systolic growth of $X$ coincides with the systolic growth of $\Gamma$, and the geometric systolic growth of $X$ coincides with the $G$-uniform systolic growth of $\Gamma$.
\end{rem}

\begin{ex}\label{e_unisy}
In the 3-dimensional real Heisenberg group $G$, let us write, for the sake of shortness, $M(a,b,c)=\begin{pmatrix}1 & a & c\\ 0 & 1 & b\\ 0 & 0 & 1\end{pmatrix}$. We write $|M(a,b,c)|=|a|+|b|+\sqrt{|c|}$; we use this approximation of the distance to the origin to compute systoles.

Let $\Gamma_n$ be the subgroup generated by the matrices $x_n=M(1,0,n)$, $y_n=M(0,n^2,0)$; this is a lattice. We claim that its $G$-uniform systole is 1 while its $\Gamma_n$-uniform systole is $\ge\sqrt{n}$.

Let us describe $\Gamma_n$. Define $z_n=x_ny_nx_n^{-1}y_n^{-1}$. Then $z_n=M(0,0,n^2)$ and elements of $\Gamma_n$ are precisely those $x_n^ay_n^bz_n^c$ when $(a,b,c)$ ranges over $\mathbf{Z}^3$. We see that \[x_n^ay_n^bz_n^c=M(a,bn^2,n(a+(ab+c)n)).\] 

In $G$, $x_n$ is conjugate to $M(1,0,t)$ for every real $t$, and hence the $G$-systole of $\Gamma_n$ is equal to 1. 

Let us compute the $\Gamma_n$-uniform systole. Consider $(a,b,c)\in\mathbf{Z}^3\smallsetminus\{0\}$ and consider $g=x_n^ay_n^bz_n^c$, and $g'$ any conjugate of $g$ (for the moment, a $G$-conjugate). 
\begin{itemize}
\item If $(a,b)=(0,0)$, then $c\neq 0$ and $g=M(0,0,cn^2)$ is central, so $|g'|=|g|=\sqrt{|c|}n\ge n$;
\item if $b\neq 0$, then $|g'|\ge |a|+|b|n^2\ge n^2$; 
\item if $b=0$ and $a\neq 0$, then $g=x_n^az_n^c=M(a,0,n(a+cn))$. We discuss
	\begin{itemize}
	\item if $|a|\ge\sqrt{n}$, then $|g'|\ge\sqrt{n}$;
	\item if $|a|<\sqrt{n}$, we now assume that $g'$ is a $\Gamma_n$-conjugate of $g$, namely by some element $M(*,dn^2,*')$ with $d\in\mathbf{Z}$ (we do not have to care about the coefficients denoted by stars); this gives \[g'=M(a,0,n(a-(ad-c)n)).\] If by contradiction $|g'|<\sqrt{n}$, then $a-(ad-c)n=0$, but since $|a|<n$ this implies $a=0$, a contradiction. So $|g'|\ge\sqrt{n}$.
	\end{itemize}
\end{itemize}
We conclude that in all cases $|g'|\ge\sqrt{n}$, so the $\Gamma_n$-systole of $\Gamma_n$ is $\ge\sqrt{n}$.
\end{ex}

\section{Algebraization of the systolic growth}\label{secsyg}

The purpose of this section is to describe the various systolic notions in terms of discrete cocompact subrings of the Lie algebra, instead of lattices in the Lie group. While the exponential of a discrete cocompact subring can fail to be a subgroup and the logarithm of a lattice can fail to be an additive subgroup or fail to be stable under taking brackets, the correspondence is true up to bounded index. This is the contents of the following lemma.

\begin{lem}[Folklore]\label{llazard}
Let $\g$ be a real fd nilpotent Lie algebra and $G$ the corresponding simply connected nilpotent Lie group. There exists $C\ge 1$ depending only on $\dim(\g)$ such that:
\begin{enumerate}
\item\label{llaz1} for every cocompact discrete subring $\Lambda$ in $\g$, there exist lattices $\Gamma_1,\Gamma_2$ in $G$ with $\Gamma_1\subset\exp(\Lambda)\subset\Gamma_2$ and $[\Gamma_2:\Gamma_1]\le C$.
\item\label{llaz2} for every lattice $\Gamma$ in $G$, there exist cocompact discrete subrings $\Lambda_1,\Lambda_2$ in $\g$ with $\Lambda_1\subset\log(\Gamma)\subset\Lambda_2$ and $[\Lambda_2:\Lambda_1]\le C$.
\end{enumerate}
\end{lem}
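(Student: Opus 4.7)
\smallskip
\noindent\emph{Proof plan.}
The plan is to transfer everything to $\g$ via the Baker--Campbell--Hausdorff (BCH) formula and to produce the required lattices (resp.\ subrings) by a direct rescaling. Since $\g$ is $c$-step nilpotent with $c\le\dim\g$, the BCH series
\[x*y\;=\;x+y+\tfrac12[x,y]+\tfrac1{12}\bigl([x,[x,y]]-[y,[x,y]]\bigr)+\cdots\]
truncates to a polynomial of degree $\le c$ whose rational coefficients have denominators dividing some integer $N=N(\dim\g)$; symmetrically, the formula expressing $+$ and $[\,,\,]$ in terms of iterated $*$-products has denominators dividing some $N'=N'(\dim\g)$. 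I will identify $G$ with $(\g,*)$ so that $\exp$ becomes the identity map on the underlying set; subgroups of $G$ are then precisely the $*$-closed subsets of $\g$.

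The technical core is a simple rescaling observation. Choose $M=M(\dim\g)$ large enough so that $N\mid M^{k-1}$ for every $k\le c$ (one can take $M=N^{c-1}$). Then for any subring $\Lambda$ of $\g$, the sublattice $M\Lambda$ is closed under $*$: a BCH term attached to a Lie word $\omega$ of length $k$, evaluated on elements $Mx,My$ with $x,y\in\Lambda$, equals $\lambda_\omega M^k\omega(x,y)$ with $\omega(x,y)\in\Lambda$, and $\lambda_\omega M^k=M\cdot(\lambda_\omega M^{k-1})\in M\mathbf{Z}$ by the choice of $M$. Symmetrically, for a suitable $M'=M'(\dim\g)$, the dilate $M'\Gamma$ is a subring of $\g$ whenever $\Gamma$ is a $*$-subgroup.

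To prove (\ref{llaz1}) I will take $\Gamma_1:=M\Lambda$, a $*$-subgroup (hence a lattice) contained in $\Lambda$. For the outer lattice, rather than taking the $*$-subgroup generated by $\Lambda$ (whose discreteness is not immediate), I will produce $\Gamma_2$ as a $\mathbf{Z}$-form of the rational Lie algebra $L:=\mathbf{Q}\Lambda\subset\g$ that is both a subring and a $*$-subgroup and contains $\Lambda$ with bounded index; such a $\mathbf{Z}$-form exists by Malcev's theorem applied to $L$, combined with the bounded-denominator control supplied by BCH. The inclusions $\Gamma_1\subset\exp(\Lambda)\subset\Gamma_2$ hold by construction, and $[\Gamma_2:\Gamma_1]=[\Gamma_2:\Lambda]\cdot[\Lambda:\Gamma_1]\le M'^{\dim\g}\cdot M^{\dim\g}$, a quantity depending only on $\dim\g$. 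Assertion (\ref{llaz2}) will then follow by the symmetric argument: given a lattice $\Gamma$, take $\Lambda_2$ to be the smallest subring of $\g$ containing $\Gamma$, which is cocompact discrete by the inverse-BCH formula with denominators bounded by $N'$, and $\Lambda_1:=M\Lambda_2$.

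The main obstacle is to ensure that iterated BCH products do not cause an uncontrolled denominator blow-up: naive iteration accumulates a factor of $N$ at each bracket, which would destroy uniformity of $C=C(\dim\g)$. The remedy is to argue layer by layer along the lower central series $\g=\g^1\supset\g^2\supset\cdots$, using that each $\g^i$ is a solid ideal (so $\Lambda\cap\g^i$ is always a lattice in $\g^i$) and that passing through each successive quotient only involves one additional level of bracket. This confines the denominator inflation to one factor of $N$ per layer and yields a final constant $C$ of order $N^{O(\dim\g)}$.
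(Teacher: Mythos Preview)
Your rescaling observation---that $M\Lambda$ is $*$-closed once $N\mid M^{k-1}$ for all $k\le c$---is correct and gives a clean, explicit construction of the inner lattice $\Gamma_1$. But the proposal has a genuine gap on the \emph{outer} side, and the ``symmetric'' claim is false as stated.

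Concretely: the assertion that ``the dilate $M'\Gamma$ is a subring of $\g$ whenever $\Gamma$ is a $*$-subgroup'' fails. The set $M'\Gamma=\{M'g:g\in\Gamma\}$ is additively closed only if $\Gamma$ itself is, and a lattice need not be: already in the Heisenberg group, $\log$ of the integer lattice is not an additive subgroup, and no dilation repairs this. The inverse-BCH formula expresses $x+y$ as a $*$-product of \emph{rational powers} of group words in $x,y$, not as an element of $\Gamma$; so it does not give closure of any dilate.

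More importantly, producing $\Gamma_2\supset\Lambda$ (resp.\ $\Lambda_2\supset\log\Gamma$) with index bounded \emph{uniformly in $\Lambda$} is precisely the hard part. You identify the obstacle correctly, but neither proposed remedy is a proof. ``Malcev's theorem applied to $L=\mathbf{Q}\Lambda$'' yields some $\mathbf{Z}$-form, not one containing $\Lambda$ with index bounded independently of $\Lambda$. The layer-by-layer sketch is plausible but not carried out: one must show that the $*$-subgroup generated by $\Lambda$ sits inside $K^{-1}\Lambda$ for a $K$ depending only on $\dim\g$, and the lifting step from $\g/\g^c$ back to $\g$ (controlling how the central extension interacts with arbitrarily long $*$-words) is exactly where the uniformity has to be earned.

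The paper takes a different route that sidesteps all denominator bookkeeping. It introduces \emph{strong subrings}: additive subgroups closed under $(x_1,\dots,x_i)\mapsto m_i^{-1}[x_1,[x_2,\dots,x_i]\cdots]$ for each BCH denominator $m_i$. A strong subring is automatically both a Lie subring and a $*$-subgroup. The key point is then proved once, in the free $(d-1)$-step nilpotent Lie $\mathbf{Q}$-algebra $\g(d)$ on $d$ generators $x_1,\dots,x_d$: the strong subring $\Xi_d$ they generate is finitely generated as an abelian group (because iterated substitutions among the finitely many rescaled multilinear laws of degree $\le c$ produce only finitely many laws), hence discrete in $\g(d)\otimes\mathbf{R}$; so some $n\Xi_d$ lies inside both the Lie subring $\Lambda_d$ and the $*$-subgroup $\Gamma_d$ generated by the $x_i$. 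For an arbitrary $\g$ and lattice $\Gamma$ (or subring $\Lambda$) on $d$ generators, push forward along the homomorphism $\g(d)\to\g$; the images $f(n\Xi_d)\subset f(\Xi_d)$ are strong subrings sandwiching $\Gamma$ (resp.\ $\Lambda$), with index $n^d$. The uniform constant thus comes from a single finiteness argument in the free object, with no explicit denominator tracking---at the price of being non-effective.
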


We include a proof (of a slightly stronger statement) in Appendix \ref{app_nil}.

\subsection{Systolic growth of a real nilpotent Lie algebra}

We define a Lie algebra analogue of the systole and systolic growth as follows (fix some Lebesgue measure on the vector space $\g$). Recall that $\lf\cdot\rf$ denotes the Guivarch length, introduced in \S\ref{pre_la}.

\begin{defn}
If $H\subset\g$, define 
\[\sys(H)=\inf\{\lf h\rf:h\in H\smallsetminus\{0\}\}\in\mathbf{R}_+\cup\{+\infty\}.\] Also define the uniform (or $G$-uniform) systole as $\sys^u(H)=\inf_{g\in G}g\cdot\sys(H)$ (where $G$ acts through the adjoint representation).

The systolic (resp.\ uniform systolic) growth of $(\g,\lf\cdot\rf)$ is the function mapping $r\ge 0$ to the infimum $\sigma(r)\in\mathbf{R}_+\cup\{+\infty\}$ of covolumes of cocompact discrete subrings of $\g$ with systole (resp.\ uniform systole) $\ge r$.
\end{defn}

Note that the systolic growth also depends on the choice of normalization of the Lebesgue measure. Its asymptotic growth, however, only depends on the real Lie algebra $\g$. Its interest lies in the following fact:

\begin{prop}\label{lalgsg}
The systolic (resp.\ uniform systolic) growth of $\g$ (as a Lie algebra) and the systolic (resp.\ uniform systolic) growth of $G$ are $\simeq$-equivalent. 
\end{prop}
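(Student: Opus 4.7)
The plan is to use Lemma \ref{llazard} to shuttle back and forth between lattices in $G$ and cocompact discrete subrings of $\g$, combining this with two further ingredients: (i) Guivarch's comparison $\lf x\rf\simeq|\exp x|_G$ (recalled in \S\ref{pre_la}), which matches the Guivarch systole in $\g$ with the systole in $G$ up to multiplicative constants; and (ii) the standard fact that in a simply connected nilpotent Lie group the exponential map has constant Jacobian $1$, so that for any cocompact additive lattice $\Lambda\subset\g$ admitting (up to bounded index) a Malcev basis adapted to a subgroup of $G$ close to $\exp\Lambda$, the change of coordinates between exponential coordinates of the first and second kind is unipotent and hence covolume-preserving. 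In particular, whenever $\Gamma_1\subset\exp(\Lambda)\subset\Gamma_2$ are lattices of bounded index in $G$, the covolumes of $\Gamma_1,\Gamma_2$ in $G$ and of $\Lambda$ in $\g$ all agree up to a constant depending only on $\dim\g$.

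First I would prove $\sigma_\g\preceq\sigma_G$. Given a lattice $\Gamma\subset G$ of systole $\ge r$ and covolume $V$, Lemma \ref{llazard}(\ref{llaz2}) yields subrings $\Lambda_1\subset\log\Gamma\subset\Lambda_2$ with $[\Lambda_2:\Lambda_1]\le C$. Each nonzero $x\in\Lambda_1$ is of the form $\log\gamma$ with $\gamma\in\Gamma\setminus\{1\}$, so $\lf x\rf\simeq|\gamma|_G\ge r$ by Guivarch; hence $\sys(\Lambda_1)\gtrsim r$. For the covolume, apply Lemma \ref{llazard}(\ref{llaz1}) to $\Lambda_2$ to obtain lattices $\Gamma^*_1\subset\exp(\Lambda_2)\subset\Gamma^*_2$ of bounded index. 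Since $\log\Gamma\subset\Lambda_2$ gives $\Gamma\subset\Gamma^*_2$, we get $\mathrm{covol}_G(\Gamma^*_2)\le V$, so $\mathrm{covol}_G(\Gamma^*_1)\le CV$. By the covolume correspondence of the previous paragraph, $\mathrm{covol}_\g(\Lambda_2)\lesssim V$, and $[\Lambda_2:\Lambda_1]\le C$ then gives $\mathrm{covol}_\g(\Lambda_1)\lesssim V$, as desired.

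The converse inequality $\sigma_G\preceq\sigma_\g$ is completely symmetric: starting from a cocompact subring $\Lambda$ of $\g$ with $\sys(\Lambda)\ge r$ and covolume $v$, Lemma \ref{llazard}(\ref{llaz1}) produces lattices $\Gamma_1\subset\exp(\Lambda)\subset\Gamma_2$ of bounded index in $G$. Every $\gamma\in\Gamma_1\setminus\{1\}$ lies in $\exp(\Lambda)$, so $\log\gamma\in\Lambda$ has Guivarch length $\ge r$, and Guivarch's estimate gives $|\gamma|_G\gtrsim r$; thus $\sys(\Gamma_1)\gtrsim r$. The covolume correspondence yields $\mathrm{covol}_G(\Gamma_1)\lesssim v$. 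For the uniform systolic variant, the same argument works verbatim once one observes that conjugation in $G$ corresponds via $\exp$ to the adjoint action of $G$ on $\g$, so the infimum defining the $G$-uniform systole is preserved up to Guivarch constants whether taken in $G$ or in $\g$.

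The main obstacle is the covolume correspondence, since $\exp$ is not a homomorphism and $\exp\Lambda$ need not be a subgroup, so one cannot literally identify $\mathrm{covol}_\g(\Lambda)$ with the covolume of $\exp\Lambda$. The resolution, which I would spell out explicitly (perhaps as a separate lemma placed alongside the appendix argument for Lemma \ref{llazard}), is to replace $\exp\Lambda$ by a bounded-index sublattice admitting an adapted Malcev basis, and to invoke the fact that the transition between coordinates of the first and second kind on such a basis is a volume-preserving polynomial change of variables; the bounded index distorts covolumes only by the universal constant $C$, which is absorbed into the asymptotic equivalence $\simeq$.
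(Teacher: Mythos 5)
Your proposal is correct and follows the paper's approach: shuttle between lattices and discrete cocompact subrings via Lemma \ref{llazard}, use Guivarch's estimate to compare systoles, and use $G$-equivariance of $\exp$ for the uniform variant. The one place where you take a slightly different route is the covolume correspondence, which you identify as ``the main obstacle'' and propose to handle via Malcev bases and a unipotent change between exponential coordinates of the first and second kind. The paper resolves this more directly with Proposition \ref{covol_admu} in the appendix: one simply \emph{defines} the Haar measure on $G$ as the push-forward by $\exp$ of the chosen Lebesgue measure on $\g$, and then proves by induction (splitting along a central ideal $\mk n$ and a complementary subspace $\mk v$) that for any lattice $\Gamma$ whose logarithm is an additive subgroup, $\mathrm{cov}_G(\Gamma)=\mathrm{cov}_\g(\log\Gamma)$ exactly. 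This avoids any appeal to Malcev coordinates and gives an equality of covolumes rather than a bounded-ratio estimate; the bounded-index distortion from Lemma \ref{llazard} is then the only source of multiplicative constants, as in your argument. Both are valid; the paper's version is cleaner and is exactly the lemma you suggest ``placing alongside the appendix argument.''
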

\begin{proof}
The exponential map preserves the systole of subsets up to a bounded multiplicative error, by Guivarch's estimates. So the proposition would be already proved if the exponential map were giving an exact correspondence between cocompact discrete subrings of $\g$ and lattices in $G$. This is not the case, but is, however, ``true up to a bounded index error", as explained in Lemma \ref{llazard}, which entails the result for systolic growth.

The uniform case follows, using in addition that the exponential map $\g\to G$ is $G$-equivariant, for the adjoint action on $\g$ and the conjugation action on $G$.
\end{proof}

\subsection{Systolic growth of a rational nilpotent Lie algebra}\label{syrnla}

To estimate the systolic growth in the lattice, we need a similar notion pertaining to rational Lie algebras. Namely, let $\mk{l}$ be a fd nilpotent Lie algebra over $\mathbf{Q}$. Define its ``realification" $\g=\mk{l}\otimes_\mathbf{Q}\mathbf{R}$. Choose $\mk{v}_i$ and norms as above on $\g$, so as to define $\lf\cdot\rf$; so we have a notion of systole for subsets $\g$, and choose a Lebesgue measure on $\g$.

\begin{defn}
The systolic (resp.\ $G$-uniform systolic) growth of the rational Lie algebra $(\mk{l},\lf\cdot\rf)$ is the function mapping $r\ge 0$ to the infimum $\sigma(r)\in\mathbf{R}_+\cup\{+\infty\}$ of covolumes of cocompact discrete subrings of $\g$ contained in $\mk{l}$, with systole (resp.\ $G$-uniform systole) $\ge r$. 
\end{defn}

Note that the only difference in the last definition is that we restrict to those subrings contained in $\mk{l}$. In particular, this function is bounded below by the systolic growth of $G$ (relative to the same choice of norms, etc.). Also note that we did not attempt to define an analogue of the $\Gamma$-uniform systolic growth.

Denoting $L=\exp(\mk{l})$ (which can be thought as the group $G_\mathbf{Q}$ of $\mathbf{Q}$-points of $G$ for a suitable rational structure), we have

\begin{prop}\label{equisystra}
If $\Gamma$ is any lattice in $G$ contained in $L$, then the systolic (resp.\ $G$-uniform systolic) growth of $\Gamma$ is $\simeq$-equivalent to the systolic (resp.\ $G$-uniform systolic) growth of the rational Lie algebra $\mk{l}$.
\end{prop}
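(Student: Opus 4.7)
The proof runs in parallel to Proposition \ref{lalgsg}, using Lemma \ref{llazard} to pass between cocompact subrings of $\mathfrak{g}$ and lattices in $G$ up to bounded index, and Guivarch's estimates to translate Lie algebra systole to Lie group systole. The new feature is that we must restrict subrings to $\mathfrak{l}$ and lattices to finite-index subgroups of the specific lattice $\Gamma$. Since $\exp$ intertwines the adjoint action on $\mathfrak{g}$ with conjugation on $G$, the $G$-uniform version follows from the same argument applied orbit-wise, so it suffices to handle the non-uniform case.

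For the direction $\sigma_\mathfrak{l}(r) \preceq \sigma_\Gamma(r)$, take a finite-index subgroup $\Gamma' \subset \Gamma$ of systole $\ge r$ and covolume $V$. Since $\Gamma' \subset \Gamma \subset L$, we have $\log\Gamma' \subset \mathfrak{l}$. Applying Lemma \ref{llazard}(\ref{llaz2}) to $\Gamma'$ yields a cocompact subring $\Lambda_1 \subset \log\Gamma' \subset \mathfrak{l}$, sandwiched in an over-subring $\Lambda_2$ with $[\Lambda_2:\Lambda_1]\le C$. Guivarch's estimates give $\sys(\Lambda_1) \succeq r$, and because $\exp$ preserves Lebesgue measure on a nilpotent Lie group, the covolume of $\Lambda_1$ is at most $C$ times $V$.

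For $\sigma_\Gamma(r) \preceq \sigma_\mathfrak{l}(r)$, fix once and for all, via Lemma \ref{llazard}(\ref{llaz2}) applied to $\Gamma$ itself, a reference subring $\Lambda_\Gamma \subset \log\Gamma \subset \mathfrak{l}$, so that $\exp\Lambda_\Gamma$ is a finite-index subgroup of $\Gamma$. Given a cocompact subring $\Lambda \subset \mathfrak{l}$ of systole $\ge r$ and covolume $V$, form the subring $\Lambda \cap \Lambda_\Gamma \subset \Lambda_\Gamma$, apply Lemma \ref{llazard}(\ref{llaz1}) to it, and extract a lattice $\Gamma_1 \subset \exp(\Lambda \cap \Lambda_\Gamma) \subset \exp\Lambda_\Gamma \subset \Gamma$: this is a finite-index subgroup of $\Gamma$ whose systole is $\succeq r$. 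The main difficulty is to show that the passage $\Lambda \leadsto \Lambda \cap \Lambda_\Gamma$ inflates the covolume by only a bounded multiplicative factor. The identity $[\Lambda:\Lambda \cap \Lambda_\Gamma] = [\Lambda + \Lambda_\Gamma : \Lambda_\Gamma]$ reduces this to controlling how dense $\Lambda+\Lambda_\Gamma$ can be in $\mathfrak{l}$; for abstract pairs of $\mathbb{Z}$-lattices this index is not uniformly bounded, but the hypothesis that $\Lambda$ is a subring of systole $\ge r$ constrains the denominators of $\Lambda$ relative to $\Lambda_\Gamma$ and, combined with the trivial lower bound covolume of $\Lambda\succeq r^D$, yields the required bound within the multiplicative tolerance of $\simeq$. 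This is the technical core of the proof.
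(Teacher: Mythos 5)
Your first direction, $\sigma_{\mathfrak{l}}\preceq\sigma_\Gamma$, is correct and essentially the paper's argument: start from a finite-index subgroup $\Gamma'\subset\Gamma\subset L$, note $\log\Gamma'\subset\mathfrak{l}$, apply Lemma~\ref{llazard}(\ref{llaz2}), and observe that the resulting subring is automatically inside $\mathfrak{l}$.

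For the reverse direction there is a genuine gap, and your proposed repair does not work. You intersect $\Lambda$ with the fixed reference subring $\Lambda_\Gamma$ and claim the index $[\Lambda:\Lambda\cap\Lambda_\Gamma]$ is controlled by the systole and covolume constraints on $\Lambda$. This is false already in the abelian case, where every additive lattice is a subring and the Guivarch length is just a norm: take $\mathfrak{g}=\mathbf{R}^2$, $\mathfrak{l}=\mathbf{Q}^2$, $\Lambda_\Gamma=\mathbf{Z}^2$, and for a positive integer $R$ and a large integer $n$ set $\Lambda=\mathbf{Z}(R,0)\oplus\mathbf{Z}(1/n,R)$. Then $\mathrm{sys}(\Lambda)=R$, $\mathrm{cov}(\Lambda)=R^2$, yet $\Lambda\cap\mathbf{Z}^2=\mathbf{Z}(R,0)\oplus\mathbf{Z}(1,nR)$ has covolume $nR^2$, so $[\Lambda:\Lambda\cap\Lambda_\Gamma]=n$ is unbounded. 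So neither the systole hypothesis nor the subring hypothesis constrains the ``denominators'' of $\Lambda$ relative to $\Lambda_\Gamma$, and the passage $\Lambda\leadsto\Lambda\cap\Lambda_\Gamma$ can inflate the covolume by an arbitrary factor. The step you correctly single out as the technical core is precisely where the argument breaks.

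Note also that this is a genuinely different route from the paper's: the paper's (admittedly terse) proof does not perform an intersection at all; it only records that Lemma~\ref{llazard} produces objects that automatically lie in $L$, respectively in $\mathfrak{l}$, and then invokes the argument of Proposition~\ref{lalgsg}. Applied literally that produces, from a subring $\Lambda\subset\mathfrak{l}$, a lattice in $L$ of comparable covolume and systole --- but not a finite-index subgroup of the given $\Gamma$, which is what $\sigma_\Gamma$ requires. To close this gap one cannot rely on intersection with $\Gamma$ (as the example shows); instead one must \emph{rebuild} a subring inside $\Lambda_\Gamma$ with comparable covolume-and-systole data (e.g.\ reading off the quantities $a_j(\Lambda)$ from \S\ref{covine} along a solid flag and reconstructing a subring of $\Lambda_\Gamma$ realizing them up to bounded factors, in the spirit of the explicit constructions in Proposition~\ref{ineq} and \S\ref{compu}). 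That reconstruction step is what is missing both from your proposal and from a literal reading of the one-line observation in the paper.
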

\begin{proof}
An observation is that in Lemma \ref{llazard}, if $\Lambda\subset\mk{l}$ then automatically $\Gamma_i\subset L$, and in the other direction if $\Gamma\subset L$ then automatically $\Lambda_i\subset\mk{l}$. This being granted, the proof follows the same lines as that of Proposition \ref{lalgsg}.
\end{proof}

\section{General upper bounds on the systolic growth}
We prove here Proposition \ref{ineq}, giving here a more general result since we consider the uniform systolic growth. 

\begin{prop}\label{ineq}
Let $G$ be a simply connected nilpotent Lie group with a lattice $\Gamma$. Let $D$ be its homogeneous dimension and let $k$ be defined in \S\ref{s_iub}. Then $\sigma_{\Gamma,G}^u(r)\preceq r^{D+k}$, and hence the systolic growth and $G$-uniform systolic growth of both $G$ and $\Gamma$ are all $\preceq r^{D+k}$.
\end{prop}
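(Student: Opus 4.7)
The plan is to reduce the problem to a question about discrete cocompact subrings of $\g$ via Propositions \ref{lalgsg} and \ref{equisystra}, and then to construct such subrings explicitly by an anisotropic dilation of a fixed rational lattice adapted to the lower central series.

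First, I would fix a $\mathbf{Q}$-structure on $\g$ coming from $\Gamma$, rational vector-space supplements $\mk{v}_i$ of $\g^{i+1}$ in $\g^i$, and (invoking Lemma \ref{llazard} together with standard rescaling) a cocompact discrete subring $\Lambda_0\subset\mk{l}$ that is adapted to the decomposition: $\Lambda_0=\bigoplus_i(\Lambda_0\cap\mk{v}_i)$, with component-wise integral brackets $[\Lambda_0\cap\mk{v}_i,\Lambda_0\cap\mk{v}_j]\subset\bigoplus_{k\ge i+j}(\Lambda_0\cap\mk{v}_k)$. For every positive integer $n$, set $b_i:=\max(2i,c)$ and define
\[\Lambda_n:=\bigoplus_i n^{b_i}(\Lambda_0\cap\mk{v}_i)\subset\mk{l}.\]
That $\Lambda_n$ is a subring reduces to the inequality $\max(2i,c)+\max(2j,c)\ge\max(2k,c)$ for all $k$ with $i+j\le k\le c$, a short case analysis on whether each of $2i,2j$ lies below or above $c$ (the case $2i,2j\ge c$ forces $i+j=k=c$ and gives equality). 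The covolume then satisfies
\[\mathrm{covol}(\Lambda_n)\asymp n^{\sum_i b_i\dim\mk{v}_i}=n^{2D+\sum_{i<c/2}(c-2i)\dim\mk{v}_i}=n^{2(D+k)}.\]

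For the $G$-uniform systole, I would use that $[\g,\g^i]\subset\g^{i+1}$, which implies that $\mathrm{Ad}(g)$ acts trivially on $\g^i/\g^{i+1}$ for every $g\in G$. Thus if $x\in\Lambda_n\setminus\{0\}$ has weight $i$ (the largest integer with $x\in\g^i$), and one writes $x=n^{b_i}\lambda_i+(\text{terms in }\g^{i+1})$ with $0\ne\lambda_i\in\Lambda_0\cap\mk{v}_i$, the $\mk{v}_i$-component of $\mathrm{Ad}(g)x$ equals $n^{b_i}\lambda_i$ regardless of $g$. Consequently $\lf\mathrm{Ad}(g)x\rf\gtrsim(n^{b_i}\|\lambda_i\|)^{1/i}\gtrsim n^{b_i/i}\ge n^2$ uniformly in $g$, using $\|\lambda_i\|\gtrsim 1$ since $\Lambda_0\cap\mk{v}_i$ is a lattice. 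Setting $r:=n^2$ and transferring back to $\Gamma$ via Lemma \ref{llazard} produces a finite-index subgroup of $\Gamma$ with $G$-uniform systole $\ge r$ and covolume $O(r^{D+k})$, which is the desired bound on $\sigma^u_{\Gamma,G}$.

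The main point requiring care is the choice of exponents: the naive Carnot choice $b_i=i$ would yield covolume $r^D$ but fails to be closed under brackets whenever $[\mk{v}_i,\mk{v}_j]$ projects nontrivially onto some $\mk{v}_k$ with $k>i+j$, i.e., precisely in the non-Carnot setting; the choice $b_i=\max(2i,c)$ is minimal (up to the rescaling $r=n^2$) among exponents compatible with the bracket inequalities, and the identity $\sum_i\max(2i,c)\dim\mk{v}_i=2(D+k)$ shows that the resulting overhead over the pure volume term is exactly $r^k$, matching the definition of $k_c(\g)$.
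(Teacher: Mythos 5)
Your proposal is correct and follows essentially the same route as the paper: construct, for a decomposition $\g=\bigoplus\mk{v}_i$ compatible with the lower central series, a rational lattice $\Lambda_0=\bigoplus(\Lambda_0\cap\mk{v}_i)$ with integral structure constants and dilate it anisotropically, using the fact that $\mathrm{Ad}(g)$ acts trivially on each $\g^i/\g^{i+1}$ to control the $G$-uniform systole. The only (cosmetic) difference is the normalization of the exponents: the paper takes $\Lambda_n=\bigoplus n^{\max(c/2,i)}\g_i(\mathbf{Z})$ and restricts to square $n$ to keep the exponents integral, whereas you take $b_i=\max(2i,c)=2\max(i,c/2)$ and set $r=n^2$; your case analysis for the bracket inequality and the paper's shortcut ($\max(c/2,i)+\max(c/2,j)\ge c\ge\max(c/2,p)$) are equivalent, and the exponent bookkeeping $\sum_i b_i\dim\mk{v}_i=2(D+k)$ matches $k'=D+k$ in the paper.
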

\begin{proof}
It is enough to show that the $G$-uniform systolic growth of $\Gamma$ is $\preceq r^{k'}$, where 
\[k'=\frac{c}2\dim(\g/\g^{\lceil c/2\rceil})+\sum_{i=\lceil c/2\rceil}^c i\dim(\g^i/\g^{i+1})=\sum_{1}^c \max(c/2,i)\dim(\g^i/\g^{i+1}).\]
Let $G_\mathbf{Q}\subset G$ be the rational Malcev closure of $\Gamma$ and $\g_\mathbf{Q}$ its Lie algebra. From the lower central series we choose supplement subspace to obtain a vector space decomposition $\g=\bigoplus_{i=1}^c\g_i$, with $\bigoplus_{j\ge i}\g_j=\g^i$ for all $i$ (this is not necessarily a Lie algebra grading). We choose bases of all this subspaces so as to ensure that all structure constants are integral; thus $\g_i(\mathbf{Z})$ means the discrete subgroup generated by the given basis of $\g_i$.

We now define, for every square integer $n$, $\Lambda_n=\bigoplus_{i=1}^cn^{\max(c/2,i)}\g_i(\mathbf{Z})$. 
We have to check that this is a subalgebra, namely that \[B_{ij}:=[n^{\max(c/2,i)}\g_i(\mathbf{Z}),n^{\max(c/2,j)}\g_j(\mathbf{Z})]\subset\Lambda_n\] for all $i,j$. Indeed, keeping in mind that $n$ is a square,
\[B_{ij}\subset n^c[\g_i(\mathbf{Z}),\g_j(\mathbf{Z})]\subset n^c\bigoplus_{p=i+j}^c\g_p(\mathbf{Z})\subset \Lambda_n.\]
The $G$-uniform systole of $\Lambda_n$ is $\succeq n$: indeed, any nonzero element has the form $w=\sum_{j\ge i}n^jv_j$ with $v_i\in\g_i(\mathbf{Z})\smallsetminus\{0\}$. So any $G$-conjugate of $w$ has the form $n^iv_i+\mu$ with $\mu\in\g^{i+1}$ and hence has norm $\ge n^i$ (for the $\ell^1$-norm with respect to the fixed basis), and thus has Guivarch length $\ge n$. Then $\Lambda_n$ precisely has covolume $n^{k'}$.

Using that the exponential map $\g\to G$ is $G$-equivariant and in view of Lemma \ref{llazard}, we deduce the desired upper bound.
\end{proof}

The following proposition provides upper bounds on $k$ and $k'=k+D$ depending only on the dimension $d$. Recall that the maximal homogeneous dimension $D$ for given dimension $d\ge 2$ is equal to $\frac{d(d-1)}2+1$ and is precisely attained for filiform Lie algebras, which are those of maximal nilpotency class (namely $d-1$).  

\begin{prop}\label{ub_kc}
Let $\g$ be a finite-dimensional nilpotent Lie algebra of dimension $d$,  nilpotency length $c$, and homogeneous dimension $D$, and $k=k_c(\g)$ is defined in \S\ref{s_iub}. Then \[k\le \frac{d^2}6 -\frac{d}2+\frac{1}{2}\quad\text{and}\quad k+D\le \frac{5d^2-4d}8.\]
\end{prop}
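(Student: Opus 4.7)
The plan is to convert both bounds into extremal problems for the sequence $(d_i)_{1\le i\le c}$ with $d_i = \dim(\g^i/\g^{i+1})$, subject to $d = \sum_i d_i$, $d_i \ge 1$ for $1 \le i \le c$, and $d_1 \ge 2$ when $c \ge 2$ (a nilpotent Lie algebra minimally generated by one element is abelian). I would use $D = \sum_i i\,d_i$, $k = \sum_{i=1}^{\lceil c/2\rceil - 1}(c/2 - i)\,d_i$, and (as shown in the proof of Proposition \ref{ineq}) $k + D = \sum_{i=1}^c \max(c/2, i)\, d_i$. The case $c = 1$ (abelian, $k=0$, $k+D = d$) is dispatched separately; otherwise $c \ge 2$, and in particular $c \le d - 1$.

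For the bound on $k$: the coefficients $c/2 - i$ are positive and strictly decreasing on $\{1, \ldots, m\}$ with $m = \lceil c/2\rceil - 1$. Since $d_i \ge 1$ forces $\sum_{i \le m} d_i \le d - (c - m)$, concentrating the excess mass at $d_1$ (that is, $d_1 = d - c + 1$, $d_i = 1$ otherwise) maximizes $k$ at fixed $c$. A short computation then gives, for odd $c$ (so $m = (c-1)/2$),
\[ k \le \bigl[-3c^2 + (4d+6)c + 1 - 8d\bigr]/8; \]
this concave quadratic in real $c$ attains its maximum at $c_0 = (2d+3)/3$ with value exactly $d^2/6 - d/2 + 1/2$. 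The analogous computation for even $c$ yields a strictly smaller maximum $d^2/6 - d/2 + 3/8$, so taking the larger of the two proves the first inequality.

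For the bound on $k + D$: now $\max(c/2, i)$ is maximal at $i = c$ (value $c$), so at fixed $c$ the maximum of $k + D$ is attained by concentrating the excess mass at $d_c$: $d_1 = 2$, $d_i = 1$ for $2 \le i < c$, $d_c = d - c$. Summing $\max(c/2, i)$ by splitting the range at $c/2$ and adding the contributions of the extra mass gives
\[ k + D \le [c(8d - 3c - 2) + \epsilon_c]/8, \]
with $\epsilon_c \in \{0,1\}$ depending only on the parity of $c$. As a real-variable quadratic this is concave with real maximum at $c_* = (4d - 1)/3 > d - 1$, so on the feasible range $c \in \{2, \ldots, d-1\}$ the bound is increasing in $c$ and attains its maximum at $c = d - 1$. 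Substituting yields $(5d^2 - 4d)/8$ when $d$ is even (with equality realized by filiform Lie algebras) and $(5d^2 - 4d - 1)/8 < (5d^2 - 4d)/8$ when $d$ is odd, in either case giving the second inequality.

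The main obstacle I expect is parity bookkeeping: both $k$ and $k+D$ involve a threshold at $c/2$ lying on an integer or a half-integer according to the parity of $c$, so the explicit quadratic in $c$ obtained after concentrating the mass differs slightly in each parity and must be optimized separately before comparing. The structural observation that the extremizers are forced — large $d_1$ for $k$, large $d_c$ for $k+D$, by monotonicity of the coefficient in $i$ — reduces the problem to elementary one-variable optimization, with the only other subtlety being the upper constraint $c \le d - 1$ coming from $d_1 \ge 2$ (which rules out the unconstrained real optimum $c_* = (4d-1)/3$ in the second bound and forces the maximum to the boundary).
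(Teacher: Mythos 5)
Your proof is correct and follows essentially the same route as the paper: both reduce each bound to optimizing a quadratic in $c$ after concentrating the dimension sequence $(d_i)$ at the extremal index (all excess mass at $d_1$ for $k$, all excess mass at $d_c$ with the forced $d_1 \ge 2$ for $k+D$), yielding the quadratics $(-3c^2 + (4d+6)c - 8d + e)/8$ and $(8cd - 3c^2 - 2c + e)/8$ that the paper obtains via termwise bounds on $\dim \g^i$. Your framing as an explicit simplex optimization is somewhat more transparent than the paper's phrasing, but the computations and final substitutions (real maximum at $c = (2d+3)/3$, resp.\ boundary maximum at $c = d-1$, with parity bookkeeping $e \in \{0,1\}$) are identical.
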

\begin{proof}
We denote $b=\lceil c/2\rceil$ and $d=\dim(\g)$. Then
\begin{align*}
k= & \sum_{i=1}^{b-1}\left(\frac{c}2-i\right)\dim(\g^i/\g^{i+1}) \\
	=& \frac{c}2\dim(\g/\g^b)-\dim(\g/\g^2)-\sum_{i=2}^{b-1}i\dim(\g^i/\g^{i+1});\\
= & \frac{c}2(d-\dim(\g^b))-(d-\dim(\g^2))-\sum_{i=2}^{b-1}i\dim(\g^i/\g^{i+1});
\end{align*}
for $i\ge 2$ we use the inequality $\dim(\g^i/\g^{i+1})\ge 1$, $\dim(\g^i)\ge c-i+1$ for $1\le i\le c$ (applied for $i=2$ and $i=b$) and get
\begin{align*}
k\le & \frac{c}2(d-c+b-1)-(d-c+1)-\frac{b(b-1)}2+1;
\end{align*}
write $c=2b-e$ with $e\in\{0,1\}$, this yields
\[k\le\frac{-3c^2+2(2d+3)c-8d+e}8.\]

A polynomial function of the form $-3x^2+2ax$ is maximal for $x=a/3$, where it takes the value $a^2/3$. Hence 
\[k\le\frac{\frac13(2d+3)^2-8d+e}8=\frac16d^2-\frac12d+\frac{3+e}8.\] 

Now consider $k'=k+D$ and again write $c=2b-e$ with $e\in\{0,1\}$. Then
\begin{align*}
k'= & \frac{c}2\dim\g+\sum_{i=b}^{c}\left(i-\frac{c}2\right)\dim(\g^i/\g^{i+1}) \\
	=& \frac{cd}2+\sum_{i=b}^{c}\left(i-\frac{c}2\right)+\sum_{i=b}^{c}\left(i-\frac{c}2\right)(\dim(\g^i/\g^{i+1})-1);\\
	\le & \frac{cd}2+\sum_{i=b}^{c}\left(i-\frac{c}2\right)+\frac{c}2\sum_{i=b}^{c}(\dim(\g^i/\g^{i+1})-1);\\
	 & = \frac{cd}2+\sum_{i=b}^{c}(i-c)+\frac{c}2\sum_{i=b}^{c}\dim(\g^i/\g^{i+1});\\
 &= \frac{cd}2+\frac{c(c+1)}{2}-\frac{b(b-1)}{2}-c(c-b+1)+\frac{c}2\sum_{i=b}^{c}(\dim(\g^i/\g^{i+1})-1);\\
 &= \frac{4cd-c^2+(2e-2)c+e}{8}+\frac{c}2\dim(\g^b).\\
\end{align*}

For all $i\ge 2$ we have $\dim\g^i\le d-i$. Therefore we have (assuming $c\ge 3$, so $b\ge 2$)
\[k'\le  \frac{4cd-c^2+(2e-2)c+e}{8}+\frac{c}2(d-b)=\frac{8cd-3c^2-2c+e}{8}.\]

A function of the form $x\mapsto -3x^2+ax$ increases until it takes its maximum for $x=a/6$; actually, using $d\ge 4$, $d-1\le (8d-7e-6)/6$, and since $c\le d-1$, so the last expression is bounded above by the same when $c$ is replaced with its maximum possible value $d-1$. Hence
\[k'\le  \frac{8(d-1)d-3(d-1)^2-2(d-1)+e}{8}=\frac{5d^2-4d+e-1}{8}.\qedhere\]
\end{proof}

\section{The strategy for lower bounds}\label{s_thestra}

We consider a fd $c$-step nilpotent real Lie algebra $\g$, with lower central series $(\g^i)_{i\ge 1}$ and dimension $d$. We decompose it as a direct sum of subspaces $\g=\bigoplus_{i=1}^c\mk{v}_i$, so that for all $i$, $\g^i=\bigoplus_{j\ge i}\mk{v}_j$ (we call this a compatible decomposition).

Let us choose a basis of $\g$ as a concatenation of bases of the $\mk{v}_i$. Note that the basis determines the subspaces $\mk{v}_i$, which is spanned by the subset of basis elements that belong to $\g^i\smallsetminus\g^{i+1}$. 

It is convenient to also directly define such a basis (without defining the $\mk{v}_i$ beforehand). A basis $(e_1,\dots,e_d)$ in a fd nilpotent Lie algebra is compatible if it satisfies the following three conditions:
\begin{itemize}
\item for all $i,j$, $e_j\in\g^i\smallsetminus\g^{i+1}$ and $e_k\in\g^{i+1}$ implies $j<k$;
\item $\{e_j:j\ge 1\}\cap\g^i$ spans $\g^i$ for all $i$;
\item the subspaces $\g_i=\langle e_j:j\ge 1,e_j\in\g^i\smallsetminus\g^{i+1}\rangle$ span their direct sum.
\end{itemize}

Thus any compatible basis determines a compatible decomposition, and any compatible decomposition yields compatible bases.

In the real case, it is convenient to assume that the nonzero structure constants of $\g$ with respect to this basis have absolute value $\ge 1$ (this is a mild assumption as it always hold after replacing the basis by a scalar multiple), thus not changing the compatible decomposition.

We define a compatible flag as a sequence of ideals 
\begin{equation}\g=\mk{w}_1>\mk{w}_2>\dots>\mk{w}_k=\{0\},\label{flag}\end{equation}
such that each term $\g^i$ occurs among the $\mk{w}_j$. 

\subsection{Dilations}\label{s_dil}
Fix a nilpotent fd Lie algebra with a compatible decomposition $\g=\bigoplus_{i=1}^c\mk{v}_i$.

For a nonzero scalar $r$ define the diagonal linear automorphism $u(r)$ of $\g$ given by multiplication by $r^i$ on $\mk{v}_i$.

We define a new Lie algebra $\g[r]$, with underlying $k$-linear space $\g$, with the bracket $[x,y]_r=u(r)^{-1}[u(r)x,u(r)y]$, that is, the pull-back of the original bracket by the linear automorphism $u(r)$.

\begin{ex}
Consider the 5-dimensional Lie algebra with basis $(e_1,\dots,e_5)$ and nonzero brackets $[e_1,e_3]=e_4$, $[e_1,e_4]=[e_2,e_3]=e_5$. This is a compatible basis, with $\mk{v}_1,\mk{v}_2,\mk{v}_3$ having bases $(e_1,e_2,e_3)$, $(e_4)$ and $(e_5)$ respectively. Then $\g[r]$ has, in this basis, the nonzero brackets $[e_1,e_3]_r=e_4$, $[e_1,e_4]_r=e_5$, and $[e_2,e_3]_r=r^{-1}e_5$.
\end{ex}

\begin{rem}
If $\g=\bigoplus\mk{v}_i$ is a Lie algebra grading (and thus a Carnot grading), then $[\cdot,\cdot]_r=[\cdot,\cdot]$. Beware that for an arbitrary  (e.g., Carnot) nilpotent Lie algebra of nilpotency length $c\ge 3$, we can find a compatible decomposition $\g=\bigoplus\mk{v}_i$ such that $[\mk{v}_1,\mk{v}_1]$ is not contained in $\mk{v}_2$.
\end{rem}

\begin{rem}
We have $\g=\g[1]$, and the function $(g,h,r)\mapsto [g,h]_r$ is polynomial with respect to $r^{-1}$ (with each coefficient a skew-symmetric bilinear map $\g\times\g\to\g$). The constant coefficient $[\cdot,\cdot]_\infty$ of this polynomial can be thought as the limit of the brackets $[\cdot,\cdot]_r$ when $r\to\infty$ (this is indeed the case when $K$ is a normed field), and 
$(\g,[\cdot,\cdot]_\infty)$ is the associated Carnot-graded Lie algebra of $\g$, in which for $x\in\mk{v}_i$ and $y\in\mk{v}_j$, the bracket $[x,y]_\infty$ is defined as the projection of $[x,y]$ on $\mk{v}_{i+j}$ modulo $\g^{i+j+1}$. This 1-parameter family of brackets has been used by Pansu and Breuillard \cite{Pan,Bre}.
\end{rem}

\subsection{Renormalization of the algebraic systolic growth}

As in \S\ref{s_dil}, we fix a nilpotent fd Lie algebra with a compatible decomposition $\g=\bigoplus_{i=1}^c\mk{v}_i$, and assume in addition that the ground field is the field of real numbers. Then the dilation $u(r)$ multiplies the Lebesgue measure (for any choice of normalization) by $|r|^D$, where $D$ is the homogeneous dimension of $\g$ (see \S\ref{pre_la}).

We fix a norm on $\g$ which is the sup-norm with respect to this decomposition, that is, for every $x=\sum x_i$, $x_i\in\mk{v}_i$, we have $\|x\|=\max\|x_i\|$. This implies that $u(r)$ multiplies the Guivarch length by $|r|$. This also implies that for $x\in\g$, the conditions $\|x\|\ge 1$ and $\lfloor x\rfloor\ge 1$ are equivalent.

Let $\Lambda_r$ be a discrete cocompact subring of Guivarch systole $\ge r$. For instance, assuming that $\g$ is admits rational structure, we can choose, by compactness, $\Lambda_r$ to have covolume exactly $\sigma(r)$ (i.e., has minimal covolume among those discrete cocompact subrings of Guivarch systole $\ge r$.

Then $\Xi_r=u(r)^{-1}\Lambda_r$ has Guivarch systole $\ge 1$; this is a discrete cocompact subring in $\g[r]$. We have $\cov(\Xi_r)=r^{-D}\cov(\Lambda_r)$.
 
The idea is that rather than studying $\Lambda_r$ (whose systole tends to infinity), we study $\Xi_r$ (which has bounded systole $\ge 1$), the only caveat being that $\Xi_r$ is a Lie subring of $\g[r]$ (i.e., for some Lie bracket which varies with $r$). 
 
\subsection{First application: small systolic growth}\label{s_small}

Assume here that $\underline{\lim}r^{-D}\sigma(r)<\infty$ and let us prove that $\g$ is Carnot.

Suppose that $\cov(\Lambda_r)\le Cr^D$ (for $r\in I$, where $I$ is an unbounded set of positive real numbers). Then $\cov(\Xi_r)\le C$. By compactness of the set of lattices with systole $\ge 1$ and covolume $\le C$, we can find an unbounded subset $J\subset I$ such that $\lim_{J\ni r\to\infty}\Xi_r=\Xi_\infty$ (in the Chabauty topology) for some lattice $\Xi_\infty$. Clearly $\Xi_\infty$ is a subring of $(\g,[\cdot,\cdot]_\infty)$. 

Then $\Xi_r$ is isomorphic as a Lie ring to $\Xi_\infty$ for large enough $r\in J$. Indeed, choose a basis $(u_1,\dots,u_d)$ of $\Xi_\infty$ as a $\mathbf{Z}$-module. Then we can find $u_i^r\in\Xi_r$ with $u_i^r\to u_i$. Then $[u_i,u_j]=\sum_kn_{ij}^ku_k$ for suitable integers $n_{ij}^k$ (here exponents are additional indices, and not powers). Then $[u_i^r,u_j^r]-\sum_kn_{ij}^ku_k^r$ belongs to $\Xi_r$ and converges to zero, hence is zero for $r$ large enough. Moreover, since the covolume is a continuous function, eventually $(u_i^r)$ is a $\mathbf{Z}$-basis of $\Xi_r$. This shows that this is indeed an isomorphism. Since $\g\simeq\Xi_r\otimes_Z\mathbf{R}$ and $(\g,[\cdot,\cdot]_\infty)\simeq\Xi_\infty\otimes_\mathbf{Z}\mathbf{R}$ as real Lie algebras, this shows that $(\g,[\cdot,\cdot]_\infty)\simeq\g$. Thus $\g$ is Carnot.

\subsection{Covolume inequalities}\label{covine}

(In this \S\ref{covine}, the Lie algebra bracket plays no role.) 
Assume now that the ground field is the field of real numbers. 

We fix a compatible flag as in (\ref{flag}), denoted $F$ for short. 
We fix Lebesgue measures on all $\mk{w}_j/\mk{w}_{j-1}$, and thus on $\g$.
Fix a compatible basis, compatible with this flag.
This basis defines (in a compatible way) normalizations of the Lebesgue measures on each quotient $\mk{w}_j/\mk{w}_k$, where the cube $[0,1\mathclose[^q$ (for the given basis) has volume 1, and sup norms with respect to this basis. 

The basis also yields a compatible decomposition; we thus have a notion of Guivarch length (\S\ref{pre_la}). Observe that $u(r)$ multiplies the Guivarch length by $|r|$.

We say that an additive lattice $\Lambda$ of $(\g,+)$ is $F$-compatible if $\Lambda\cap\mk{w}_j$ is a lattice in $\mk{w}_j$ for every $j$.
This implies that for all $j$ the projection $\Lambda_{[j]}$ of $\Lambda\cap\mk{w}_j$ on $\mk{f}_j=\mk{w}_j/\mk{w}_{j-1}$ is a lattice in $\mk{f}_j$ as well.
Let $a_j(\Lambda)$ be the covolume of $\Lambda_{[j]}$ in $\mk{f}_j$. Then the covolume of $\Lambda\cap\mk{w}_j$ is equal to $\prod_{k\ge j}a_k(\Lambda)$.

Note that for any $v\in\g$, we have $\|v\|\ge 1$ if and only if $\lfloor v\rfloor\ge 1$. So for an additive lattice of systole $\ge 1$ (for either the norm or the Guivarch length), the covolume is $\ge 1$. Thus, $\prod_{k\ge j}a_k(\Lambda)\ge 1$ for all $j$. 
This means that the following holds:

\begin{lem}\label{bas_ineq}
For any $j$, any $F$-compatible additive lattice of $\g$ of Guivarch systole $\ge 1$ satisfies 
\[\prod_{k\ge j}a_k(\Lambda)\ge 1,\quad\text{or equivalently,}\quad \sum_{k\ge j}\log_r(a_k(\Lambda))\ge 0,\quad\forall r>1.\qquad\qed\]
\end{lem}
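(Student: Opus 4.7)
My plan is to reduce to a standard Minkowski-type covolume bound applied to each intersection lattice $\Lambda \cap \mk{w}_j$ inside the real vector space $\mk{w}_j$, endowed with the sup-norm coming from the chosen compatible basis.

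The first step is to check that all relevant structures restrict cleanly to $\mk{w}_j$. Since the flag (\ref{flag}) is compatible with the chosen basis, the basis elements lying in $\mk{w}_j$ form a basis of $\mk{w}_j$, providing a decomposition $\mk{w}_j = \bigoplus_i (\mk{w}_j \cap \mk{v}_i)$ compatible with the grading $\g = \bigoplus \mk{v}_i$. Consequently, for $v = \sum v_i \in \mk{w}_j$ we still have $\lfloor v\rfloor = \max \|v_i\|^{1/i}$ and $\|v\| = \max \|v_i\|$, so the equivalence $\lfloor v\rfloor \ge 1 \iff \|v\| \ge 1$ recalled just before the lemma transfers verbatim to $\mk{w}_j$.

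The second step is the elementary Minkowski bound. By $F$-compatibility, $\Lambda \cap \mk{w}_j$ is a full-rank lattice in $\mk{w}_j$, and its Guivarch systole is $\ge 1$ (since it is bounded below by that of $\Lambda$); hence, by the previous paragraph, its sup-norm systole is also $\ge 1$. The open cube $C_j = \{v \in \mk{w}_j : \|v\| < 1/2\}$ has Lebesgue volume $1$ in our chosen normalization, and any two distinct points in $C_j$ differ by a vector of sup-norm strictly less than $1$, which therefore cannot lie in $(\Lambda \cap \mk{w}_j) \setminus \{0\}$. Thus $C_j$ injects into $\mk{w}_j / (\Lambda \cap \mk{w}_j)$, forcing $\cov(\Lambda \cap \mk{w}_j) \ge 1$.

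The final step is to combine this with the telescoping formula $\cov(\Lambda \cap \mk{w}_j) = \prod_{k \ge j} a_k(\Lambda)$ established just above the lemma (iterate the short exact sequences $0 \to \Lambda \cap \mk{w}_{j+1} \to \Lambda \cap \mk{w}_j \to \Lambda_{[j]} \to 0$), yielding $\prod_{k \ge j} a_k(\Lambda) \ge 1$. The equivalent logarithmic form follows by applying $\log_r$, which is monotone increasing for $r > 1$. I expect no genuine obstacle here: the whole argument is a clean repackaging of the elementary lattice volume bound, and the only point requiring any care is ensuring that the sup-norm and Guivarch length structures descend cleanly to each $\mk{w}_j$, which is automatic from the compatibility of the basis with the flag.
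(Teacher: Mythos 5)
Your proof is correct and follows the same route as the paper: the paper simply observes (in the paragraph preceding the lemma) that $\|v\|\ge 1\iff\lfloor v\rfloor\ge 1$ and that an additive lattice of sup-norm systole $\ge 1$ has covolume $\ge 1$, then applies this to each $\Lambda\cap\mk{w}_j$ using the telescoping covolume identity. Your write-up just fills in the standard Blichfeldt/Minkowski argument (the injectivity of the open half-unit cube into the quotient torus) that the paper leaves implicit.
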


In the sequel, these inequalities will be combined with other inequalities making use of further assumptions (namely that $\Lambda$ is a Lie subring).

The requirements about the norm will be fulfilled when we consider a Lie algebra with basis $(e_1,\dots,e_d)$, and a flag containing all elements of the lower central series such that, denoting $\g_{\ge i}$, each element of the flag is one of the $\g_{\ge i}$; on each $\g_{\ge i}/\g_{\ge j}$ the norm being the $\ell^\infty$ norm and the Lebesgue measure being normalized so that the cube $[0,1\mathclose[^k$ has measure 1.

\section{Precise estimates}

We give explicit estimates of the systolic growth for various illustrating examples. While we obtain upper bounds by easy explicit construction, we use the method of \S\ref{s_thestra} to obtain lower bounds.

\subsection{5-dimensional non-Carnot Lie algebras}\label{s_5dim}

Following a convenient custom, when we describe a Lie algebra by saying that the nonzero brackets are $[e_i,e_j]=f_{ij}$, we mean that $[e_j,e_i]=-f_{ij}$ and that all other brackets between basis elements are zero. That an algebra defined in such a way is indeed Lie is equivalent to say that
\[J(e_i,e_j,e_k)=[e_i,[e_j,e_k]]+[e_j,[e_k,e_i]]+[e_k,[e_i,e_j]]=0,\quad \forall i<j<k.\]

There are exactly two non-isomorphic non-Carnot nilpotent 5-dimensional real Lie algebras. Using the notation in \cite{graaf}, these are defined, in the basis $(e_1,\dots,e_5)$, by the nonzero brackets:
\begin{align*}
L_{5,5}:\;& \qquad [e_1,e_2]=e_4, && [e_1,e_4]=e_5, && [e_2,e_3]=e_5;\\
L_{5,6}:\;& [e_1,e_2]=e_3, \;[e_1,e_3]=e_4,&& [e_1,e_4]=e_5, && [e_2,e_3]=e_5.
\end{align*}
The lower central series in $L_{5,5}$ is $123/4/5$ (this concise notation means $\g^2=\g_{\ge 4}$ and $\g^3=\g_{\ge 5}$) and in $L_{5,6}$ it is $12/3/4/5$. Thus we can write the Lie algebra law of $\g[r]$ in each case:
\begin{align*}
L_{5,5}:\;& \qquad  [e_1,e_2]_r=e_4, && [e_1,e_4]_r=e_5, && [e_2,e_3]_r=r^{-1}e_5;\\
L_{5,6}:\;& [e_1,e_2]_r=e_3,\; [e_1,e_3]_r=e_4,&& [e_1,e_4]_r=e_5, && [e_2,e_3]_r=r^{-1}e_5.
\end{align*}

\begin{lem}\label{solid_5d}
In both cases, the complete flag $(\g_{\ge i})_{1\le i\le 5}$ is made up of solid ideals.
\end{lem}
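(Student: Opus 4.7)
The plan is to build each of the five ideals $\g_{\ge i}$ from manifestly solid pieces using the closure operations listed in \S\ref{s_solid}: centralizers of solid ideals are solid, and preimages of solid ideals under quotients by solid ideals are solid. The ambient ideal $\g$, the zero ideal, and all terms $\g^i$ of the lower central series are solid for free; so in each case the only real work is to identify the non-central-series members of the flag as constructions from these starting points.

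For $L_{5,6}$, the distinguished basis already matches the hypotheses of Example \ref{flagfili}: this is a filiform Lie algebra of dimension $5\ge 4$ with $[e_1,e_2]=e_3$ and $[e_1,e_3]=e_4$. So that example applies verbatim and yields solidity of every $\g_{\ge i}$ with no further computation.

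For $L_{5,5}$ the identifications $\g_{\ge 4}=\g^2$ and $\g_{\ge 5}=\g^3$ dispose of two of the three nontrivial indices; the remaining cases are $\g_{\ge 2}$ and $\g_{\ge 3}$. First I would observe, from the three nonzero brackets of $L_{5,5}$, that the only one with a factor in $\g^2=\langle e_4,e_5\rangle$ is $[e_1,e_4]=e_5$; consequently $C_\g(\g^2)=\ker(\mathrm{ad}\,e_4)=\langle e_2,e_3,e_4,e_5\rangle=\g_{\ge 2}$, so $\g_{\ge 2}$ is solid as the centralizer of the solid ideal $\g^2$. Next, for $\g_{\ge 3}$, I would pass to the quotient $\g/\g^3$: it has a single nontrivial bracket between basis vectors, $[\bar e_1,\bar e_2]=\bar e_4$, whose center is immediately read off as $\langle \bar e_3,\bar e_4\rangle$. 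Its preimage under $\g\to\g/\g^3$ is exactly $\g_{\ge 3}$, and this preimage (of a center, i.e.\ the centralizer of the solid ideal $\g/\g^3$ in itself) under the quotient by the solid ideal $\g^3$ is solid.

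No step is a genuine obstacle: the argument reduces to a handful of elementary bracket computations combined with the abstract solidity-closure rules. The only point requiring a moment of attention is choosing, for each non-central-series $\g_{\ge i}$, the right algebraic characterization (centralizer, or preimage of center of a quotient) expressing it from solid ingredients.
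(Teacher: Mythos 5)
Your proof is correct and follows essentially the paper's approach. For $L_{5,5}$ you identify $\g_{\ge 2}$ as the centralizer of $\g^2=\g_{\ge 4}$ (exactly as the paper does) and $\g_{\ge 3}$ as the preimage of the center of $\g/\g^3$, which is the same as the paper's observation that $\g_{\ge 3}=Z_2(\g)$ since the first term of the upper central series here happens to be $\g^3$; for $L_{5,6}$ you streamline matters by invoking Example~\ref{flagfili} directly rather than re-running the centralizer argument, but this is the same underlying computation, so the two are interchangeable.
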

\begin{proof}
All are part of the lower central series, except $\g_{\ge 2}$ in both cases and $\g_{\ge 3}$ for $L_{5,5}$. The upper central series is $12/34/5$ for $L_{5,5}$, so $\g_{\ge 3}$ is also solid in this case. Finally, $\g_{\ge 2}$ is the centralizer of $\g_{\ge 4}$ in both cases, so is solid. 
\end{proof}

Let now, in either case, $\Lambda_r$ be a discrete cocompact subring in $\g$, with Guivarch systole $\ge r$ and covolume $\sigma(r)$, and define $\Xi_r=u(r)^{-1}\Lambda_r$, which is a discrete cocompact subring of $\g[r]$ with systole $\ge 1$. As in \S\ref{covine}, let $a_i(r)$ be the systole of the projection of $\Xi_r\cap\g^i$ in $\g^i/\g^{i+1}$, and write $A_i=\log_r(a_i(r))$.

\begin{lem}\label{Aiine}
In both cases, we have, for all $r>0$, the inequalities $A_1+A_4\ge 0$, $A_2+A_3\ge 1$, $A_5\ge 0$. In particular, $\sum_{i=1}^5A_i\ge 1$.
\end{lem}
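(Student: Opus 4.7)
I would follow the strategy of \S\ref{covine}. Since the complete flag $(\g_{\ge i})_{1\le i\le 5}$ consists of solid ideals by Lemma \ref{solid_5d}, each $\Xi_r\cap\g_{\ge i}$ is a lattice in $\g_{\ge i}$ projecting onto a rank-one lattice of covolume $a_i(r)$ in the line $\g_{\ge i}/\g_{\ge i+1}=\mathbf{R}\bar e_i$; in particular $\Xi_r\cap\g_{\ge 5}=a_5\mathbf{Z} e_5$. The inequality $A_5\ge 0$ is then immediate: it is the $j=5$ case of Lemma \ref{bas_ineq}, or equivalently the fact that any nonzero multiple of $e_5$ lying in $\Xi_r$ has Guivarch length at least $1$, forcing $a_5\ge 1$.

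The other two inequalities will be obtained by picking elements realizing the leading projections and exploiting the Lie subring property of $\Xi_r$ in $\g[r]$. For $A_2+A_3\ge 1$, I would take $x\in\Xi_r\cap\g_{\ge 2}$ and $y\in\Xi_r\cap\g_{\ge 3}$ with $x\equiv a_2e_2\pmod{\g_{\ge 3}}$ and $y\equiv a_3e_3\pmod{\g_{\ge 4}}$. Inspection of the bracket tables of $L_{5,5}[r]$ and $L_{5,6}[r]$ displayed in \S\ref{s_5dim} shows that the only nonzero bracket $[e_i,e_j]_r$ with $i\ge 2$, $j\ge 3$ is $[e_2,e_3]_r=r^{-1}e_5$, while every bracket pairing the correction terms in $\g_{\ge 3}$ (for $x$) or $\g_{\ge 4}$ (for $y$) lands in $\g_{\ge 6}=\{0\}$. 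Thus the bracket collapses to $[x,y]_r=a_2a_3r^{-1}e_5$, a positive element of $\Xi_r\cap\g_{\ge 5}=a_5\mathbf{Z} e_5$, whence $a_2a_3/r\ge a_5$ and so $A_2+A_3\ge 1+A_5\ge 1$.

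The inequality $A_1+A_4\ge 0$ follows from the exact same recipe, using the bracket $[e_1,e_4]_r=e_5$ (common to both algebras). Choose $x\in\Xi_r$ with $x\equiv a_1e_1\pmod{\g_{\ge 2}}$ and $y\in\Xi_r\cap\g_{\ge 4}$ with $y\equiv a_4e_4\pmod{\g_{\ge 5}}$. The relevant vanishings—$[e_j,e_4]_r=0$ for $j\ge 2$, $[e_1,e_5]_r=0$, and $[\g_{\ge 2},\g_{\ge 4}]_r\subset\g_{\ge 6}=\{0\}$—collapse the bracket to $[x,y]_r=a_1a_4\,e_5\in\Xi_r\cap\g_{\ge 5}$, giving $a_1a_4\ge a_5\ge 1$ and therefore $A_1+A_4\ge A_5\ge 0$. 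Summing the three inequalities yields $\sum_{i=1}^5A_i\ge 1$.

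The argument is essentially routine bracket bookkeeping, and there is no real obstacle: nilpotency of length $\le 4$ forces essentially every potential correction term to vanish. The only mildly delicate point is identifying which bracket in $\g[r]$ carries the $r^{-1}$ factor responsible for the nontrivial lower bound on $A_2+A_3$; this factor is precisely the residue of the failure of $\g$ to be Carnot, since $[e_2,e_3]_\infty=0$ while $[e_2,e_3]\neq 0$ in both algebras.
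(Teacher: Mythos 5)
Your argument is correct and is essentially the paper's proof: pick representatives in $\Xi_r$ projecting to $a_ie_i$, bracket the appropriate pairs in $\g[r]$, observe that all correction terms vanish by nilpotency, and read off $a_1a_4\ge 1$ and $r^{-1}a_2a_3\ge 1$ from systole $\ge 1$. The only cosmetic difference is that you route through the slightly sharper intermediate bound $a_ia_j r^{-m}\ge a_5$ before invoking $a_5\ge 1$, while the paper appeals to the systole condition directly.
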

\begin{proof}It is convenient to denote by $o(i)$, resp.\ $O(i)$, an unspecified element of $\g_{\ge i+1}$, resp.\ $\g_{\ge i}$. By definition, $\Lambda$ contains elements $v_1=a_1e_1+o(1)$, $v_2=a_2e_2+o(2)$, $v_3=a_3e_3+o(3)$, $v_4=a_4e_4+o(4)$, $v_5=a_5e_5$.

Then $A_5\ge 0$ means $a_5=\|v_5\|\ge 1$, which is a trivial consequence of having systole $\ge 1$. 

Then $[o(1),O(4)]=[O(1),o(4)]=[o(2),O(3)]=[O(2),o(3)]=0$ in both cases. Since for both $L_{5,5}[r]$ and $L_{5,6}[r]$ we have $[e_1,e_4]_r=e_5$ and $[e_2,e_3]_r=r^{-1}e_5$, it follows that $[v_1,v_4]_r=a_1a_4e_5$ and $[v_2,v_3]_r=r^{-1}a_2a_3e_5$. Therefore $a_1a_4\ge 1$ and $r^{-1}a_2a_3\ge 1$, which means that $A_1+A_4\ge 0$ and $A_2+A_3\ge 1$. The last inequality follows
\[A_1+A_2+A_3+A_4+A_5=  (A_1+A_4)+(A_2+A_3)+A_5\ge 0+1+0=1.\]
\end{proof}

Lemma \ref{Aiine} shows that the covolume $\cov(\Lambda_r)=\prod_{i=1}a_i$ is $\ge r$, which in turn means that $\sigma(r)=\cov(\Lambda_r)=r^D\cov(\Xi_r)\ge r^{D+1}$. 

To get a reverse inequality, let us define $\Xi'_r$ as the additive lattice with basis $(e_1,re_2,e_3,e_4,e_5)$. This is indeed a subring of $(\g,[\cdot,\cdot]_r)$. Its covolume is $r$ and its systole is $\ge 1$. We thus define $\Lambda'_r=u(r)\Xi'_r$ (an explicit basis depends on $u(r)$, which is not the same for the two Lie algebras in consideration): it has Guivarch systole $r$ and covolume $r^{D+1}$. 

Accordingly, for both Lie algebras, for this choice of norm, compatible decomposition and covolume, $\sigma(r)=r^{D+1}$. 

Actually, the previous construction of lattices $(\Xi'_r)$ satisfies some additional features: first, when $n$ is a positive integer, $\Lambda'_n$ is contained in the subring $\g[\mathbf{Z}]$. This implies that the systolic growth of the corresponding lattices is $\preceq r^{D+1}$. Since real nilpotent Lie algebra up to dimension 5 have a unique rational structure up to automorphism (see \cite{graaf}), this implies that the systolic growth of the lattices is also $\simeq r^{D+1}$, proving \ref{t_5dim}.

Second, the $G$-uniform Guivarch systole of the lattices $\Xi'_r$ is also $\ge 1$: this is because every nonzero element has the form $ne_i+o(i)$ for some integer $n$ and hence all its $G$-conjugates still has the form $ne_i+o(i)$ and thus has Guivarch norm $\ge 1$. Therefore, this proves that the $G$-uniform systolic growth of $G$ and its lattice is $\simeq r^{D+1}$ as well.

\subsection{One example with non-integral polynomial degree}

Its law is given by the symbolic notation
\[ 12|3,\;13|4,\;14|5,\;15|6,\; 16|7,\]
\[23|5,\;24|6,\; 34|7.\]

This means that the nonzero brackets are given by $[e_1,e_2]=e_3$, $[e_1,e_3]=e_4$, etc. (It appears as $\g_{7,1,1(0)}$ in Magnin's classification \cite{Mag}.)

It is filiform, that is, its nilpotency length is as large as possible for the given dimension; its lower central series is given, in symbols, as $12/3/4/5/6/7$. All the $\g_{\ge i}$ are solid (see Example \ref{flagfili}).

Once more, let $\Lambda_r$ be a discrete cocompact subring of covolume $\le r$ and Guivarch systole $\sigma(r)$, $\Xi_r=u(r)^{-1}\Lambda_r$. We define $a_i,A_i$ as previously. Then, by Lemma \ref{bas_ineq},
\[\sum_{i=j}^7A_i\ge 0,\quad 1\le j\le 7\]
Also the law in $\Lambda_r$ is given by 
\[ 12|3,\;13|4,\;14|5,\;15|6,\; 16|7,\]
\[23|r^{-1}5,\;24|r^{-1}6,\; 34|r^{-1}7.\]

\begin{lem}
$2A_1+A_5+A_6\ge 0$ and $2(A_2+A_3+A_4)\ge 3$.
\end{lem}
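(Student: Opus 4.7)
The strategy is to take, for each $i\in\{1,\dots,7\}$, an element $v_i\in\Xi_r\cap\g_{\ge i}$ with $v_i\equiv a_ie_i\pmod{\g_{\ge i+1}}$, and to extract inequalities between the $A_i$'s by computing a few brackets $[v_i,v_j]_r$. Since $\Xi_r$ is a Lie subring of $\g[r]$, each such bracket lies in $\Xi_r$, and projecting onto the $1$-dimensional quotient $\g_{\ge k}/\g_{\ge k+1}$ forces the coefficient of $e_k$ to be an integer multiple of $a_k$. The list of nonzero brackets $[e_i,e_j]_r$ with $2\le i<j$ is exactly $[e_2,e_3]_r=r^{-1}e_5$, $[e_2,e_4]_r=r^{-1}e_6$, $[e_3,e_4]_r=r^{-1}e_7$, each carrying one factor $r^{-1}$, while the brackets $[e_1,e_j]_r=e_{j+1}$ ($2\le j\le 6$) carry none. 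The first inequality will use brackets of the first type, the second will use the three $r^{-1}$-weighted brackets.

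For $2A_1+A_5+A_6\ge 0$, I use $[v_1,v_5]_r$ and $[v_1,v_6]_r$. Because every bracket $[e_k,e_l]_r$ with $k\ge 2$ and $l\ge 5$ vanishes, the only surviving contributions come from the $e_1$-component of $v_1$, yielding
\[
[v_1,v_5]_r = a_1a_5\,e_6 + o(6),\qquad [v_1,v_6]_r = a_1a_6\,e_7.
\]
Projecting on $\g_{\ge 6}/\g_{\ge 7}$ and on $\g_{\ge 7}$ respectively gives $A_1+A_5\ge A_6$ and $A_1+A_6\ge A_7$. Adding, $2A_1+A_5+A_6\ge A_6+A_7$, and the right-hand side is $\ge 0$ by Lemma~\ref{bas_ineq} at $j=6$.

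For $2(A_2+A_3+A_4)\ge 3$, I compute the three brackets $[v_2,v_3]_r$, $[v_2,v_4]_r$, $[v_3,v_4]_r$. Each of these picks up exactly one of the three $r^{-1}$-weighted brackets above, with the other cross-terms landing strictly deeper in the filtration:
\[
[v_2,v_3]_r = a_2a_3r^{-1}\,e_5 + o(5),\;\; [v_2,v_4]_r = a_2a_4r^{-1}\,e_6 + o(6),\;\; [v_3,v_4]_r = a_3a_4r^{-1}\,e_7.
\]
The three resulting inequalities $A_2+A_3\ge 1+A_5$, $A_2+A_4\ge 1+A_6$, $A_3+A_4\ge 1+A_7$, summed, give $2(A_2+A_3+A_4)\ge 3+(A_5+A_6+A_7)\ge 3$ by Lemma~\ref{bas_ineq} at $j=5$. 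The only delicate point throughout is verifying, for each bracket, that no cross-term alters the leading coefficient in the relevant quotient; this is forced by $v_i\in\g_{\ge i}$ together with the sparse multiplication table, and amounts to routine bookkeeping.
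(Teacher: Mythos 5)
Your proof is correct, and the bracket computations (which you wave off as routine bookkeeping) do check out against the sparse multiplication table of $\g[r]$. But your route differs from the paper's in a way worth noting. The paper applies the covolume-vs-systole bound directly to the \emph{sub}-lattice generated by the brackets: it observes that $[v_1,v_5]_r$ and $[v_1,v_6]_r$ together span a lattice of covolume $a_1^2a_5a_6$ inside the $2$-plane $\g_{\ge 6}$, and since that sublattice has systole $\ge 1$, its covolume is $\ge 1$, giving $2A_1+A_5+A_6\ge 0$ in a single step with no further appeal to Lemma~\ref{bas_ineq}. Likewise the three brackets $[v_2,v_3]_r$, $[v_2,v_4]_r$, $[v_3,v_4]_r$ span a lattice of covolume $r^{-3}a_2^2a_3^2a_4^2$ in the $3$-space $\g_{\ge 5}$, and covolume $\ge 1$ gives $2(A_2+A_3+A_4)\ge 3$ directly. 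You instead project each bracket onto a $1$-dimensional successive quotient $\g_{\ge k}/\g_{\ge k+1}$, extract a divisibility constraint (the leading coefficient must lie in $a_k\mathbf{Z}$), obtain the per-coordinate inequalities $A_1+A_5\ge A_6$, $A_1+A_6\ge A_7$, $A_i+A_j\ge 1+A_{i+j}$, and then invoke Lemma~\ref{bas_ineq} for $A_6+A_7\ge 0$ and $A_5+A_6+A_7\ge 0$ to cancel the extra terms. Both approaches ultimately rest on the same Minkowski-type bound, but the paper's packages it once at the level of the whole sublattice, while yours produces the more refined intermediate inequalities and then discards the slack via Lemma~\ref{bas_ineq}. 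Your intermediate inequalities are in fact exactly the form the paper uses later in Lemma~\ref{ineqvi} for the truncated Witt algebras, so your proof is a reasonable unification of the two styles.
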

\begin{proof}
There exist elements $v_i=a_ie_i+o(i)$ in $\Lambda_r$.

Then $[v_1,v_5]=a_1a_5e_6+o(6)$ and $[v_1,v_6]=a_1a_6e_7$. These two elements generate a lattice of covolume $a_1^2a_5a_6$ in $\g_{\ge 6}$. Since it has systole $\ge 1$, we deduce $a_1^2a_5a_6\ge 1$, that is, $2A_1+A_5+A_6\ge 0$;

Next $[v_2,v_3]=r^{-1}a_2a_3e_5+o(5)$, $[v_2,v_4]=r^{-1}a_2a_4e_6+o(6)$, $[v_3,v_4]=r^{-1}a_3a_4e_7$. These three elements generate a lattice of covolume $r^{-3}a_2^2a_3^3a_4^2$ in $\g_{\ge 5}$. Since it has systole $\ge 1$, we deduce $r^{-3}a_2^2a_3^3a_4^2\ge 1$, and hence $2(A_2+A_3+A_4)\ge 3$.
\end{proof}

\begin{cor}
$\sum A_i\ge 3/2$.
\end{cor}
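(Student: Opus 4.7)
The plan is to derive the corollary by a straightforward nonnegative linear combination of the bracket-based inequalities from the preceding lemma and the basic covolume inequalities supplied by Lemma \ref{bas_ineq} applied to this particular solid flag. Since we already know the quantities $A_i$ satisfy both the bracket estimates and the tail sums $\sum_{i\ge j}A_i\ge 0$ for all $j$, essentially no new analysis is needed.

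First, I would split the target sum as
\[\sum_{i=1}^{7}A_i=(A_2+A_3+A_4)+(A_1+A_5+A_6+A_7)\]
and handle the two pieces separately. The second inequality of the preceding lemma, $2(A_2+A_3+A_4)\ge 3$, immediately yields $A_2+A_3+A_4\ge 3/2$, which already accounts for the full $3/2$. It therefore suffices to verify $A_1+A_5+A_6+A_7\ge 0$.

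For this remaining bound, I would add three nonnegative quantities: the first bracket inequality $2A_1+A_5+A_6\ge 0$, together with Lemma \ref{bas_ineq} at $j=5$, which gives $A_5+A_6+A_7\ge 0$, and at $j=7$, which gives $A_7\ge 0$. Their sum equals $2A_1+2A_5+2A_6+2A_7$, so dividing by $2$ gives $A_1+A_5+A_6+A_7\ge 0$. Adding this to $A_2+A_3+A_4\ge 3/2$ yields the claimed $\sum A_i\ge 3/2$.

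I expect no serious obstacle: the real work was done in the previous lemma, where the bracket computations producing the two key inequalities required identifying the right pairs and triples of $v_i$'s whose brackets fill out a sublattice of $\g_{\ge 6}$ and $\g_{\ge 5}$ respectively. Once those are in hand, the corollary is a one-line linear programming step, and, upon returning to covolumes, it yields $\cov(\Lambda_r)=r^{D+\sum A_i}\ge r^{D+3/2}$, matching the upper bound announced in Theorem \ref{7nonin}.
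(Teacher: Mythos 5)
Your argument is correct and is essentially the paper's own proof: both use the same four inequalities, namely $2(A_2+A_3+A_4)\ge 3$, $2A_1+A_5+A_6\ge 0$, $A_5+A_6+A_7\ge 0$, and $A_7\ge 0$, combined with the same nonnegative coefficients; the paper merely writes the combination as a single display for $2\sum A_i\ge 3$ whereas you split it into two intermediate bounds.
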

\begin{proof}
Indeed, $A_5+A_6+A_7\ge 0$ by Lemma \ref{bas_ineq} and hence
\[2\sum A_i=2(A_2+A_3+A_4)+(2A_1+A_5+A_6)+(A_5+A_6+A_7)+A_7\ge 3.\qedhere\]
\end{proof}

We deduce $\sigma(r)\preceq n^{D+3/2}$ (here $D=2+\sum_2^7k=29$).

In the other direction, for $r\ge 1$ we define $\Xi'_r$ to be the lattice with basis $(e_1,\sqrt{r}e_2,\sqrt{r}e_3,\sqrt{r}e_4,e_5,e_6,e_7)$. This is a discrete cocompact subring in $\g[r]$, with Guivarch systole $1$ and covolume $r^{3/2}$. So $\Lambda'_r=u(r)M_r$ is a discrete cocompact subring of Guivarch systole $r$ and covolume $r^{D+3/2}$, so that $\sigma(r)\preceq r^{D+3/2}$. 
To conclude, $\sigma(r)\simeq r^{D+3/2}$ (with $D=29$).

Moreover, for $r$ a square integer, this lattice $\Lambda'_r$ has an integral basis, so this also provides an upper bound for the systolic growth of the lattices relative to the given rational structure.

\subsection{Truncated Witt Lie algebras}
Here we prove Theorem \ref{vira}. The method is similar to the approach in the previous examples, except the final computation, which is more complicated. So, let us begin with this computation and briefly make the connection afterwards.

\begin{lem}\label{ineqvi}
Consider the system of inequalities, with real unknowns $A_1,\dots,A_n$:
\[\begin{cases} \sum_{i=j}^nA_i\ge 0\qquad\forall j \\ A_1+A_i\ge A_{i+1}\qquad\forall i=2,\dots,n-1 \\ A_i+A_j\ge A_{i+j}+1,\qquad\forall 2\le i<j,\;i+j\le n \end{cases}\]
Then under this constraints, we have $\sum_{i=1}^n A_i\ge\lceil (n-4)/2\rceil$, and this is attained when we set $A_i=1$ for $2\le i<n/2$, and $A_i=0$ for other $i$ ($i=1$ and $n/2\le i\le n$).
\end{lem}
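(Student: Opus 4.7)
The plan is to establish the bound $\sum_{i=1}^n A_i \ge K := \lceil(n-4)/2\rceil$ by LP duality, i.e., by exhibiting an explicit non-negative combination of the given inequalities whose LHS equals $\sum A_i$ and whose RHS equals $K$. I would also verify the attainment claim by direct check: the assignment $A_i = 1$ for $2 \le i < n/2$ and $A_i = 0$ otherwise (with $A_1 = 0$) is feasible, which reduces to routine case-analysis of (III) based on whether $i$, $j$, and $i+j$ lie in $[2, n/2)$, and its value is $\sum A_i = \#\{i : 2 \le i < n/2\} = K$.

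The core building block of the lower bound is a family of $K$ applications of (III) to carefully chosen pairs $(i, j)$ with disjoint indices and $i+j$ close to $n$. For $n$ odd ($n = 2m+1$), the pairs $(i, n-i)$ for $i = 2, \dots, m$ are mutually disjoint, exhaust the set $\{2, \dots, n-2\}$, and (summed with unit weight) yield
\[
\sum_{i=2}^{n-2} A_i - K A_n \ge K.
\]
Subtracting this from $T = \sum A_i$ leaves the residual $A_1 + A_{n-1} + (K+1) A_n$, which is non-negative because it decomposes as $(A_1 + A_{n-1} - A_n) + (K+2) A_n$: the first summand is (II) at $i = n-1$, and the second is $(K+2)$ copies of (I) at $j = n$. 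Hence $T \ge K$ in the odd case.

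The main obstacle is the even case. The analogous disjoint pairing $(i, n-i)$ for $i = 2, \dots, n/2-1$ leaves a residual of the form $A_1 + A_{n/2} + A_{n-1} + (K+1) A_n$ that, once $K \ge 3$, is \emph{not} non-negative purely from (I) and (II): one can construct tuples satisfying (I) and (II) but making this residual negative (the full constraint set prevents such tuples, but only through (III)). My plan to handle this is twofold. First (inductive), proceed by induction on $n$ in steps of $2$: given a feasible $(A_i)_{i=1}^n$, construct from it a feasible $(A'_i)_{i=1}^{n-2}$ for the $(n-2)$-system with $\sum A'_i \le \sum A_i - 1$; the induction hypothesis then yields $\sum A_i \ge 1 + \lceil(n-6)/2\rceil = \lceil(n-4)/2\rceil$. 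A natural candidate reduction is to merge $A_{n-1}, A_n$ into a single coordinate and decrement one of $A_2, \dots, A_{n/2-1}$ by one unit (the "+1" of one unused (III)). Second (direct), replace the pair family by a mildly overlapping one with fractional weights, chosen so that the dual LP is solvable with all non-negative multipliers; carrying out this fractional combination for even $n$ with $K \ge 3$ is the principal technical task, and I expect to verify it inductively on the parity of $n$ using small cases ($n = 6, 8$, already handled by the unit-weight combination in the odd-case template) as the base.
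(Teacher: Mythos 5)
Your odd-$n$ argument is correct and coincides with the paper's: pair up $A_i$ with $A_{n-i}$ for $i=2,\dots,(n-1)/2$ via (III), then absorb the leftover $A_1+A_{n-1}+(K+1)A_n$ using (II) at $i=n-1$ and nonnegativity of $A_n$. Your check of the extremal assignment is also correct, and your diagnosis that the unit-weight pairing fails for even $n$ (because the residual $A_1+A_{n/2}+A_{n-1}+(K+1)A_n$ is not controllable by (I) and (II) alone) is accurate and worth stating.

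However, there is a genuine gap: the even case is exactly where the paper does the substantive work, and you leave it unproved. Your first proposed fix (reduce from $n$ to $n-2$ while shedding at least $1$ from the objective) is only a heuristic; the ``merge $A_{n-1},A_n$, decrement one of $A_2,\dots,A_{n/2-1}$'' map is not specified precisely and, once one tries to pin it down, it is not clear that any of its natural instantiations preserves feasibility. For example, decrementing some $A_j$ by $1$ typically breaks one of the (II) constraints unless you can show that constraint held with slack at least $1$, which is not given. Your second proposed fix (fractional multipliers, verified by induction on parity using $n=6,8$ as base cases) is simply a restatement of the problem: identifying a valid fractional dual certificate for general even $n$ is the actual content, not a routine verification.

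The paper's even-case argument is meaningfully more intricate than the odd case. It gathers the central indices into triples $(A_{m-3k},A_{m-3k+1},A_{m-3k+2})$ and $(A_{m+3k-3},A_{m+3k-2},A_{m+3k-1})$, pairs them cross-wise via (III) to push everything onto $A_{2m-2},A_{2m-1},A_{2m}$, and then splits into three cases according to $m\bmod 3$ to clean up the boundary terms. The residue class $m\equiv 2\pmod 3$ further requires an explicit half-weight combination
\[
A_1+A_{2m-2}+A_{2m-1}\ \ge\ \tfrac12\bigl(A_{2m-2}+2A_{2m-1}+A_{2m}\bigr),
\]
obtained by averaging three instances of (II) and (III), i.e., genuinely fractional dual multipliers. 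None of this is foreshadowed in your sketch, and there is no reason to believe your intended induction would discover it. As it stands, the proposal proves the lemma for odd $n$ only.
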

\begin{proof}
The given solution realizes the inequalities and the claimed minimal value of $\sum A_i$. 

Let us show the lower bound on $\sum A_i$. We begin with $n=2m-1$ odd ($m\ge 2$), so we have to show $\sum A_i\ge m-2$. Then
\begin{align*}\sum_{i=1}^{2m-1} A_i= &A_1+A_{2m-2}+A_{2m-1}+\sum_{i=2}^{m-1}(A_i+A_{2m-1-i})\\
 \ge & 2A_{2m-1}+\sum_{i=2}^{m-1}(A_{2m-1}+1)=m-2+mA_{2m-1}\ge m-2.
\end{align*}

The case when $n=2m$ is even (where we have to prove $\sum A_i\ge m-2$) is a bit more complicated as we gather terms by triples, which leads to discuss on the value of $m$ modulo 3. 

Let $i\ge 1$ be such that $m-3i\ge 2$ (so $m+3i-1\le 2m-3$). We consider the following sum of $6i$ terms
\begin{align}\sum_{j=m-3i}^{m+3i-1}A_i
	= & \sum_{k=1}^i (A_{m-3k}+A_{m-3k+1}+A_{m-3k+2}+A_{m+3k-3}+A_{m+3k-2}+A_{m+3k-1})\notag\\
		= & \sum_{k=1}^i (A_{m-3k}+A_{m+3k-2})+(A_{m-3k+1}+A_{m+3k-1})+(A_{m-3k+2}+A_{m+3k-3})\notag\\
		\ge & \sum_{k=1}^i (1+A_{2m-2})+(1+A_{2m})+(1+A_{2m-1})\notag\\
		&=3i+i(A_{2m-2}+A_{2m-1}+A_{2m}).\label{ineqqq1}
\end{align}
If instead, we choose $i$ such that $m-3i=1$, computing the previous inequality works in the same way, except that once we have to use an inequality of the form $A_1+A_j\ge A_{j+1}$ (instead of $A_1+A_j\ge A_{j+1}+1$), so there is one less $+1$ term and we get \begin{equation}\sum_{j=m-3i}^{m+3i-1}A_i\ge (3i-1)+i(A_{2m-2}+A_{2m-1}+A_{2m}).\label{ineqqq2}\end{equation}

If $m=3k+1$, we choose $i=k$, so that $(m-3i,m+3i-1)=(1,2m-2)$, $3i-1=m-2$; we then have, using (\ref{ineqqq2}) 
\[\sum_{i=1}^{2m} A_i \ge  (A_{2m-1}+A_{2m})+ m-2+k(A_{2m-2}+A_{2m-1}+A_{2m}) \ge m-2.\]

If $m=3k$, we choose $i=k-1$, so that $(m-3i,m+3i-1)=(3,2m-4)$, $3i=m-3$; using (\ref{ineqqq1}) we get.
\begin{align*}
\sum_{i=1}^{2m} A_i  \ge & (A_1+A_{2m-1})+(A_2+A_{2m-3})+A_{2m-2}+A_{2m}\\ & + m-3+(k-1)(A_{2m-2}+A_{2m-1}+A_{2m})\\
 \ge & m-2+A_{2m}+k(A_{2m-2}+A_{2m-1}+A_{2m})\ge m-2.
\end{align*}

Finally if $m=3k+2$, we choose $i=k$, so $(m-3i,m+3i-1)=(2,2m-3)$, $3i=m-2$. Then, using that
\begin{align*}
A_1+A_{2m-2}+A_{2m-1}= & \frac12((A_1+A_{2m-2})+(A_1+A_{2m-1})+(A_{2m-2}+A_{2m-1})\\
\ge &\frac12(A_{2m-1}+A_{2m}+(A_{2m-2}+A_{2m-1})\\ =&\frac12(A_{2m-2}+2A_{2m-1}+A_{2m}),
\end{align*} we get, incorporating the previous inequality (\ref{ineqqq1}),
\begin{align*}
\sum_{i=1}^{2m} A_i & \ge  A_1+A_{2m-2}+A_{2m-1}+A_{2m}+ m-2+i(A_{2m-2}+A_{2m-1}+A_{2m})\\
& \ge \frac12A_{2m}+\frac12(A_{2m-1}+A_{2m})+(i+1/2)(A_{2m-2}+A_{2m-1}+A_{2m})+m-2\\ & \ge m-2.\qedhere
\end{align*}
\end{proof}

To prove Theorem \ref{vira}, we can assume $n\ge 4$, so that $\g_{\ge i}$ are solid, as checked in Example \ref{flagfili}.

Then for convenience we rescale the basis by considering $f_i=n^{-1}e_i$. Then $[f_i,f_j]=\frac{(i-j)}{n}f_{i+j}$ for $i+j\le n$, so the coefficients are all $\le 1$, and we consider the $\ell^\infty$ norm with respect to this basis.

It follows that the bracket in $\g[r]$ is given by 
\[[f_1,f_i]=\frac{1-i}{n}f_{i+1}\;(i\ge 2);\quad [f_i,f_j]=\frac{r^{-1}(i-j)}{n}f_{i+j}\; (i,j\ge 2).\]

As in the previous cases, we consider a discrete cocompact subring $\Lambda_r$ of Guivarch systole $\ge r$ and $\Xi_r=u(r)^{-1}\Lambda_r$, which has systole $\ge 1$ and is a discrete cocompact subring in $\g[r]$. 
If $a_i$ is the systole of the projection on $\g_{\ge i}/\g_{\ge i+1}$, we consider an element $a_if_i+o(i)$ for all $i$. Computing the brackets and defining $A_i=\log_r(a_i)$, we deduce $A_1+A_i\ge A_{i+1}$ ($2\le i\le n-1$) and $A_i+A_j\ge A_{i+j}+1$ ($2\le i<j\le n-i$) (here we use that nonzero structure coefficients are $\le 1$ in absolute value). This gives rise to the system of inequalities solved in Lemma \ref{ineqvi}, and hence $\sum A_i\ge\lfloor (n-4)/2\rfloor$. Accordingly, the covolume of $\Xi_r$ is $\ge r^{\lfloor (n-4)/2\rfloor}$, and hence that of $\Lambda_r$ is $\ge r^{D+\lfloor (n-4)/2\rfloor}$.

(Note that the case $n=5$ covers $L_{5,6}$ from \S\ref{s_5dim}.)

\subsection{Further examples}
The next example, unlike the previous ones, do not admit a complete flag of solid ideals. Therefore, lower bounds on the systolic growth now rely on some geometric lemmas about lattices, such as Lemma \ref{covollati} below.

Consider the Lie algebra $\g=\g(2n+4)$ (here $2n+4$ is the dimension) with basis 
\[(U,V,W,Z,X_1,Y_1,\dots,X_n,Y_n)\] and nonzero brackets
\[[U,V]=W,\quad [U,W]=Z,\quad [X_i,Y_i]=Z, \; \forall 1\le i\le n.\] Its nilpotency length is 3, the derived subalgebra $\g^2$ is the plane generated by $(W,Z)$, $\g^3$ is the line generated by $Z$. We have $\dim(\g/\g^2)=2n+2$, and the homogeneous dimension is $D=2n+7$. Note that Proposition \ref{ineq} predicts $\sigma(r)\preceq r^{D+n+1}$. This is not sharp but almost:

\begin{thm}\label{g3nil}
For every fixed $n$, the systolic growth of $\g(2n+4)$, as a function of $r$, is $\simeq r^{3n+7}=r^{D+n}$.
\end{thm}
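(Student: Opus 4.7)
I would prove both halves of $\sigma(r) \simeq r^{D+n}$ within the dilation framework of Section~\ref{s_thestra}, reducing to a lattice problem in $\g[r]$.

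For the upper bound I only need an explicit construction. In $\g[r]$ take the $\mathbf{Z}$-lattice $\Xi'_r$ spanned by $U, V, W, Z, \sqrt r\,X_1, \sqrt r\,Y_1,\dots,\sqrt r\,X_n, \sqrt r\,Y_n$; the only non-trivial closure check is $[\sqrt r\,X_i,\sqrt r\,Y_i]_r = r\cdot r^{-1}Z = Z$, so this really is a subring. Its Guivarch systole in $\g[r]$ is $1$ and its covolume is $r^n$, so pulling back by $u(r)$ gives a subring $\Lambda'_r\subset\g$ with Guivarch systole $\ge r$ and covolume $r^{D+n}$, yielding $\sigma(r)\preceq r^{D+n}$.

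The lower bound is the real content. Let $\Xi = u(r)^{-1}\Lambda\subset\g[r]$ for an arbitrary discrete cocompact subring $\Lambda\subset\g$ of Guivarch systole $\ge r$; so $\Xi$ has systole $\ge 1$. The flag $\g\supset\g^2\supset\g^3\supset\{0\}$ is solid (its terms belong to the lower central series) but, as warned in the text, it admits no solid refinement, so the coordinate-wise strategy of the previous subsections breaks down. Decomposing along this flag, write $\cov(\Xi)=a_0\,a_W\,a_Z$ with $a_W,a_Z\ge 1$, so the goal becomes $a_0\ge r^n$. The key structural observation is that the canonical bracket-induced skew form $\omega_W\colon \g/\g^2\wedge\g/\g^2\to\g^2/\g^3$ has a canonical $2n$-dimensional radical $\mathrm{rad}(\omega_W)=\langle X_i,Y_i\rangle$, and that the bracket of two elements in this radical lands directly in $\g^3$, determining a canonical symplectic form $\omega_Z$ which in $\g[r]$ equals $r^{-1}\omega_{XY}$ in coordinates.

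Set $L=\Xi^{(0)}\cap \mathrm{rad}(\omega_W)$, where $\Xi^{(0)}$ is the projection of $\Xi$ to $\g/\g^2$. A rank count (exact sequence with image of rank at most $\dim(\g/\g^2)-\dim\mathrm{rad}(\omega_W)=2$) forces $L$ to have rank exactly $2n$, hence to be a full lattice in $\mathbf{R}^{2n}$. Lifting $v,w\in L$ to $\tilde v,\tilde w\in\Xi$ gives $[\tilde v,\tilde w]_r = r^{-1}\omega_{XY}(v,w)Z\in\Xi\cap\g^3 = a_Z\mathbf{Z}\,Z$ (the $W$-components of the lifts drop out since $[X_i,W]_r=[Y_i,W]_r=0$), so $\omega_{XY}(L,L)\subset r a_Z\mathbf{Z}$. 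I would then invoke the symplectic-duality lemma (Lemma~\ref{covollati}): rescaling $L'=L/\sqrt{r a_Z}$ gives $\omega_{XY}(L',L')\subset\mathbf{Z}$, so $L'\subset(L')^{*}_\omega$ and $\cov(L')\cdot\cov((L')^{*}_\omega)=1$ force $\cov(L')\ge 1$, i.e., $\cov(L)\ge (r a_Z)^n$. To convert this to a bound on $a_0$, factor $a_0=\cov(L)\cdot c_{UV}$, where $c_{UV}$ is the covolume of the quotient projection to $\g/\g^2/\mathrm{rad}(\omega_W)\cong\langle U,V\rangle$. Bracketing with $W_0=a_W W+cZ\in\Xi\cap\g^2$ yields $[v,W_0]_r=a_W\alpha_v Z\in a_Z\mathbf{Z}\,Z$, hence $\alpha_v\in(a_Z/a_W)\mathbf{Z}$, so the image is discrete; combined with the integrality $\alpha_v\beta_w-\alpha_w\beta_v\in a_W\mathbf{Z}$ from $\omega_W$, an elementary two-by-two covolume computation yields $c_{UV}\ge a_W$. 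Altogether, $\cov(\Xi)\ge a_W^2 a_Z^{n+1} r^n\ge r^n$, so $\cov(\Lambda)\ge r^{D+n}$.

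The main obstacle, as flagged in the preamble to the subsection, is precisely this absence of a complete solid flag: we cannot refine $\g/\g^2$, so lower bounds cannot come by summing scalar bracket inequalities as they did for the filiform/Witt examples. The symplectic lemma supplies the missing geometric input, and the subtle consistency point is verifying that the quotient projection to $\langle U,V\rangle$ is genuinely discrete — which is exactly what the quantization $\alpha_v\in(a_Z/a_W)\mathbf{Z}$ supplies.
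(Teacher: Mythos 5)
Your overall strategy tracks the paper's: pass to $\Xi=u(r)^{-1}\Lambda$, isolate the symplectic structure on the $\langle X_i,Y_i\rangle$-block, and invoke Lemma~\ref{covollati} to force $\cov(L)\ge(ra_Z)^n$. Your intermediate ideal differs — you work with $\mk{j}$, the preimage of $\mathrm{rad}(\omega_W)$ (the second term of the upper central series), whereas the paper instead refines the lower central flag by inserting $\mk{c}$ (the centralizer of $\mk{j}$) and $\mk{h}=\mk{c}+\mk{j}$, separating $V$ from the $X,Y$-block. Both lead to the same conclusion; your factorization $\cov(\Xi)=c_{UV}\cov(L)a_Wa_Z\ge a_W^2a_Z^{n+1}r^n$ with $(a_Wa_Z)^2a_Z^{n-1}\ge 1$ for $n\ge 1$ is if anything slightly more direct than the paper's, which must separately establish the non-obvious inequality $\alpha\beta\delta\eta\ge 1$.

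However, there is a genuine gap where you argue that $L=\Xi^{(0)}\cap\mathrm{rad}(\omega_W)$ is a full lattice. Your ``rank count'' asserts the image of $\Xi^{(0)}$ in $(\g/\g^2)/\mathrm{rad}(\omega_W)\cong\mathbf{R}^2$ has rank at most $2$ — but a priori this image is just an abstract finitely generated subgroup of $\mathbf{R}^2$, and can have rank up to $2n+2$; bounding its rank by $2$ is exactly equivalent to $L$ being a full lattice, so the argument is circular. The later ``discreteness'' remark from $\alpha_v\in(a_Z/a_W)\mathbf{Z}$ does not close this gap either, since it constrains only the $U$-coordinate. Related to this, your claim that $\g\supset\g^2\supset\g^3\supset\{0\}$ ``admits no solid refinement'' is incorrect — the paper itself uses such a refinement (what fails is the existence of a \emph{complete} solid flag, which is a different statement), and your own $\mk{j}$ is the refinement you need. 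The fix is one line: $\mk{j}$ is the second term of the upper central series, hence absolutely $\Aut$-invariant and therefore solid (Theorem~\ref{t_solid}); consequently $\Xi\cap\mk{j}$ is a lattice in $\mk{j}$, so $L$ is a full lattice in $\mathrm{rad}(\omega_W)$ and the projected lattice in $\mathbf{R}^2$ is genuine. With that repair your proof is a valid alternative to the paper's.
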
 

The proof relies on the following general lemma.

\begin{lem}\label{covollati}
Let $V$ be a $d$-dimensional real vector space ($d$ even) with a fixed Lebesgue measure. Let $\phi$ be a symplectic form of determinant 1. Let $\Gamma$ be a lattice in $V$ such that for all $x,y\in\Gamma$ we have $\phi(x,y)\in\mathbf{Z}$. Then the covolume of $\Gamma$ is $\ge 1$.

If for some $s>0$, we have $\phi(x,y)\in s\mathbf{Z}$ for all $x,y\in\Gamma$, then the covolume of $\Gamma$ satisfies $\cov(\Gamma)\ge s^{d/2}$.
\end{lem}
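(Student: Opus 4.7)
The plan is to use the Pfaffian, which directly relates the symplectic form, the Lebesgue measure, and the integrality hypothesis.

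Pick a $\mathbf{Z}$-basis $(e_1,\dots,e_d)$ of $\Gamma$, and form the skew-symmetric Gram matrix $M=(\phi(e_i,e_j))_{1\le i,j\le d}$. The first step is to interpret the hypothesis ``$\phi$ has determinant $1$'' as saying that the top exterior power $\phi^{d/2}/(d/2)!$ coincides (up to sign) with the chosen Lebesgue volume form on $V$. Using the classical identity
\[\frac{\phi^{d/2}}{(d/2)!}(e_1,\dots,e_d)=\mathrm{pf}(M),\]
this identifies $|\mathrm{pf}(M)|$ with the covolume $\cov(\Gamma)$.

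The second step is to exploit integrality. Under the assumption $\phi(x,y)\in s\mathbf{Z}$ for all $x,y\in\Gamma$, the matrix $s^{-1}M$ has integer entries and is skew-symmetric, hence its Pfaffian is an integer. Since $\phi$ is nondegenerate and $\Gamma$ spans $V$, the form $\phi$ restricted to the lattice is nondegenerate over $\mathbf{Q}$, which forces $\det(M)\ne 0$, hence $\mathrm{pf}(s^{-1}M)\ne 0$. Therefore $|\mathrm{pf}(s^{-1}M)|\ge 1$, and by homogeneity of the Pfaffian ($\mathrm{pf}(\lambda M)=\lambda^{d/2}\mathrm{pf}(M)$) this gives $|\mathrm{pf}(M)|\ge s^{d/2}$. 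Combining with the first step yields $\cov(\Gamma)\ge s^{d/2}$, and the case $s=1$ is the first assertion.

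The main point to handle carefully is the convention behind ``determinant $1$'' for a symplectic form and the corresponding identification of $|\mathrm{pf}(M)|$ with $\cov(\Gamma)$; once this is nailed down, everything reduces to the two elementary Pfaffian facts used above (integrality on integer skew-symmetric matrices, and degree-$d/2$ homogeneity). No genuine obstacle is expected; the argument is essentially a one-line computation once the Pfaffian formalism is in place.
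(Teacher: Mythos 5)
Your proof is correct, but it takes a genuinely different route from the paper's. The paper argues by induction on $d/2$: it fixes a primitive vector $e_1\in\Gamma$, finds $e_d\in\Gamma$ with $\phi(e_1,e_d)=m$ generating $\phi(e_1,\Gamma)=m\mathbf{Z}$, shows that $\Gamma$ splits as $(\Gamma\cap H)\oplus\mathbf{Z} e_d$ with $H=e_1^{\perp_\phi}$, and then $\Gamma\cap H=\mathbf{Z} e_1\oplus\Gamma'$ with $\Gamma'$ a lattice in a symplectic space of dimension $d-2$; normalizing measures so that the product matches the ambient one, induction gives $\cov(\Gamma)=m\cdot\cov(\Gamma')\ge 1$, and the general $s$ follows by rescaling the lattice to $s^{-1/2}\Gamma$. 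You replace this symplectic reduction by the Pfaffian: with $M=(\phi(e_i,e_j))$ the Gram matrix on a $\mathbf{Z}$-basis, the identification $\cov(\Gamma)=|\mathrm{pf}(M)|$ (equivalently $\det M=\cov(\Gamma)^2$, using that $\phi$ has unit-determinant Gram matrix in a unit-covolume basis), together with integrality and nonvanishing of $\mathrm{pf}(s^{-1}M)\in\mathbf{Z}$ and the homogeneity $\mathrm{pf}(\lambda M)=\lambda^{d/2}\mathrm{pf}(M)$, gives the bound in one stroke and handles general $s$ directly. What each buys: the paper's induction is elementary and self-contained (it essentially re-derives the relevant half of the existence of a symplectic/Frobenius adapted basis), whereas your Pfaffian argument is shorter and more conceptual at the cost of invoking the standard Pfaffian facts (integrality over $\mathbf{Z}$, $\mathrm{pf}^2=\det$, degree-$d/2$ homogeneity, and the normalization $\phi^{d/2}/(d/2)!=\pm\omega$). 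The one point you flag as needing care — the ``determinant $1$'' convention and the resulting identification $|\mathrm{pf}(M)|=\cov(\Gamma)$ — is indeed the only thing to nail down, and your interpretation is the right one; the analogous normalization issue appears in the paper's proof when it chooses Lebesgue measures on the subspaces so the product measure matches the ambient one.
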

\begin{proof}
We argue by induction on $d/2$. The case $d=0$ is trivial. Fix a primitive element $e_1$ in $\Gamma$. Let $H$ be its orthogonal for $\phi$ and $H'=H/\mathbf{R} e_1$. Then $\phi(e_1,\Gamma)$ is a nonzero subgroup $m\mathbf{Z}$ of $\mathbf{Z}$, with $m\ge 1$. Let $e_d$ be an element in $\Gamma$ with $\phi(e_1,e_d)=m$. 
Every element in $x\in\Gamma$ can be written in a unique way as $\lambda_x e_d+h_x$ with $(\lambda_x,hx)\in\mathbf{R}\times H$. Then $\phi(e_1,x)=\lambda_x m$ belongs to $m\mathbf{Z}$; this shows that $\lambda_x\in\mathbf{Z}$, and hence $h_x\in\Gamma$ as well. This shows that $\Gamma=(\Gamma\cap H)\oplus \mathbf{Z} e_d$. Since $e_1$ is primitive, we can write $\Gamma\cap H=\mathbf{Z} e_1\oplus \Gamma'$, where $\Gamma'$ is a lattice in a hyperplane $V'$ of $H$. We fix Lebesgue measures on $V'$ and on $(\mathbf{R} e_1\oplus \mathbf{R} e_d)$ so that the both restrictions of $\phi$ to these subspaces has determinant 1. So the product measure matches with the original Lebesgue measure on $V$. By induction, the covolume $c'$ of $\Gamma'$ in $V'$ is $\ge 1$, and the covolume of $\mathbf{Z} e_1\oplus \mathbf{Z} e_d$ in the plane it spans is $m$. So the covolume of $\Gamma$ is $c'm\ge 1$.

For the second result, we apply the hypothesis to the lattice $s^{-1/2}\Gamma$, which has covolume $\ge 1$; its covolume is also $s^{-d/2}\cov(\Gamma)$. This proves that $\cov(\Gamma)\ge s^{d/2}$.\end{proof}

\begin{proof}[Proof of Theorem \ref{g3nil}]
First define $\Lambda_r$ as the lattice with basis $rU$, $rV$, $rX_i$, $r^2Y_i$ ($1\le i\le n$), $r^2W$, $r^3Z$. Its covolume is $r^{7+3n}$. Its systole is $r$. It is a subring, by a straightforward verification. So the systolic growth is $\preceq r^{7+3n}$.

We have just described the lower central series. Also, the center is reduced to the line generated by $Z$ and the second term in the ascending central series is the codimension 2 subspace $\mk{j}$ generated by all basis vectors except $U,V$. The centralizer $\mk{c}$ of $\mk{j}$ is 3-dimensional with basis $(V,W,Z)$, and $\mk{h}=\mk{c}+\mk{j}$ is the hyperplane generated by all basis vectors except $U$. 

So we have the inclusions of solid ideals
\[\{0\}\stackrel{Z}\subset\g^3\stackrel{W}\subset\g^2\stackrel{V}\subset\mk{c}\stackrel{X_1,\dots,Y_n}\subset\mk{h}\stackrel{U}\subset\g\]

The dilation $u(r)$ is given by multiplication by $r$ on the subspace with basis $(U,V,X_1,\dots,Y_n)$, by $r^2$ on $W$ and by $r^3$ on $Z$. The nonzero brackets in $\g[r]$ are thus given by
\[[U,V]_r=W,\quad [U,W]_r=Z,\quad [X_i,Y_i]_r=r^{-1}Z, \; \forall 1\le i\le n.\]

Let a discrete cocompact subring $\Lambda_r$ have systole $\ge r$. Define $\Xi_r=u(r)^{-1}\Lambda_r$. So $\Xi_r$ intersects each of these ideals in a lattice, and the projection modulo the previous ideal, written in the canonical basis, gives generators $\alpha U$, $\beta V$, $\delta W$, $\eta Z$ (we choose the constant to be positive) and a lattice $\Gamma$ in the space with basis $(X_1,\dots,Y_n)$, elements. 

So $\Lambda_r$ contains an element $u$ of the form $\alpha U+x$ (with $x$ a combination of the other generators), and an element $v$ of the form $\beta V+tW+t'Z$, and an element $w$ of the form $\delta W+t''Z$. Then $[u,v]=\alpha\beta W+\alpha t Z$, and $[u,w]=\alpha\delta Z$. Since the intersection of $\Lambda_r$ with the plane with basis $(W,Z)$ has covolume $\delta\eta$ and the covolume of its subgroup generated by $[u,v]$ and $[u,w]$ is $\alpha^2\beta\delta$, we get $\alpha^2\beta\delta\ge\delta\eta$, or equivalently $\alpha^2\beta\ge\eta$.

Lemma \ref{bas_ineq} implies that all of $\beta\delta\eta$, $\delta\eta$, $\eta$ are $\ge 1$. It does not say that $\alpha\beta\delta\eta\ge 1$ because we have one intermediate term in the filtration, but we can deduce it:
\[(\alpha\beta\delta\eta)^2=(\alpha\beta\alpha)\beta\delta^2\eta^2\ge \eta\beta\delta^2\eta^2=(\beta\delta\eta)(\delta\eta)\eta\ge 1.\]

On the other hand, we see that the standard symplectic form $\phi$ on the subspace generated by $X_1,\dots,Y_n$ (for which $[X_i,Y_i]=\phi(X_i,Y_i)Z$) maps $\Gamma\times\Gamma$ into $r\eta\mathbf{Z}$. It follows from Lemma \ref{covollati} that $\cov(\Gamma)\ge (r\eta)^n\ge r^n$.

Therefore, the covolume of $\Xi_r$ satisfies
\[\cov(\Xi_r)=\cov(\Gamma)\alpha\beta\delta\eta\ge r^n.\]
Thus the systolic growth satisfies $\sigma(r)\ge r^{D+n}$.
\end{proof}

\begin{rem}\label{quadra_h}
The above proof can be extended to a larger class of Lie algebras, namely for $n\ge 0$ and $k\ge 3$, define $\g(k+2n,k)$ with basis $U_1,\dots,U_k$, $ X_1,Y_1,\dots,X_n,Y_n$, and nonzero brackets $[U_1,U_i]=U_{i+1}$ ($2\le i\le k-1$), $[X_i,Y_i]=U_k$. Then its systolic growth is $\simeq r\mapsto r^{D+n(k-3)}$. (For $k=5$, the nilpotency length is 4; this yields $\sigma(r)\simeq r^{D+2n}$ while Proposition \ref{ineq} predicts $\sigma(r)\preceq r^{D+2n+2})$.

If we write the dimension as $d=2n+k$ and write $h=n(k-3)$, we see that $h$ is maximal when $n=(d-3)/4$, that is, for $n$ integer, $n=(d-3+e)/4$ with $e$ an integer with $|e|\le 2$. Then for such $n$ the computation provides $h=\frac18((d-3)^2-e^2)$, with $e^2\in\{0,1,4\}$ ($e^2=0$ for $d\stackrel{4}\equiv 3$, $e^2=1$ for $d$ even, $e^2=4$ for $d\stackrel{4}\equiv 1$).

Also, if we choose $\lambda=(1+\sqrt{5})/2\sim 1.618\dots$, then we see that for $\g(n,\lfloor \lambda n\rfloor)$, we have $h=(\alpha+O(1/n))\frac{d^2}6$, with $\alpha=\frac{6\lambda}{(2+\lambda)^2}\sim 0.742\dots$. This shows that $h$ can behave as fast as the square of the dimension (we choose to write it as a factor of $\frac{d^2}6$, since $h\le \frac{d^2}6$ by Propositions \ref{ineq} and \ref{ub_kc}.
\end{rem}

\appendix

\section{Lattices and discrete cocompact subrings: back and forth}\label{app_nil}

We prove here Lemma \ref{llazard}. This lemma was mentioned to the author by Yves Benoist. While it essentially follows from Malcev's ideas, I could not find a reference with this statement precisely written. The closest I am aware of are in the books \cite[Chap.\ 6]{Seg} and \cite[\S 5.4]{corwin}:
\begin{itemize}
\item \cite[Chap.\ 6, Theorem 5]{Seg} essentially says that for every lattice $\Gamma$ is trapped between two lattices whose logarithms are additive subgroups, so that the index between the two is bounded only in terms of the dimension of the ambient Lie group.
\item In \cite[\S 5.4]{corwin}, the main result in the direction of Lemma \ref{llazard} is Proposition 5.4.8 of that book, which states that every lattice $\Gamma$ has a finite index subgroup $\Gamma'$ such that $\log\Gamma'$ is a subring (and furthermore, $\log\Gamma$ is a finite union of additive cosets of $\log\Gamma'$). Unlike in \cite{Seg}, no uniform bound is given.
\end{itemize}

These results are not enough to prove Lemma \ref{llazard} for two reasons:
\begin{itemize}
\item they do not yield anything in the direction (\ref{llaz1}) of Lemma \ref{llazard};
\item in the direction (\ref{llaz2}), they provide partial statements. In \cite{Seg} it only yields subgroups whose logarithm is an additive lattice (but maybe not a Lie subring), and in \cite{corwin} the uniformity not given.
\end{itemize}


Let us start with a general elementary fact on the covolume.

\begin{prop}\label{covol_admu}
Let $\g$ be a real fd nilpotent Lie algebra. Endow it with a choice a Lebesgue (= Haar) measure, and endow the corresponding simply connected Lie group $G$ with the push-forward of this measure by the exponential map. Then the latter is a Haar measure on $G$. Moreover, for every lattice $\Gamma$ such that $\log\Gamma$ is an additive subgroup, the covolume of $\Gamma$ in $G$ equals the covolume of $\log\Gamma$ in $\g$. In particular, for any two lattices $\Gamma\subset\Gamma'$ such that $\log\Gamma$ and $\log\Gamma'$ are both additive subgroups, the index $[\Gamma':\Gamma]$ equals the index $[\log\Gamma':\log\Gamma]$.
\end{prop}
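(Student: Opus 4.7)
I would start from the classical formula
\[ d\exp_X=(dL_{\exp X})_e\circ\frac{\mathrm{id}-e^{-\mathrm{ad}(X)}}{\mathrm{ad}(X)}. \]
Since $\g$ is nilpotent, $\mathrm{ad}(X)$ is nilpotent, so the right factor equals $\mathrm{id}$ plus a nilpotent endomorphism, hence is unipotent and has determinant $1$. Consequently the pull-back by $\exp$ of a left-invariant volume form on $G$ is a constant volume form on $\g$, i.e.\ a Lebesgue measure. Equivalently $\exp_*\mu_\g$ is left Haar on $G$, and is two-sided Haar since nilpotent groups are unimodular.

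\textbf{Plan for the covolume equality.} Set $H=\log\Gamma$, $H^i=H\cap\g^i$ (lower central filtration), and let $H_i$ denote the image of $H^i$ in the abelian quotient $\g^i/\g^{i+1}$; since $H^i\cap\g^{i+1}=H^{i+1}$, we have canonically $H_i=H^i/H^{i+1}$. By Malcev's theory of lattices in nilpotent Lie groups, $\Gamma\cap\exp(\g^i)$ is a cocompact lattice of $\exp(\g^i)$. Since $\exp:\g^i\to\exp(\g^i)$ is a homeomorphism, $H^i$ is a discrete cocompact subset of $\g^i$, and being an additive subgroup (as $H$ is and $\g^i$ is a linear subspace), it is an additive lattice in $\g^i$; hence $H_i$ is a lattice of $\g^i/\g^{i+1}$. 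I would then iterate the multiplicativity of covolume along short exact sequences of unimodular locally compact groups, on both sides of the filtration:
\[ \cov(H,\g)=\prod_{i=1}^c\cov(H_i,\g^i/\g^{i+1}),\quad \cov(\Gamma,G)=\prod_{i=1}^c\cov(\bar\Gamma_i,\exp(\g^i)/\exp(\g^{i+1})), \]
with $\bar\Gamma_i$ the image of $\Gamma\cap\exp(\g^i)$ in $\exp(\g^i)/\exp(\g^{i+1})$. On each abelian quotient $\g^i/\g^{i+1}$, the exponential reduces to the identity of groups (the bracket is trivial modulo higher degree) and identifies $H_i$ with $\bar\Gamma_i$; so the two products agree term by term, giving $\cov(\Gamma,G)=\cov(H,\g)$. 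The index statement follows immediately, since for $\Gamma\subset\Gamma'$ each index is the ratio of the corresponding covolumes, and those ratios now match.

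\textbf{Main obstacle.} The delicate point is to keep Haar-measure normalizations coherent across the iteration. I would handle it by fixing once and for all a vector-space decomposition $\g=\bigoplus_i\mk{v}_i$ with $\mk{v}_i$ a supplement of $\g^{i+1}$ in $\g^i$, so that the chosen Lebesgue measure $\mu_\g$ factorizes as a product of Lebesgue measures on the $\mk{v}_i\simeq\g^i/\g^{i+1}$. Then on the group side the Haar measures on $\exp(\g^i)$ and on $\exp(\g^i)/\exp(\g^{i+1})$ are automatically coherent with these via the first part of the proposition, which is precisely what is needed to compare the two product formulas factor by factor.
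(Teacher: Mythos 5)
Your proposal is correct, and it reorganizes the argument in a way that differs from the paper's. For the Haar-measure claim you invoke the classical derivative-of-exponential formula and observe that $\tfrac{\mathrm{id}-e^{-\mathrm{ad}(X)}}{\mathrm{ad}(X)}$ is unipotent, hence has determinant~$1$; this is a clean, self-contained, and standard argument. The paper does not isolate this step: it proves the whole proposition at once by induction on $\dim\g$, cutting by a single central ideal $\mk{n}$ (with a complement $\mk{v}$ so that $\mu_\g=\mu_{\mk{n}}\otimes\mu_{\mk{v}}$), applying the induction hypothesis to $\mk{n}$ and $\g/\mk{n}$, and multiplying covolumes along the short exact sequence $N\to G\to G/N$. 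Your covolume computation is the ``unrolled'' version: you filter by the full lower central series and multiply along the whole flag in one go, identifying each graded quotient on the group side with the corresponding $\g^i/\g^{i+1}$ via the fact that $\exp$ is the identity on abelian quotients. That is also fine, but it puts the entire burden of checking that the Haar normalizations are coherent along the flag into a single step; your fix (fix $\g=\bigoplus_i\mk{v}_i$ and factor the Lebesgue measure) is the right idea, though one still has to note that the map $(x,y)\mapsto\exp(x)\exp(y)$ from $\g^{i+1}\times\mk{v}_i$ to $\exp(\g^i)$, read through $\log$, is a unipotent polynomial shear and hence measure-preserving — which is again your determinant-$1$ observation at work. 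The paper's one-ideal-at-a-time induction localizes exactly this coherence check to a single split $\mk{n}\oplus\mk{v}$, which is slightly tidier; your version has the advantage of making the Jacobian-$1$ fact explicit and reusable for the Haar statement. Both routes are valid.
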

\begin{proof}
If $\g$ is abelian, this clear. Otherwise, we argue by induction on $\dim\g$. Let $\mathfrak{n}$ be any term in the central series, distinct from $\g$ and $\{0\}$ (e.g., the derived subalgebra or the center). Fix any complement subspace $\mk{v}$ of $\mk{n}$; endow them with Lebesgue measures so that the product measure matches with the Lebesgue measure on $\g$. Write $N=\exp\mathfrak{n}$. Endow $\g/\mk{n}$ with the Lebesgue measure image of that of $\mk{v}$, and $G/N$ with its push-forward by the exponential. Denote the projections as $p:\g\to\g/\mk{n}$ and $\pi:G\to G/N$. 

Then $\pi(\Gamma)$ is a lattice, whose logarithm is $p(\log\Gamma)$. By induction, the result holds true in $\mk{n}$ and $\g/\mk{n}$. Thus, with self-explanatory notation, the covolume satisfies
\[\cov_G(\Gamma)=\cov_{N}(N\cap\Gamma)\cov_{G/N}(\pi(\Gamma))\] \[=\cov_{\mk{n}}(\mk{n}\cap\log\Gamma)\cov_{\g/\mk{n}}(p(\log\Gamma))=\cov_{\g}(\log\Gamma).\qedhere\]  
\end{proof}

We now proceed state a more precise and robust version of Lemma \ref{llazard} and then prove it.

Given a nilpotent Lie $\mathbf{Q}$-algebra, we can endow it with a group law by setting $xy=\log(\exp(x)\exp(y))$, given by the Baker-Campbell-Hausdorff formula; we still call it multiplication and denote it by $\cdot$ (or no sign). It is thus endowed with the addition, the scalar multiplication, the Lie bracket, and the multiplication. This convention is very convenient, although somewhat misleading, because $0$ is the unit of the multiplication law, the multiplication is not distributive with respect to the addition, and we have $x^n=nx$ for all $n\in\mathbf{Z}$.

Let us write the Baker-Campbell-Hausdorff formula as 
\[xy=\sum_{i\ge 1}B_{i}(x,y)=(x+y)+\frac12[x,y]+\frac1{12}([x,[x,y]]-[y,[x,y]])+\dots,\]
with $B_i$ homogeneous of degree $i$. This formula precisely makes sense in the pro-nilpotent completion of the free Lie $\mathbf{Q}$-algebra $\mk{f}$ on $(x_1,x_2)$. Let $m_i$ be the least common denominator of terms\footnote{The definition of $m_i$ is a bit sloppy, because ``least common denominator of terms" refers to some choice of basis. It can be made more rigorous as follows: let $\mk{f}=\bigoplus_{i\ge 1}\mk{f}_i$ be the standard grading of $\mk{f}$, so that $B_i\in\mathfrak{f}_i$. Let $\Lambda=\bigoplus_{i\ge 1}\Lambda_i$ be the Lie subring generated by $(x_1,x_2)$, which is a free Lie $\mathbf{Z}$-algebra on $(x_1,x_2)$. Here $\Lambda_i$ is the additive subgroup generated by brackets of length $i$ in $x_1$ and $x_2$. Define $m_i$ as the smallest positive integer $m$ such that $B_i\in m^{-1}\Lambda_i$. Now if $B_i$ is written with respect to a $\mathbf{Z}$-basis of $\Lambda_i$, then $m_i$ is indeed the smallest common denominator of the coefficients of $B_i$, and indeed the Baker-Campbell-Hausdorff is usually written with respect to such a basis (namely a Hall basis, see \cite[\S II.2.11]{Bou}).} in $B_i$. Small values are given by
\[(m_1,m_2,\dots)=(1,2,12,24,720,1440,30240\dots).\]

\begin{defn}
In a Lie $\mathbf{Q}$-algebra, we define a strong subring as an additive subgroup stable, for all $i\ge 2$, under the rescaled iterated commutator \[(x_1,\dots,x_i)\mapsto m_i^{-1}[x_1,[x_2,\dots,[x_{i-1},x_i]\cdots]].\]
\end{defn}

For $i=2$, this means the stability under $(x,y)\mapsto \frac12[x,y]$; in particular a strong subring is a Lie subring. In case the nilpotent Lie algebras, it also follows that a strong subring is closed under the group law defined by the Baker-Campbell-Hausdorff formula. There is an obvious notion of strong subring generated by a subset. (Note that the definition of strong subring also makes sense in a $c$-step nilpotent Lie algebra over a ring of characteristic some power of any prime $p>c$, because $m_i$ is coprime to $p$ for all $i<p$.)

We will need the following simple lemma.

 \begin{lem}\label{strsubgen}
Let $\g$ be a nilpotent Lie $\mathbf{Q}$-algebra. Then the strong subring generated by any finite subset is finitely generated as an additive group.
\end{lem}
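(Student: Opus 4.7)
The plan is to show that the strong subring $\Lambda$ generated by a finite set $S=\{s_1,\dots,s_n\}$ equals the $\mathbf{Z}$-span of finitely many explicit ``rescaled commutator tree'' evaluations, and then to invoke finiteness directly.

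First, I would introduce a tree formalism. Let $c$ be the nilpotency class of $\g$. A \emph{rescaled commutator tree} is a finite rooted tree whose internal vertices each have arity $i\in\{2,\dots,c\}$ (representing the operation $\phi_i:(y_1,\dots,y_i)\mapsto m_i^{-1}[y_1,[y_2,\dots,[y_{i-1},y_i]\cdots]]$), and whose leaves are each labeled by an element of $S$; recursive evaluation yields an element $\phi_T\in\g$. The critical observation is a size bound: by induction on $T$, using $[\g^a,\g^b]\subseteq\g^{a+b}$, one has $\phi_T\in\g^{L(T)}$, where $L(T)$ is the number of leaves of $T$. In particular $\phi_T=0$ whenever $L(T)>c$. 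Since each internal vertex has arity $\ge 2$, a tree with $L\le c$ leaves has at most $c-1$ internal vertices, so only finitely many tree shapes are relevant, and there are at most $n^c$ leaf labelings per shape. This produces a finite set $\mathcal{E}\subset\g$ of tree values.

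The heart of the argument is to check that the $\mathbf{Z}$-span $M$ of $\mathcal{E}$ coincides with $\Lambda$. The inclusion $M\subseteq\Lambda$ follows by induction on tree depth, since $\Lambda$ contains $S$ and is stable under each $\phi_i$. For the reverse inclusion it suffices to verify that $M$ is itself a strong subring containing $S$: containment of $S$ is witnessed by one-leaf trees and additive closure holds by construction, so the main point is stability under each $\phi_i$. Expanding arbitrary elements of $M$ as $\mathbf{Z}$-combinations of tree values and using $\mathbf{Z}$-multilinearity of the Lie bracket, the application of $\phi_i$ to a tuple of tree values is the evaluation of the tree obtained by grafting them onto a new root of arity $i$, which is again a rescaled commutator tree, and hence evaluates to an element of $\mathcal{E}\cup\{0\}$.

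The only real obstacle is this last stability step, but once the tree formalism is fixed it is a routine bookkeeping identity combining multilinearity of the bracket with the recursive definition of $\phi_T$. Nilpotency of $\g$ enters purely through the leaf bound $L(T)\le c$, which keeps $\mathcal{E}$ finite, and hence $\Lambda$ finitely generated as a $\mathbf{Z}$-module.
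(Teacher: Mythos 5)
Your argument is correct and is essentially the paper's proof in a different but equivalent formalism: where you encode the iterated operations as rooted trees with arity between $2$ and $c$ and observe $\phi_T\in\g^{L(T)}$ (so trees with more than $c$ leaves evaluate to zero), the paper builds the same finite family as multilinear laws on at most $c$ variables via iterated substitution and then takes the image of finitely many tensor powers of the additive span of $S$. The grafting-stability step you flag as the crux is exactly the closure-under-substitution point in the paper, so the two proofs coincide in substance.
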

\begin{proof}
The basic idea is that when ``iterating" the various rescaled brackets in the definition of strong subring, we go deeper in the central series and only a finite number of steps (thus involving bounded denominators) are necessary. 

To make this rigorous, denote by $u_2,\dots,u_c$ the above rescaled iterated brackets. These are multilinear laws. Given any two multilinear law $v,w$ on $k,\ell$ variables and $1\le i\le k$, we obtain a new multilinear law, on $k+\ell-1$ variables, by replacing, in $v(x_1,\dots,x_k)$, $x_i$ with $w(y_1,\dots,y_i)$. Note that if $k,\ell\ge 2$ then $k+\ell+1>\max(k,\ell)$. Therefore if we compute all possible iterated substitutions obtained starting from $u_2,\dots,u_c$, then we obtain only finitely multilinear laws on $\le c$ variables, say $v_1,\dots,v_q$, say with $v_i$ of degree $s_i\ge 2$. Then given any additive subgroup $A\subset\g$, the strong subring it generates is the image of the linear map 
\[\bigoplus_{i=1}^qA^{\otimes s_i}\stackrel{\bigoplus v_i}\longrightarrow\g.\]
If $A$ is finitely generated, then all these tensor powers are finitely generated as additive abelian groups, and this proves the result. (Note that we do not have to care about $u_1$ since $m_1=1$; if we had $m_1=2$, which would mean stability under division by 2, the result and its proof would fail.)

Also note that if $K$ is a field of characteristic zero and $R$ a subring, the notion of strong $R$-subring of a Lie algebra over $K$ makes sense, and the lemma can be adapted to this context with essentially no change in the proof (considering tensor products over $R$).
\end{proof}

Lemma \ref{llazard} is therefore a particular case of the following:

\begin{lem}
Let $d$ be a non-negative integer. There exists a constant $C(d)$ such that for every real nilpotent Lie algebra of dimension $\le d$, for every $\Gamma\subset G$ that is either a discrete cocompact subring in $\g$, or a lattice in the simply connected nilpotent Lie group $(\g,\cdot)$, there exist strong subrings $\Gamma_1,\Gamma_2$ in $\g$ with $\Gamma_1\subset\Gamma\subset\Gamma_2$ and such that the index $[\Gamma_2:\Gamma_1]$ (which is the same for both addition and multiplication, by Proposition \ref{covol_admu}) is finite and bounded above by $C$.
\end{lem}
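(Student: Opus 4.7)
The strategy is in two stages: first, produce a strong subring $\Gamma_2 \supset \Gamma$ with $[\Gamma_2:\Gamma]$ bounded in $d$; then derive $\Gamma_1$ by a uniform rescaling trick.

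Given such a $\Gamma_2$, set $n := [\Gamma_2:\Gamma]!$ and $\Gamma_1 := n\Gamma_2$. Then $n\Gamma_2$ is again a strong subring: for $x_1,\dots,x_i \in \Gamma_2$ with $i \ge 2$,
\[m_i^{-1}[nx_1,[nx_2,\dots,[nx_{i-1},nx_i]\cdots]] \;=\; n^i\,m_i^{-1}[x_1,\dots,x_i] \;\in\; n\Gamma_2,\]
using that $m_i^{-1}[x_1,\dots,x_i] \in \Gamma_2$ and $n^{i-1}\Gamma_2 \subset \Gamma_2$. Moreover $\Gamma_1 \subset \Gamma$: in the Baker--Campbell--Hausdorff identification one has $n\gamma = \gamma^n$, and the core of $\Gamma$ in $\Gamma_2$ is a normal subgroup of index dividing $[\Gamma_2:\Gamma]! = n$ (by the standard embedding of the coset action in a symmetric group), so every $n$-th power in $\Gamma_2$ lies in $\Gamma$. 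Thus $\Gamma_1 \subset \Gamma \subset \Gamma_2$ with $[\Gamma_2:\Gamma_1] = n^d$ bounded.

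For $\Gamma_2$ itself, take the strong subring of $\g$ generated by $\Gamma$; by Lemma \ref{strsubgen} this is a lattice in $\g$. The task reduces to bounding $[\Gamma_2:\Gamma]$, which I do differently in the two cases. In Case 1 ($\Gamma$ a discrete cocompact Lie subring), the proof of Lemma \ref{strsubgen} realises $\Gamma_2$ as a sum of images of finitely many multilinear laws of the form ``$m^{-1}\times$ iterated bracket'' applied to tensor powers of $\Gamma$; bracket-closedness forces each image into $M^{-1}\Gamma$ with $M := m_2 m_3 \cdots m_c$ ($c \le d$), so $[\Gamma_2:\Gamma] \le M^d$. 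In Case 2 ($\Gamma$ a lattice in $(G,\cdot)$), I combine a uniform form of \cite[Chap.~6, Theorem 5]{Seg}---giving multiplicative lattices $\Gamma^- \subset \Gamma \subset \Gamma^+$ with additive logarithms $\Lambda^\pm := \log\Gamma^\pm$ and $[\Lambda^+:\Lambda^-] \le C_0(d)$---with the inverse Baker--Campbell--Hausdorff formula: since group commutators of $\Gamma^+$-elements lie in $\Gamma^+$, one can express each Lie bracket $[x,y]$ as a $\mathbf{Q}$-linear combination of iterated group commutators with denominators bounded by $N = N(d)$, giving $[\Lambda^+,\Lambda^+] \subset N^{-1}\Lambda^+$. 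Iterating through depth $\le c$ yields $\Gamma_2 \subset (M')^{-1}\Lambda^+$ for some bounded $M'$, and then Proposition \ref{covol_admu} converts the chain $\cov_\g(\Lambda^+) \le \cov_G(\Gamma) \le \cov_\g(\Lambda^-) = [\Lambda^+:\Lambda^-]\cov_\g(\Lambda^+)$ together with $\cov_\g(\Gamma_2) \ge (M')^{-d}\cov_\g(\Lambda^+)$ into the bound $[\Gamma_2:\Gamma] = \cov_G(\Gamma)/\cov_\g(\Gamma_2) \le C_0(d)(M')^d$.

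The main obstacle is the uniformity in Case 2: one must verify that all denominators arising in the Segal--Malcev commensurability bound and in inverting the BCH relation depend only on $d$, not on the particular rational structure of $\Gamma$ inside $\g$. Once this is in place, the constant $C(d)$ in the lemma can be taken as an explicit polynomial expression in $M$, $M'$, $N$ and $C_0(d)$.
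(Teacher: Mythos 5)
Your two-stage strategy is reasonable, and stage 2 (passing from $\Gamma_2$ to $\Gamma_1 = n\Gamma_2$ via the normal core) is correct; the paper carries out an analogous rescaling $\Xi_d \rightsquigarrow n\Xi_d$. The problem lies in stage 1. In Case 1 the idea is sound (the strong subring generated by a Lie subring $\Gamma$ is contained in $N^{-1}\Gamma$ for some $N$ bounded in terms of $c\le d$), though your specific claim that the denominators divide $M=m_2\cdots m_c$ is asserted rather than proved, since iterated substitutions like $u_2(u_2(\cdots),\cdot)$ compound $m_2$ repeatedly and a priori need not divide $M$. But the real gap is Case 2, and you flag it yourself: you invoke a ``uniform form'' of Segal's theorem together with ``inverting the BCH formula'' to show $[\Lambda^+,\Lambda^+]\subset N^{-1}\Lambda^+$, and then state that ``one must verify that all denominators\ldots depend only on $d$.'' That verification is not a routine check---it is the entire content of the lemma in this direction, and as written the argument is circular: you are trying to produce a bounded-index strong overring, and you appeal to a bounded-index relationship between logarithms of group commutators and Lie brackets that you have not established and that, to my knowledge, does not appear in this uniform form in \cite{Seg} or \cite{corwin}. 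Without filling this in, the lemma is not proved in the lattice case.

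The paper avoids this entirely by working once and for all in the free $(d-1)$-step nilpotent Lie $\mathbf{Q}$-algebra $\g(d)$ on $d$ generators. One shows (using Lemma \ref{strsubgen}) that the strong subring $\Xi_d$ generated by $x_1,\dots,x_d$ is a finitely generated additive group, hence contains both the Lie $\mathbf{Z}$-subring $\Lambda_d$ and the BCH-group $\Gamma_d$ with finite index; this produces a single integer $n(d)$ with $n\Xi_d\subset\Lambda_d\cap\Gamma_d$. For an arbitrary $d$-dimensional $\g$ and an arbitrary $\Gamma$ (in either case), one chooses $d$ generators of $\Gamma$ and transports the situation along the simultaneous Lie-algebra/group homomorphism $f:\g(d)\to\g$; since strong subrings are preserved by $f$, the sandwich $f(n\Xi_d)\subset\Gamma\subset f(\Xi_d)$ with index $\le n(d)^d$ falls out. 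The uniformity in $d$ is automatic because the computation happens in the universal object, and the two cases (subring vs.\ lattice) are handled identically, which is precisely what your approach struggles to achieve. If you want to salvage your direct route, you would need to prove the uniform inverse-BCH statement with explicit $d$-dependent denominators; the free-object trick is exactly a clean way to package that argument.
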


\begin{proof}
Rather than a direct approach, we will rather use a general trick consisting in the idea that a non-quantitive statement for the free object mechanically provides a uniform statement. The counterpart is that the constants will not be explicit. 

Let $\g(d)$ be the free $(d-1)$-step nilpotent Lie $\mathbf{Q}$-algebra on the generators $x_1,\dots,x_d$. We also view it as a group through the Baker-Campbell-Hausdorff product. Then the multiplicative subgroup generated $x_1,\dots,x_d$ is free $(d-1)$-step nilpotent on the given generators; we denote it by $\Gamma_d$; the Lie subalgebra generated by $x_1,\dots,x_d$ is denoted by $\Lambda_d$. Then $(\Lambda_d,+)$ is a free abelian group whose rank is the $\mathbf{Q}$-dimension of $\g(d)$; in particular $\g(d)/\Lambda_d$ is a torsion group.

In $\g(d)$, the strong subring $\Xi_d$ generated by $\{x_1,\dots,x_d\}$ has a finitely generated underlying additive group, by Lemma \ref{strsubgen}.
It follows that $\Lambda_d$ has finite index in $\Xi_d$ for the additive law. Therefore, $\Xi_d$ is discrete in the real completion $\g(d)\otimes_\mathbf{Q}\mathbf{R}$ which in turn implies that the index of $\Gamma_d$ in $\Xi_d$, for the group law, is finite (because $\Gamma_d$ is a lattice for the group law).

From these two facts (one viewed in the Lie algebra side, one in the group side), we obtain that there exists $n=n(d)\ge 1$ such that for all $x\in\Xi_d$, $nx\in\Lambda_d$ and $x^n\in\Gamma_d$. But since $x^n=nx$, this just means that $n\Xi_d\subset \Lambda_d\cap\Gamma_d$; clearly $n\Xi_d$ is also a strong subring; its index in $\Xi_d$ as an additive subgroup is $n^d$; its index as a multiplicative subgroup is also $n^d$, by Proposition \ref{covol_admu}.

Now let $\g$ be an arbitrary real nilpotent Lie algebra, of dimension $d$; then it is $(d-1)$-step-nilpotent. First, let $\Gamma$ be a lattice for the group law (recall that this means that $\exp(\Gamma)$ is a lattice, but we keep our identification). Then $\Gamma$ being an iterated extension of $d$ cyclic groups, it has a generating family $(y_1,\dots,y_d)$ as a group. We can then map the free $(d-1)$-step nilpotent group into it mapping $x_d\mapsto y_d$, and this extends to a group homomorphism $f:(\g(d),\cdot)\to(\g,\cdot)$. Then this is also a Lie algebra homomorphism $\g(d)\to\g$. It follows that the image of any strong subring is a strong subring. So $f(n\Xi_d)\subset\Gamma\subset f(\Xi_d)$, and $[f(n\Xi_d):f(\Xi_d)]\le n(d)^d$.

The same argument can be performed with discrete cocompact subrings (exchanging the roles of groups and Lie rings), completing the proof of the lemma.
\end{proof}



\begin{thebibliography}{KM98b}

\bibitem[Bas]{Bas} H. Bass. The degree of polynomial growth of finitely generated nilpotent groups. Proc. London
Math. Soc. (3), 25 (1972) 603--614.

\bibitem[BorSpr]{BoS} A. Borel, T. Springer. Rationality properties of linear algebraic groups II, T\^ohoku Math. J. 20 (1968), 443--497. 

\bibitem[Bou1]{Bou} N. Bourbaki. \'El\'ements de math\'ematique. Groupes et alg\`ebres de Lie. Chapitre II: Alg\`ebres de Lie libres. Chapitre III: Groupes de Lie. Actualit\'es Scientifiques et Industrielles, No. 1349. Hermann, Paris, 1972.

\bibitem[Bou2]{BouA} N. Bourbaki. \'El\'ements de math\'ematique. Alg\`ebre. Chapitres 4 \`a 7. Masson, 1981.

\bibitem[BouCor]{BC} K. Bou-Rabee, Y. Cornulier. Systolic growth of linear groups. Proc. Amer. Math. Soc. 144 (2016), no. 2, 529--533.

\bibitem[BouStu]{BS} K. Bou-Rabee, D. Studenmund. Full residual finiteness growths of nilpotent groups. Israel J. Math. 214 (2016), no. 1, 209--233.

\bibitem[Bre]{Bre} E. Breuillard. Geometry of groups of polynomial growth and shape of large balls. Groups Geom. Dyn. 8 (2014), 1--64.

\bibitem[CoGr]{corwin} L. Corwin, F. Greenleaf. Representation of nilpotent Lie groups and their applications. Cambridge Studies in Adv. Math., Cambridge Univ. Press, 1990.


\bibitem[Cor]{gralie} Y. Cornulier. Gradings on Lie algebras, systolic growth, and cohopfian properties of nilpotent groups. Bull. Soc. Math. France 144(4) (2016) 693--744.






\bibitem[Gra]{graaf} W. de Graaf. Classification of 6-dimensional nilpotent Lie algebras over fields of characteristic not 2. Journal of Algebra 309 (2007) 640--653.


\bibitem[Gro1]{Gro81} M. Gromov. Groups of polynomial growth and expanding maps. 
Inst. Hautes \'Etudes Sci. Publ. Math. No. 53 (1981), 53--73.

\bibitem[Gro2]{Gro} M. Gromov. Systoles and intersystolic inequalities. Actes de la table ronde de g\'eom\'etrie diff\'erentielle (Luminy, 1992), 291--362, S\'emin. Congr., 1, Soc. Math. France, Paris, 1996.




\bibitem[Gui]{Guiv} Y. Guivarch. Croissance polynomiale
et p\'eriodes des fonctions harmoniques. Bull. Soc. Math. France
101 (1973) 333--379.

\bibitem[Jen]{Jen} J. Jenkins.
Growth of connected locally compact groups. 
J. Functional Analysis 12 (1973), 113--127. 



\bibitem[Los]{Los} V. Losert.
 On the structure of groups with polynomial growth.
 Math. Z. 195 (1987) 109--117.

\bibitem[Mag]{Mag} L. Magnin
Determination of 7-dimensional indecomposable nilpotent complex Lie algebras by adjoining a derivation to 6-dimensional Lie algebras.
Algebr. Represent. Theory 13 (2010), no. 6, 723--753.

\bibitem[Pan]{Pan} P. Pansu. Croissance des boules et des g\'eod\'esiques ferm\'ees dans les nilvari\'et\'es. Ergodic Theory Dynam. Systems 3 (1983), no. 3, 415--445.

\bibitem[Seg]{Seg} D. Segal. Polycyclic groups, Cambridge Tracts in Mathematics, Vol 82. Cambridge University Press, Cambridge, 1983.









\end{thebibliography}
\end{document}